\documentclass[11pt,a4paper]{amsart}
\usepackage{mathrsfs}
\usepackage{tikz-cd}
\usepackage{syntonly}
\usepackage{amsmath}
\usepackage{forest}
\usepackage{xcolor}
\usepackage{amsthm}
\usepackage{amsfonts}
\usepackage{amssymb}
\usepackage{latexsym}
\usepackage{bm}
\usepackage{mathtools}
\usepackage{amscd,amssymb,amsopn,amsmath,amsthm,graphics,amsfonts,mathrsfs,accents,enumerate,verbatim,calc}
\usetikzlibrary{nfold}
\usepackage[colorlinks=true,linkcolor=red,citecolor=blue]{hyperref}
\usepackage[all]{xy}
\usepackage{enumitem}

\date{}
\allowdisplaybreaks[4] \footskip=15pt
\renewcommand{\uppercasenonmath}[1]{}

\renewcommand{\thefootnote}{\fnsymbol{footnote}}

\numberwithin{equation}{section} \theoremstyle{plain}
\newtheorem{lem}{Lemma}[section]
\newtheorem{cor}[lem]{Corollary}
\newtheorem{prop}[lem]{Proposition}
\newtheorem{ex}[lem]{Example}
\newtheorem{thm}[lem]{Theorem}

\newtheorem{construction}[lem]{Construction}

\newtheorem{definition}[lem]{Definition}

\newtheorem{Ctex}[lem]{Counterexample}
\newtheorem{Quest}[lem]{Question}
\newtheorem{Property}[lem]{Property}
\newtheorem{Properties}[lem]{Properties}
\newtheorem{Subprops}{}[lem]
\newtheorem{Para}[lem]{}

\newtheorem{remark}[lem]{Remark}
\newtheorem{rem}[lem]{Remark}

\newtheorem*{ThA}{Theorem A}
\newtheorem*{ThB}{Theorem B}
\newtheorem*{ThC}{Theorem C}

\newtheorem*{ack*}{ACKNOWLEDGEMENTS}

\newcommand{\HomC}{\mathrm{Hom}_{\mC}}
\newcommand{\HomA}{\mathrm{Hom}_{\mA}}
\newcommand{\Tree}{\mathfrak T}
\newcommand{\modu}{\operatorname{mod}}
\newcommand{\add}{\operatorname{add}}
\newcommand{\mapsfrom}{\mathrel{\reflectbox{\ensuremath{\mapsto}}}}

\newcommand{\pf}{\noindent\begin {proof}}
\newcommand{\epf}{\end{proof}}

\newcommand{\X}{\mathcal{X}}

\newcommand{\mC}{\mathcal{C}}

\newcommand{\mA}{\mathcal{A}}
\newcommand{\B}{\mathcal{B}}
\newcommand{\mD}{\mathcal{D}}
\newcommand{\mX}{\mathcal{X}}
\newcommand{\mF}{\mathcal{F}}

\RequirePackage{tikz-cd}
\RequirePackage{amssymb}
\usetikzlibrary{calc,decorations.pathmorphing,spath3}

\tikzset{
	curve/.style={
		settings={#1},
		to path={
			(\tikztostart)
			.. controls
			($(\tikztostart)!\pv{pos}!(\tikztotarget)!\pv{height}!270:(\tikztotarget)$)
			and
			($(\tikztostart)!1-\pv{pos}!(\tikztotarget)!\pv{height}!270:(\tikztotarget)$)
			.. (\tikztotarget)\tikztonodes
		}
	},
	settings/.code={
		\tikzset{quiver/.cd,#1}
		\def\pv##1{\pgfkeysvalueof{/tikz/quiver/##1}}
	},
	quiver/.cd,
	pos/.initial=0.35,
	height/.initial=0
}

\tikzset{
	between/.style n args={2}{
		/tikz/execute at end to={
			\tikzset{
				spath/split at keep middle={current}{#1}{#2}
			}
		}
	}
}

\tikzset{
	tail reversed/.code={
		\pgfsetarrowsstart{tikzcd to}
	},
	2tail/.code={
		\pgfsetarrowsstart{Implies[reversed]}
	},
	2tail reversed/.code={
		\pgfsetarrowsstart{Implies}
	},
	no body/.style={
		/tikz/dash pattern=on 0 off 1mm
	}
}

\begin{document}
\begin{center}
{\Large  \bf Homology and higher-order intermediate  categories for proper abelian subcategories}
\end{center}
\medskip
\centerline{Zheng-Gang Wu \qquad Xianhui Fu\textsuperscript{*}}

\begingroup
\renewcommand{\thefootnote}{\fnsymbol{footnote}}
\footnotetext[1]{Corresponding author.}
\endgroup

\bigskip
\medskip
\centerline { \bf  Abstract}
\leftskip10truemm \rightskip10truemm \noindent
\noindent
Without needing a $t$-structure, we establish natural long exact homology sequences  on finite extension windows with $\ell+1$ layers
of extension-closed proper abelian subcategories,
assuming vanishing of negative self-extensions
in degrees $1,\ldots,\ell+1$,
and prove uniform sharpness of this bound. In the Krull-Schmidt setting, we classify intermediate subcategories
stable under taking pure homology layers by compatible sequences. 

\medskip

\noindent\textbf{2020 Mathematics Subject Classification.}
Primary 18G80; Secondary 18E10, 18E40.

\noindent
\textbf{Keywords:} proper abelian subcategory; long exact homology sequence; \(s\)-torsion pair; torsion-free class;
 intermediate subcategory; negative cluster category.

\leftskip0truemm
\rightskip0truemm
\setcounter{tocdepth}{1}
\tableofcontents

\section {Introduction}
%\bigskip
Proper abelian subcategories retain the compatibility between short exact sequences and distinguished triangles which is a characteristic of hearts of $t$-structures, but need not themselves be hearts; see
Jørgensen \cite{Jorgensen2022,Jorgensen2021}. This raises the question of how to construct homology and classify intermediate
subcategories without a $t$-structure.

Let $\mA$ be an extension-closed proper abelian subcategory in a triangulated category $\mC$. Set the condition $E_r$, which means $\mathrm{Hom}(\Sigma^i\mA, \mA)=0, 1\leq i\leq r$. Korteg{\aa}rd \cite{Kortegaard2025} established a six-term sequence on
$\Sigma\mathcal A*\mathcal A$ under Hom-vanishing condition $E_2$ and used it to classify
intermediate subcategories under an exchange hypothesis. For arbitrary finite extension windows $\mD_{\ell}:=\Sigma^\ell\mA*\cdots*\Sigma\mA*\mA$, his thesis
\cite[Project D, Theorem~4.5]{Kortegard2024} establishes a finite-window long exact
sequence under $E_{\ell+3}$. 

Our first main result proves the same long exact sequence under the strictly weaker condition $(E_{\ell+1})$, using layer homological functors $H_i$ constructed in Construction \ref{cons:canonical-tower}; thus the vanishings in degrees $(\ell+2)$ and $(\ell+3)$ are not required.

\begin{ThA}[\textit{=Theorem \ref{thm:main-long-exact}}]
	Let
	\begin{equation}
		\Delta=(X\xrightarrow fY\xrightarrow gZ\xrightarrow h\Sigma X)
	\end{equation}
	be a triangle with $X,Y,Z\in\mD_\ell$.  There are canonical connecting morphisms 
	\[
	\delta_i^\Delta:H_i(Z)\longrightarrow H_{i-1}(X)
	\qquad(1\leq i\leq\ell),
	\]
	for which 
	\begin{align*}
		0\longrightarrow H_\ell(X)&\longrightarrow H_\ell(Y)
		\longrightarrow H_\ell(Z)\xrightarrow{\delta_\ell^\Delta}
		H_{\ell-1}(X)\longrightarrow\cdots\\
		&\cdots\longrightarrow H_0(X)\longrightarrow H_0(Y)
		\longrightarrow H_0(Z)\longrightarrow0
	\end{align*}
	is exact in $\mA$.  Moreover, the sequence is natural with respect to
	morphisms of triangles.
\end{ThA}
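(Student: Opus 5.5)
The plan is an induction on $\ell$, using the tower description of objects of $\mD_\ell$ behind the layer functors $H_i$ of Construction~\ref{cons:canonical-tower}. First I would record the properties of the canonical tower under $(E_{\ell+1})$. Each $M\in\mD_\ell$ sits in a triangle
\[
\tau_{\geq 1}M\to M\to H_0(M)\to\Sigma\tau_{\geq1}M ,
\]
with $H_0(M)\in\mA$ and $\tau_{\geq1}M\in\Sigma\mD_{\ell-1}$, and $H_i(M)=H_{i-1}(\Sigma^{-1}\tau_{\geq1}M)$. The vanishing $\mathrm{Hom}(\Sigma^i\mA,\mA)=0$ for $1\le i\le\ell+1$ should give $\mathrm{Hom}(\Sigma\mD_{\ell-1},\mA)=0$. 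Together with the range $i\le \ell+1$, this should make $M\to H_0(M)$ universal among maps from $M$ into $\mA$, so $H_0$ is a functor $\mD_\ell\to\mA$ left adjoint to the inclusion. It also makes the tower functorial and unique up to unique isomorphism. I would check exactly where degree $\ell+1$ is used: in the Hom from the top layer $\Sigma^\ell\mA$ into $\Sigma^{-1}\mA$, which is needed for uniqueness of the lift. This is the step replacing Kortegård's use of $E_{\ell+2},E_{\ell+3}$.

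Next, the right end $H_0(X)\to H_0(Y)\to H_0(Z)\to 0$. Since $H_0$ is a left adjoint, I would show that $\mathrm{Hom}(H_0(-),A)=\mathrm{Hom}(-,A)$ turns $\Delta$ into a left exact sequence for every $A\in\mA$. For this, $\mathrm{Hom}(\Sigma X,A)\to\mathrm{Hom}(Z,A)$ must be zero on the relevant part, which uses $\mathrm{Hom}(\Sigma\mD_\ell,\mA)=0$, and that vanishing only needs degrees $1$ to $\ell+1$. Yoneda in the abelian category $\mA$ then gives right exactness. For the left end and the middle, I would apply the octahedral axiom to the composite $\tau_{\geq1}Y\to Y\to Z\to H_0(Z)$. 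This produces a triangle relating $\tau_{\geq1}X$, $\tau_{\geq1}Y$ and a cone $W$, together with a triangle
\[
K\to H_0(X)\to H_0(Y)\to\cdots
\]
in $\mA$ coming from the proper abelian structure: short exact sequences in $\mA$ are exactly the triangles with terms in $\mA$. Here I would split the exact sequence $H_0(X)\to H_0(Y)\to H_0(Z)\to0$ into short exact pieces using the kernel and the image. After desuspending, $W$ should lie in $\mD_{\ell-1}$, since extension-closure of $\mA$ handles the merged bottom layer. Applying the induction hypothesis to the resulting triangle in $\mD_{\ell-1}$ gives the upper part of the sequence. Splicing it with the image/kernel short exact sequences gives $\delta_1^\Delta$ and exactness at $H_1(Z)$, $H_0(X)$ and $H_0(Y)$. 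The other $\delta_i$ come from the inductive step. Injectivity of $H_\ell(X)\to H_\ell(Y)$ is the top-degree statement of the induction; in the base case $\ell=0$ it is the fact that triangles in $\mA$ are short exact.

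The main obstacle is the middle splicing step. The cone $W$ lives a priori only in $\Sigma^\ell\mA*\cdots*\Sigma\mA*\Sigma\mA$, and one must show it has the right shape, namely $\Sigma\mD_{\ell-1}$ up to an $\mA$-correction measured by $\ker(H_0(X)\to H_0(Y))$. The homology of this correction term must then be identified with the $H_i$ of $\tau_{\geq1}X$, $\tau_{\geq1}Y$, $\tau_{\geq1}Z$. I would first try to handle this with a $3\times3$ lemma built from the canonical triangles of $X$, $Y$ and $Z$, using $E_{\ell+1}$ to make all the comparison maps unique. Finally, naturality with respect to morphisms of triangles follows because every construction (tower, octahedra choices up to unique isomorphism, and connecting maps) is determined by the universal properties. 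I would verify that the connecting morphism is independent of the choice in the octahedral axiom using the same Hom-vanishing.
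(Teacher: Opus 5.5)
\textbf{Route.} Your route differs from the paper's. You peel off the bottom layer, using that $H_0$ is left adjoint to $\mA\hookrightarrow\mD_\ell$; your right-exactness argument via $\mathrm{Hom}_{\mC}(\Sigma\mD_\ell,\mA)=0$ is correct. The paper instead peels off the top layer, using the representability $\mathrm{Hom}_{\mA}(a,H_r(X))\cong\mathrm{Hom}_{\mC}(\Sigma^ra,X)$ of Lemma~\ref{lem:top-representability}, and then needs the relative snake lemma, Proposition~\ref{prop:relative-snake}.

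\textbf{Exactness.} The exactness half of your plan can be completed. However, the step you flag as the main obstacle is left open, and ``extension-closure of $\mA$'' is not what settles it. What works is the following.
In the $3\times3$ diagram built on the bottom-layer triangles of $X$ and $Y$, let $C$ be the cone of $H_0(f)$ and $K=\mathrm{Ker}\,H_0(f)$. Use the triangle $\Sigma K\to C\to\mathrm{Coker}\,H_0(f)\to\Sigma^2K$, and identify the composite $Z\to C\to\mathrm{Coker}\,H_0(f)\cong H_0(Z)$ with the unit $Z\to H_0(Z)$ by the universal property of $H_0$. An octahedron then gives a triangle $W\to\tau_{\geq1}Z\to\Sigma K\to\Sigma W$. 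Hence $W\in\mA*\mD_{[\ell,1]}\subseteq\mD_\ell$.
Since $\mathrm{Hom}_{\mC}(\tau_{\geq1}Y,\mA)=0=\mathrm{Hom}_{\mC}(\Sigma\tau_{\geq1}X,\mA)$ (the second is where degree $\ell+1$ enters), you get $\mathrm{Hom}_{\mC}(W,\mA)=0$. So $H_0(W)=0$ and $W\in\Sigma\mD_{\ell-1}$.
The $K$-correction is absorbed by applying the induction hypothesis to the second triangle $W\to\tau_{\geq1}Z\to\Sigma K\to\Sigma W$, which also lies in $\Sigma\mD_{\ell-1}$. It gives $H_i(W)\cong H_i(Z)$ for $i\geq2$ and $0\to H_1(W)\to H_1(Z)\to K\to0$.
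The composite $\tau_{\geq1}Y\to W\to\tau_{\geq1}Z$ equals $\tau_{\geq1}g$ by uniqueness of lifts. Splicing with the sequence of $\tau_{\geq1}X\to\tau_{\geq1}Y\to W$ and with $0\to K\to H_0(X)\to H_0(Y)\to H_0(Z)\to0$ then yields exactness, so your route needs no separate relative snake lemma.

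\textbf{The genuine gap: canonicity and naturality of the connecting maps.} Your justification, that everything is ``determined by universal properties'' with ``octahedra choices up to unique isomorphism'', is false here. In your construction, $\delta_i$ for $i\geq2$ is the boundary of the auxiliary triangle transported along $H_i(W)\cong H_i(Z)$, and $\delta_1$ is $H_1(Z)\twoheadrightarrow K\hookrightarrow H_0(X)$. Both depend on the chosen $3\times3$ diagram and octahedron.
Comparison maps between two choices of $W$ are unique only modulo maps factoring through $W\to\Sigma\tau_{\geq1}X$. $(E_{\ell+1})$ does not kill $\mathrm{Hom}_{\mC}(\Sigma\tau_{\geq1}X,\Sigma\mD_{\ell-1})$; for $\ell\geq2$ it contains $\mathrm{Hom}_{\mC}(\Sigma^2\mA,\Sigma^2\mA)$.
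Likewise, a morphism of triangles $\Delta\to\Delta'$ gives some $W\to W'$ by (TR3), but it is compatible with the maps into $\tau_{\geq1}Z$ and $\tau_{\geq1}Z'$ only up to such a term. So neither independence of choices nor naturality follows, and you cannot feed a morphism of auxiliary triangles to the induction hypothesis.

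\textbf{Missing idea.} You need a choice-free definition of $\delta_i$ from $h$ alone, as in Lemma~\ref{lem:cross-window}: lift $h$ uniquely to the block pieces $U_i(Z)\to\Sigma U_{i-1}(X)$ and take the unique induced map on pure layers, with all uniqueness coming from $(E_{\ell+1})$. Naturality (Proposition~\ref{prop:boundary-naturality}) is then formal. The inductive step must show that the auxiliary boundaries coincide with these intrinsic maps, as the paper does in \eqref{eq:general-boundary-conjugacy}.
This is feasible in your setting. Write $e_1^Z:\tau_{\geq1}Z\to Z$, $p_0^X:X\to H_0(X)$ and $\beta_1^Z:\tau_{\geq1}Z\to\Sigma H_1(Z)$. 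Since maps $\tau_{\geq1}Z\to\Sigma\mA$ factor uniquely through $\beta_1^Z$, $\delta_1$ is characterized by $(\Sigma\delta_1)\beta_1^Z=(\Sigma p_0^X)\,h\,e_1^Z$. Your splice satisfies this by commutativity of the $3\times3$ diagram, up to the standard sign. For $i\geq2$ you need the analogous comparison via the square formed by $W\to Z$ and $\Sigma\tau_{\geq1}X\to\Sigma X$, and it has to be built into the induction hypothesis.
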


For $\ell=1$, this recovers Korteg{\aa}rd's original six-term sequence \cite[Lemma 3.3]{Kortegaard2025}.
Proposition~\ref{prop:sharp} shows that the bound $(E_{\ell+1})$ is
uniformly sharp: the weaker condition $(E_\ell)$ is not sufficient
in general.

We continue to assume $(E_{\ell+1})$ and now suppose that
$\mathcal C$ is Krull--Schmidt. 

If the highest-layer exchange equality holds, we give the classification of $\ell$-layer intermediate subcategories using torsion-free classes in $\mA$. This generalizes the classification of intermediate subcategories in \cite[Corollary 4.4]{Kortegaard2025}.

\begin{ThB}[\textit{=Theorem \ref{thm 6.3}}]
		Assume the highest-layer exchange equality
	$$\Sigma^\ell\mA*\mD_{\ell-1}=\mD_{\ell-1}*\Sigma^\ell\mA.$$
	Then there is a bijection
\[
\begin{array}{r@{\;}c@{\;}l}
	\left\{
	\begin{array}{c}
		\ell\text{-layer intermediate}\\
		\text{subcategories } \mathcal X
	\end{array}
	\right\}
	&
	\overset{1:1}{\longleftrightarrow}
	&
	\left\{
	\begin{array}{c}
		\text{torsion-free classes}\\
		\mathcal F\subseteq \mathcal A
	\end{array}
	\right\}
	\\[1.6em]
	\mathcal X
	&
	\mapsto
	&
	H_\ell(\mathcal X)
	\\[0.6em]
	\Sigma^\ell\mathcal F * \mathcal D_{\ell-1}
	&
	\mapsfrom
	&
	\mathcal F
\end{array}
\]
Moreover, this bijection is an isomorphism of posets with respect to inclusion.
\end{ThB}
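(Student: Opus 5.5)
Throughout I use the definition from the body of the paper: an $\ell$-layer intermediate subcategory is an extension-closed subcategory $\mX$ with $\mD_{\ell-1}\subseteq\mX\subseteq\mD_\ell$, closed under direct summands, and stable under pure homology layers, i.e.\ $X\in\mX$ implies $\Sigma^\ell H_\ell(X)\in\mX$. If the paper's definition differs, the argument has to be adjusted at exactly the places where these properties are invoked.

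I will use three facts.
\begin{enumerate}
\item[(a)] $H_\ell$ vanishes on $\mD_{\ell-1}$.
\item[(b)] $H_\ell(\Sigma^\ell M)\cong M$ for $M\in\mA$.
\item[(c)] Every $X\in\mD_\ell$ sits in a canonical triangle $\Sigma^\ell H_\ell(X)\to X\to X'\to\Sigma^{\ell+1}H_\ell(X)$ with $X'\in\mD_{\ell-1}$.
\end{enumerate}
These should be available from Construction~\ref{cons:canonical-tower}. In particular (c) gives $\mD_\ell=\Sigma^\ell H_\ell(\mD_\ell)*\mD_{\ell-1}$. The analytic input for everything else is the exactness of Theorem~\ref{thm:main-long-exact} in its top degrees.

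\textbf{Step 1: $H_\ell(\mX)$ is a torsion-free class.} Put $\mF=H_\ell(\mX)$. By stability, $M\in\mF$ if and only if $\Sigma^\ell M\in\mX$.
\begin{itemize}
\item Extensions: a short exact sequence $0\to F_1\to F\to F_2\to0$ in $\mA$ gives a triangle, so $\Sigma^\ell F\in\Sigma^\ell F_1*\Sigma^\ell F_2\subseteq\mX$.
\item Subobjects: for $F'\hookrightarrow F$ with cokernel $F''$, rotating the shifted triangle gives $\Sigma^{\ell-1}F''\to\Sigma^\ell F'\to\Sigma^\ell F\to\Sigma^\ell F''$. Hence $\Sigma^\ell F'\in\mD_{\ell-1}*\mX\subseteq\mX*\mX\subseteq\mX$, and by (b) $F'\in\mF$.
\item Summands: closure under summands passes to $\mF$, using additivity of $H_\ell$ and Krull--Schmidt.
\end{itemize}
This direction does not need the exchange equality.

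\textbf{Step 2: the inverse map.} For a torsion-free class $\mF$ put $\mX_\mF=\Sigma^\ell\mF*\mD_{\ell-1}$. Clearly $\mD_{\ell-1}\subseteq\mX_\mF\subseteq\mD_\ell$.

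First, $H_\ell$ on $\mX_\mF$. For a triangle $\Sigma^\ell F\to X\to D\to$ with $D\in\mD_{\ell-1}$, Theorem~\ref{thm:main-long-exact} together with (a) gives $0\to F\to H_\ell(X)\to H_\ell(D)=0$. Hence $H_\ell(X)\cong F$, so $H_\ell(\mX_\mF)=\mF$. This yields both stability of $\mX_\mF$ and the composite $\mF\mapsto\mX_\mF\mapsto\mF$ being the identity.

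The main obstacle is extension-closure of $\mX_\mF$. Since $*$ is associative and $\mD_{\ell-1}$ and $\Sigma^\ell\mF$ are extension-closed, it suffices to show $\mD_{\ell-1}*\Sigma^\ell\mF\subseteq\Sigma^\ell\mF*\mD_{\ell-1}$. Take $D\to E\to\Sigma^\ell F\to\Sigma D$.
\begin{itemize}
\item The exchange equality gives $E\in\Sigma^\ell\mA*\mD_{\ell-1}\subseteq\mD_\ell$.
\item Theorem~\ref{thm:main-long-exact} gives $0=H_\ell(D)\to H_\ell(E)\to F$ exact, so $H_\ell(E)$ is a subobject of $F$ and lies in $\mF$.
\item The canonical triangle (c) then places $E$ in $\Sigma^\ell\mF*\mD_{\ell-1}$.
\end{itemize}
Here the exchange hypothesis is what guarantees $E\in\mD_\ell$, so that $H_\ell(E)$ is defined and (c) applies. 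Summand-closure of $\mX_\mF$ I would get from Krull--Schmidt: $H_\ell$ of a summand is a summand of an object of $\mF$, and (c) applied to the summand puts it back in $\mX_\mF$.

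\textbf{Step 3: bijection and order.} For intermediate $\mX$ with $\mF=H_\ell(\mX)$, (c) gives $\mX\subseteq\Sigma^\ell\mF*\mD_{\ell-1}$. Conversely, $\Sigma^\ell\mF\subseteq\mX$ by stability and $\mD_{\ell-1}\subseteq\mX$, so extension-closure gives equality. Both maps are visibly inclusion-preserving: $\mX\subseteq\mX'$ implies $H_\ell(\mX)\subseteq H_\ell(\mX')$, and $\mF\subseteq\mF'$ implies $\mX_\mF\subseteq\mX_{\mF'}$. Hence the bijection is an isomorphism of posets.
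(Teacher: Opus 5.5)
Your proposal has a genuine gap. It rests on a definition the paper does not use. In the paper, an $\ell$-layer intermediate subcategory only has to satisfy $\mD_{\ell-1}\subseteq\mX\subseteq\mD_\ell$ and be closed under extensions and direct summands. Stability under $x\mapsto\Sigma^\ell H_\ell(x)$ is \emph{not} part of the definition. It is a theorem, Lemma~\ref{lem 5.4}, and it is exactly where the exchange equality enters: the equality gives $\mX\subseteq\mD_\ell=\Sigma^\ell\mA*\mD_{\ell-1}=\mD_{\ell-1}*\Sigma^\ell\mA$, which is the hypothesis of that lemma.

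Without this lemma, Step~1 fails. To show $H_\ell(\mX)$ is closed under extensions and subobjects, you need $F\cong H_\ell(x)$ with $x\in\mX$ to imply $\Sigma^\ell F\in\mX$. Step~3 uses the same fact to get $\Sigma^\ell H_\ell(\mX)\subseteq\mX$.

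A symptom is that your argument never genuinely uses the hypothesis of the theorem. The one place you invoke it is to get $E\in\mD_\ell$ for $E\in\mD_{\ell-1}*\Sigma^\ell\mF$, and that is automatic because $\mD_\ell$ is extension closed (Proposition~\ref{prop 3.3}). What you have actually proved is the stable version, essentially Corollary~\ref{cor:top}, which needs no exchange hypothesis.

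The missing idea is the argument of Lemma~\ref{lem 5.4}:
\begin{itemize}
\item For $x\in\mX$, the exchange equality gives a triangle $b\xrightarrow{\alpha}x\to\Sigma^\ell a\to\Sigma b$ with $b\in\mD_{\ell-1}$ and $a\in\mA$.
\item Compare it with the canonical top triangle $\Sigma^\ell H_\ell(x)\to x\xrightarrow{\psi}r\to\Sigma^{\ell+1}H_\ell(x)$, where $r\in\mD_{\ell-1}$. Complete $\psi\alpha$ to a triangle $m\to b\xrightarrow{\psi\alpha}r\to\Sigma m$.
\item The octahedral axiom puts $m\in\Sigma^{\ell-1}\mA*\Sigma^\ell\mA\subseteq\mD_\ell$, using Lemma~\ref{lem 5.1}.
\item The top of the long exact sequence for $m\to b\to r$ gives $H_\ell(m)\hookrightarrow H_\ell(b)=0$. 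Hence $m\in\mD_{\ell-1}\subseteq\mX$.
\item The homotopy-pullback triangle $m\to b\oplus\Sigma^\ell H_\ell(x)\to x\to\Sigma m$ of Lemma~\ref{lem 5.2} gives $b\oplus\Sigma^\ell H_\ell(x)\in\mX$ by extension closure. Summand closure then gives $\Sigma^\ell H_\ell(x)\in\mX$.
\end{itemize}
With this lemma inserted, your Steps~1 and~3 go through as written.

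Two smaller remarks on Step~2. Your extension-closure argument for $\mX_\mF$, via $\mD_{\ell-1}*\Sigma^\ell\mF\subseteq\Sigma^\ell\mF*\mD_{\ell-1}$ and the injection $H_\ell(E)\hookrightarrow F$, is correct and a valid alternative to the paper's. Your summand-closure argument for $\mX_\mF$ is incomplete, however. Krull--Schmidt alone does not place a summand of an object of $\mX_\mF$ in $\mD_\ell$, which you need before $H_\ell$ and fact (c) apply. The paper closes this with $\mathrm{Hom}_{\mC}(\Sigma^\ell\mF,\mD_{\ell-1})=0$ and the Iyama--Yoshino criterion quoted in Proposition~\ref{prop 6.1}.
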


We
classify subcategories $\mA\subseteq \mX \subseteq \mD_\ell$ closed under extensions, direct summands,
and taking pure homology layers, using compatible sequences of subcategories of $\mA$ in the sense of Definition \ref{compatible}.

\begin{ThC}[\textit{=Theorem \ref{thm:layers}}]
	There is an isomorphism of posets between compatible sequences, ordered
	componentwise by inclusion, and subcategories
	$\mA\subseteq\X\subseteq \mD_\ell$ closed under extensions, direct summands,
	and taking pure homology layers. The maps are
	\begin{align*}
		(\B_0,\ldots,\B_\ell)&\longmapsto
		\X_{\B}:=\{x\in \mD_\ell\mid H_i(x)\in\B_i
		\text{ for all }i\},\\
		\X&\longmapsto
		\bigl(\{a\in\mA\mid\Sigma^ia\in\X\}\bigr)_{i=0}^{\ell}.
	\end{align*}
	Moreover,
	\[
	\X_{\B}=\Sigma^\ell\B_\ell*\cdots*\Sigma\B_1*\B_0.
	\]
\end{ThC}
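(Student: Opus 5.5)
The statement to prove here is Theorem C. The definition of a compatible sequence (Definition \ref{compatible}) comes later in the paper, so I will take it to mean: a sequence $(\B_0,\ldots,\B_\ell)$ of subcategories of $\mA$, each closed under extensions and direct summands and containing $0$, with $\B_0=\mA$, and satisfying whatever exchange-type conditions make $\Sigma^\ell\B_\ell*\cdots*\B_0$ closed under extensions. Throughout I use the layer functors $H_i$ of Construction \ref{cons:canonical-tower} and the long exact sequence of Theorem \ref{thm:main-long-exact}, together with Krull--Schmidt. The plan has three steps.

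\textbf{Step 1: the canonical tower decomposes objects.} For $x\in\mD_\ell$ I first record that the canonical tower
\[
0=x_{\ell+1}\to x_\ell\to\cdots\to x_0=x
\]
has cones $\Sigma^iH_i(x)$. Hence
\[
x\in\Sigma^\ell\{H_\ell(x)\}*\cdots*\Sigma\{H_1(x)\}*\{H_0(x)\}.
\]
This immediately gives $\X_\B\subseteq\Sigma^\ell\B_\ell*\cdots*\B_0$.

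\textbf{Step 2: the reverse inclusion.} This is the key homological input. Take an iterated extension built from pieces $\Sigma^ib_i$ with $b_i\in\B_i$. I would show by induction on the number of layers that its homology satisfies $H_i\in\B_i$. For a triangle $X\to Y\to Z\to\Sigma X$ in $\mD_\ell$, Theorem A shows that $H_i(Y)$ is an extension of a subobject of $H_i(Z)$ by a quotient of $H_i(X)$. Here the subobject is the kernel of $\delta_i$ and the quotient is the cokernel of $\delta_{i+1}$.

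For a triangle whose outer terms are concentrated in distinct degrees, the connecting maps are controlled by vanishing: $\delta$ maps into a lower layer, and I expect it to vanish when $X$ is a pure layer placed above, using $(E_{\ell+1})$. Building the filtration from the top layer down, I expect the connecting maps to vanish, so $H_i(Y)$ is an extension of $H_i(X)$ and $H_i(Z)$. Extension-closure of $\B_i$ then finishes. The same computation shows that $\X_\B$ is closed under extensions whenever the compatibility condition forces the kernel and cokernel terms to stay inside $\B_i$. This is precisely where the definition of compatibility will be used. I expect this step to be the main obstacle: I would first try to prove that the image of $\delta_{i+1}$ and the kernel of $\delta_i$ lie in the appropriate class using the compatibility axioms. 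Closure of $\X_\B$ under direct summands follows from additivity of $H_i$ and summand-closure of $\B_i$. Closure under pure layers is tautological, since $\Sigma^iH_i(x)$ has homology $H_i(x)$ in degree $i$ and $0$ elsewhere. Finally, $\mA\subseteq\X_\B$ because $\B_0=\mA$.

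\textbf{Step 3: the two maps are mutually inverse.} Given $\X$, set $\B_i=\{a\in\mA\mid\Sigma^ia\in\X\}$.
\begin{itemize}
\item Each $\B_i$ is closed under extensions and summands, because $\X$ is and $\Sigma^i$ preserves triangles. Checking the compatibility conditions should reduce to extension-closure of $\X$.
\item $\X\subseteq\X_\B$: for $x\in\X$, pure-layer closure gives $\Sigma^iH_i(x)\in\X$, so $H_i(x)\in\B_i$.
\item $\X_\B\subseteq\X$: by Step 1, any object of $\X_\B$ is an iterated extension of $\Sigma^iH_i(x)\in\X$.
\end{itemize}
Conversely, starting from $\B$, I compute $\{a\mid\Sigma^ia\in\X_\B\}$. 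Since $H_j(\Sigma^ia)$ equals $a$ for $j=i$ and $0$ otherwise, this set is exactly $\B_i$.

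Both maps are evidently monotone, so the bijection is an isomorphism of posets.
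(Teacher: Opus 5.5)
There is a genuine gap, and it is the compatibility condition itself. Definition \ref{compatible} is a condition intrinsic to $\mA$, not an exchange condition. Besides $\B_0=\mA$ and extension and summand closure of each $\B_i$, it requires the kernel--cokernel condition (KC): for $1\le i\le\ell$ and every $f\colon a\to b$ with $a\in\B_i$, $b\in\B_{i-1}$, one has $\mathrm{Ker} f\in\B_i$ and $\mathrm{Coker} f\in\B_{i-1}$. Your placeholder (``whatever conditions make $\Sigma^\ell\B_\ell*\cdots*\B_0$ extension closed'') has two readings. Taken literally, it assumes extension closure of $\X_\B$, so your argument proves only a tautological reformulation. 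Otherwise it leaves that closure as the ``main obstacle'', which you do not resolve. With (KC) the step is short. In $0\to\mathrm{Coker}\,\delta_{i+1}\to H_i(Y)\to\mathrm{Ker}\,\delta_i\to0$, it is the \emph{cokernel} of $\delta_{i+1}$, not its image, that must lie in $\B_i$. The map $\delta_{i+1}\colon H_{i+1}(Z)\to H_i(X)$ runs from $\B_{i+1}$ to $\B_i$, and $\delta_i\colon H_i(Z)\to H_{i-1}(X)$ runs from $\B_i$ to $\B_{i-1}$. So (KC) at $i+1$ and at $i$ put both outer terms in $\B_i$, and extension closure of $\B_i$ gives $H_i(Y)\in\B_i$. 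The endpoint cases are trivial.

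The converse direction has a second gap: for $\B_i=\{a\mid\Sigma^ia\in\X\}$, (KC) does not ``reduce to extension-closure of $\X$''. Take $f\colon a\to b$ with $a\in\B_i$, $b\in\B_{i-1}$, and complete $\Sigma^if$ to a triangle $\Sigma^{i-1}b\to y\to\Sigma^ia\xrightarrow{\Sigma^if}\Sigma^ib$. Extension closure gives $y\in\X$. The long exact sequence gives $0\to H_i(y)\to a\xrightarrow{\pm f}b\to H_{i-1}(y)\to0$; this uses that the cross-window boundary between pure objects is the shifted map up to sign. Hence $H_i(y)\cong\mathrm{Ker} f$ and $H_{i-1}(y)\cong\mathrm{Coker} f$. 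Only stability under pure homology layers then puts $\Sigma^i\mathrm{Ker} f$ and $\Sigma^{i-1}\mathrm{Coker} f$ into $\X$. That hypothesis is essential here, and your sketch does not use it at this point.

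The rest of your plan is sound. Step 1 and the reconstruction $\X=\X_\B$ are the paper's argument. For summand closure, also note that $\mD_\ell$ itself is summand closed. Your top-down computation of $\Sigma^\ell\B_\ell*\cdots*\B_0\subseteq\X_\B$ is correct and needs no compatibility at all. Each triangle there is the canonical top-layer decomposition, so the connecting maps vanish for degree reasons, not by $(E_{\ell+1})$. The paper obtains this inclusion differently, from $\Sigma^i\B_i\subseteq\X_\B$ together with extension closure of $\X_\B$.
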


Sections~2--4 develop the homology construction and prove the exactness.
Sections~5--7 establish the classification results. Section~8
treats hearts, negative cluster categories, and non-semisimple
periodic derived orbit examples with nonzero connecting morphisms.

\section {Preliminaries}
%\bigskip

Throughout, all subcategories are assumed to be full, replete, and
additive. Let $\mathcal C$ be a triangulated category with suspension
functor $\Sigma$. For subcategories $\mathcal X,\mathcal Y\subseteq
\mathcal C$, write
\[
\mathcal X*\mathcal Y
=
\left\{
z\in\mathcal C\ \middle|\
\text{there is a triangle }
x\longrightarrow z\longrightarrow y\longrightarrow\Sigma x,
\ x\in\mathcal X,\ y\in\mathcal Y
\right\}.
\]
The operation $*$ is associative. A subcategory $\mathcal Z$ is
extension closed if $\mathcal Z*\mathcal Z\subseteq\mathcal Z$.

According to \cite[Example~2.4]{Adachi2023} and \cite{NP}, every extension-closed subcategory $\mathcal Z\subseteq\mathcal C$
inherits the restricted extriangulated structure with the negative first
extension
\[
\mathbb E^{-1}(x,y):=\operatorname{Hom}_{\mathcal C}
(x,\Sigma^{-1}y),
\qquad x,y\in\mathcal Z.
\]
We use only the following specialization of the theory of
$s$-torsion pairs from~\cite{Adachi2023}.

\begin{definition}[{\cite[Definition~3.1]{Adachi2023}}]
	Let $\mathcal Z\subseteq\mathcal C$ be extension closed. A pair
	$(\mathcal T,\mathcal F)$ of subcategories of $\mathcal Z$ is an
	$s$-torsion pair in $\mathcal Z$ if
	\[
	\mathcal Z=\mathcal T*\mathcal F,\qquad
	\operatorname{Hom}_{\mathcal C}(\mathcal T,\mathcal F)=0,\qquad
	\operatorname{Hom}_{\mathcal C}
	(\mathcal T,\Sigma^{-1}\mathcal F)=0.
	\]
\end{definition}

\begin{prop}[{\cite[Proposition~3.7]{Adachi2023}}]
	Let $(\mathcal T,\mathcal F)$ be an $s$-torsion pair in
	$\mathcal Z$. For every $z\in\mathcal Z$, there is a triangle
	\[
	Tz\xrightarrow{\iota_z}z\xrightarrow{\pi_z}Fz
	\longrightarrow\Sigma Tz,
	\qquad Tz\in\mathcal T,\quad Fz\in\mathcal F,
	\]
	which is unique up to isomorphism. Moreover, the assignments
	$z\mapsto Tz$ and $z\mapsto Fz$ define additive functors
	\[
	T\colon\mathcal Z\longrightarrow\mathcal T,
	\qquad
	F\colon\mathcal Z\longrightarrow\mathcal F,
	\]
	where $T$ is right adjoint to the inclusion
	$\mathcal T\hookrightarrow\mathcal Z$ and $F$ is left adjoint to the
	inclusion $\mathcal F\hookrightarrow\mathcal Z$.
\end{prop}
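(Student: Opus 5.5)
We prove Proposition~3.7 of \cite{Adachi2023} in the specialization stated, using only the three axioms of an $s$-torsion pair. The key observation is the following vanishing statement. For $t\in\mathcal T$ and $f\in\mathcal F$ we have $\operatorname{Hom}(t,f)=0$ and $\operatorname{Hom}(t,\Sigma^{-1}f)=0$. Applying $\operatorname{Hom}(t,-)$ to a triangle $Tz\to z\to Fz\to\Sigma Tz$ then gives exactness of
\[
\operatorname{Hom}(t,\Sigma^{-1}Fz)\to\operatorname{Hom}(t,Tz)\to\operatorname{Hom}(t,z)\to\operatorname{Hom}(t,Fz).
\]
The outer terms vanish, so $\iota_z\circ-\colon\operatorname{Hom}(t,Tz)\to\operatorname{Hom}(t,z)$ is bijective.

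Dually, for $f\in\mathcal F$ we apply $\operatorname{Hom}(-,f)$. This gives exactness of
\[
\operatorname{Hom}(\Sigma Tz,f)\to\operatorname{Hom}(Fz,f)\to\operatorname{Hom}(z,f)\to\operatorname{Hom}(Tz,f).
\]
Here $\operatorname{Hom}(\Sigma Tz,f)\cong\operatorname{Hom}(Tz,\Sigma^{-1}f)=0$ and $\operatorname{Hom}(Tz,f)=0$. Hence $-\circ\pi_z$ is bijective.

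Existence of the triangle is immediate from $\mathcal Z=\mathcal T*\mathcal F$: for each $z$ we choose such a triangle. The two bijections show that $\iota_z$ is a right $\mathcal T$-approximation through which every map from $\mathcal T$ factors uniquely. In other words, $(Tz,\iota_z)$ represents the functor $\operatorname{Hom}(-,z)|_{\mathcal T}$. By Yoneda, $(Tz,\iota_z)$ is unique up to unique isomorphism. Similarly $(Fz,\pi_z)$ represents $\operatorname{Hom}(z,-)|_{\mathcal F}$ and is unique up to unique isomorphism.

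Uniqueness of the whole triangle up to isomorphism then follows. Suppose $T'\to z\to F'\to\Sigma T'$ is another such triangle. The unique isomorphisms $Tz\cong T'$ and $Fz\cong F'$ over and under $z$ complete to a morphism of triangles. Concretely, one can complete $(\alpha,\mathrm{id}_z)$ by (TR3). The third component obtained this way satisfies the defining factorization through $\pi$, so it equals the unique isomorphism $Fz\cong F'$. Hence the two triangles are isomorphic.

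Functoriality uses the same bijections. Given $u\colon z\to z'$, the composite $u\iota_z\colon Tz\to z'$ lies in $\operatorname{Hom}(Tz,z')$ with $Tz\in\mathcal T$. So it factors uniquely as $\iota_{z'}\circ Tu$, and this defines $Tu$. Uniqueness of the factorization gives $T(vu)=Tv\,Tu$, $T(\mathrm{id})=\mathrm{id}$ and additivity, since sums are preserved by the bijection. The functor $F$ is defined dually via $\pi_{z'}u=Fu\circ\pi_z$. The adjunctions are exactly the natural bijections already established:
\[
\operatorname{Hom}_{\mathcal T}(t,Tz)\cong\operatorname{Hom}_{\mathcal Z}(t,z),\qquad
\operatorname{Hom}_{\mathcal F}(Fz,f)\cong\operatorname{Hom}_{\mathcal Z}(z,f).
\]
Naturality in $t$ is clear. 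Naturality in $z$ holds by the definition of $Tu$, and in the second bijection by the definition of $Fu$.

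The only delicate point is the vanishing of both outer terms. The term $\operatorname{Hom}(t,\Sigma^{-1}Fz)$ is exactly where the third axiom $\operatorname{Hom}(\mathcal T,\Sigma^{-1}\mathcal F)=0$ enters; an ordinary torsion pair would give only surjectivity, not injectivity. Once this is checked, everything else is formal Yoneda reasoning. The apparent choice of triangles is harmless, because the resulting functors are well defined up to unique natural isomorphism.
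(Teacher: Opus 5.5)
Your proof is correct and complete. The two long exact Hom-sequences, with their outer terms killed by $\mathrm{Hom}(\mathcal T,\mathcal F)=0$ and $\mathrm{Hom}(\mathcal T,\Sigma^{-1}\mathcal F)=0$, make $\iota_z\circ-$ and $-\circ\pi_z$ bijective. Uniqueness of the triangle, functoriality, additivity and both adjunctions then follow formally, exactly as you describe.

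The paper gives no proof of its own here; it only cites the result from Adachi et al., so there is no argument of the paper's to compare against. Your argument is the standard one. It actually yields the stronger statement that $(Tz,\iota_z)$ and $(Fz,\pi_z)$ are unique up to \emph{unique} isomorphism compatible with $\iota_z$ and $\pi_z$. That is the form the paper implicitly relies on in Section~3, whenever it invokes ``uniqueness'' to produce canonical natural identifications of block decompositions and layer functors.
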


The uniqueness and functoriality in this proposition are the only
general properties of $s$-torsion pairs needed below.

\begin{definition}[{\cite[Definition 2.2 (ii)]{Jorgensen2022}}]
		Let $\mA\subseteq\mC$ be an additive subcategory. $\mA$ is a {\it proper abelian subcategory} if it is abelian in such a way that $a \overset{\alpha}{\hookrightarrow} a' \overset{\alpha'}{\twoheadrightarrow} a''$ is a short exact sequence in $\mA$ if and only if there is a triangle $a\xrightarrow{\alpha} a'\xrightarrow{\alpha'}a''\to \Sigma a$ in $\mC$.
\end{definition}

\begin{ex}[{\cite[Example 5.2] {Kortegaard2025}}, \cite{Beilinson2018}]
		The heart of a $t$-structure in a triangulated category is a proper abelian subcategory. But the
		converse need not hold.
\end{ex}

We state the following result in homological notation.

\begin{prop}[{\cite[Lemma 3.3] {Kortegaard2025}}]\label{prop 2.19} 
	Let $\mA$ be an extension-closed proper abelian subcategory in $\mC$. Assume that $\mC(\Sigma^{i}\mA, \mA)=0$, for $i=1, 2$. Then there exist two functors $H_1, H_0: \Sigma\mA*\mA\to \mA$ satisfying the following. Suppose 
	$$\xymatrix{
		c \ar[r]^{f} & c'\ar[r]^{g} & c'' \ar[r] & \Sigma c} $$
		is a distinguished triangle with $c, c', c''\in \Sigma\mA*\mA$,  there exists a morphism $\delta: H_1(c'')\to H_0(c)$, such that 
		\[\begin{tikzcd}
			0 & {H_1(c)} & {H_1(c')} & {H_1(c'')} & {H_0(c)} & {H_0(c')} & {H_0(c'')} & 0
			\arrow[from=1-1, to=1-2]
			\arrow["{H_1(f)}", from=1-2, to=1-3]
			\arrow["{H_1(g)}", from=1-3, to=1-4]
			\arrow["\delta", from=1-4, to=1-5]
			\arrow["{H_0(f)}", from=1-5, to=1-6]
			\arrow["{H_0(g)}", from=1-6, to=1-7]
			\arrow[from=1-7, to=1-8]
		\end{tikzcd}\]
		is an exact sequence in $\mA$.
\end{prop}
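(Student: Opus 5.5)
Write $\mD_1=\Sigma\mA*\mA$. The plan is to realize $H_1$ and $H_0$ as the two halves of an $s$-torsion pair $(\Sigma\mA,\mA)$ in $\mD_1$, and then obtain the long exact sequence from the octahedral axiom together with properness of $\mA$.

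\textbf{Step 1: the $s$-torsion pair and the functors.}
First, $\mD_1$ is extension closed. Indeed, $\mD_1*\mD_1=\Sigma\mA*(\mA*\Sigma\mA)*\mA$, and I want $\mA*\Sigma\mA\subseteq\Sigma\mA*\mA$. Given a triangle $a\to z\to\Sigma a'\xrightarrow{h}\Sigma a$, the map $h$ is a morphism $\Sigma a'\to\Sigma a$ of the shifted abelian category. Factor it there, and use the octahedral axiom to rewrite $z$ as an extension of $\mathrm{coker}$ by $\Sigma\ker$, up to the appropriate shift conventions. Kortegård proves exactly this, so I would cite or replicate it.

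Next, $(\Sigma\mA,\mA)$ is an $s$-torsion pair in $\mD_1$ in the sense of Adachi's Definition 3.1. We have $\mathrm{Hom}(\Sigma\mA,\mA)=0$ and $\mathrm{Hom}(\Sigma\mA,\Sigma^{-1}\mA)=\mathrm{Hom}(\Sigma^2\mA,\mA)=0$ by hypothesis.

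Adachi's Proposition 3.7 then gives functorial triangles
\[
\Sigma H_1(c)\to c\to H_0(c)\to \Sigma^2H_1(c),
\]
with $H_1=\Sigma^{-1}T$ and $H_0=F$ additive functors $\mD_1\to\mA$.

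\textbf{Step 2: exactness at the ends.}
For the triangle $c\to c'\to c''\to\Sigma c$, I will show that $H_0(c)\to H_0(c')\to H_0(c'')\to 0$ is right exact in $\mA$. Apply $\mathrm{Hom}(-,a)$ for $a\in\mA$ and use the adjunction $\mathrm{Hom}(H_0c,a)=\mathrm{Hom}(c,a)$. This yields
\[
\mathrm{Hom}(\Sigma c,a)\to\mathrm{Hom}(c'',a)\to\mathrm{Hom}(c',a)\to\mathrm{Hom}(c,a).
\]
The first term vanishes: $\Sigma c\in\Sigma^2\mA*\Sigma\mA$, and both $\mathrm{Hom}(\Sigma^2\mA,\mA)$ and $\mathrm{Hom}(\Sigma\mA,\mA)$ are zero. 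So $\mathrm{Hom}(H_0(-),a)$ is left exact, and by Yoneda in $\mA$ we get $H_0c\to H_0c'\to H_0c''\to0$ exact.

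Dually, apply $\mathrm{Hom}(\Sigma a,-)$ and use the right adjoint $T$. Now $\mathrm{Hom}(\Sigma a,\Sigma^{-1}c'')$ vanishes because $\Sigma^{-1}c''\in\mA*\Sigma^{-1}\mA$ and $\mathrm{Hom}(\Sigma\mA,\mA)=\mathrm{Hom}(\Sigma\mA,\Sigma^{-1}\mA)=0$. This gives the exact sequence $0\to H_1c\to H_1c'\to H_1c''$.

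\textbf{Step 3: the connecting map and exactness in the middle.}
Define $\delta$ as the composite
\[
\Sigma H_1(c'')\to c''\to\Sigma c\to\Sigma H_0(c),
\]
where the last map is $\Sigma\pi_c$; desuspending gives $\delta\colon H_1(c'')\to H_0(c)$.

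Exactness at $H_1(c'')$, $H_0(c)$ and $H_0(c')$ is the main obstacle. To handle it, I would use the octahedral axiom to compare $c'$ with a cone built from $\Sigma H_1$ and $H_0$ of $c$ and $c''$. Concretely, form the homotopy pullback $w$ of $c'\to c''$ along $\Sigma H_1(c'')\to c''$, and the pushout along $c\to H_0(c)$. This produces a triangle
\[
H_0(c)\to m\to H_0(c'')\to\Sigma H_0(c)
\]
in which $m$ is, up to a torsion-part correction, $H_0(c')$. Properness of $\mA$ turns this triangle into a short exact sequence whenever $m\in\mA$, and this is where extension closure of $\mA$ enters.

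Identifying the kernel and cokernel of $\delta$ with the images of $H_1(g)$ and $H_0(f)$ then reduces to a diagram chase using the uniqueness of torsion triangles. The vanishing conditions $E_1,E_2$ guarantee that the maps induced between the torsion and free parts are unique. I expect most of the effort to be in this octahedral bookkeeping and in checking that $\delta$ is well defined, i.e.\ independent of the choices made.
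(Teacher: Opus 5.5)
Your Steps 1 and 2 are correct and agree with the paper's treatment at $\ell=1$. Extension closure of $\Sigma\mA*\mA$ is Lemma~\ref{lem 5.1}, and the $s$-torsion pair is Proposition~\ref{prop 3.4}. Your two Hom arguments are the first half of Proposition~\ref{prop:top-four-term} and its dual. Write $h\colon c''\to\Sigma c$ for the third map. Your $\delta$, given by $\Sigma\delta=(\Sigma\pi_c)h\iota_{c''}$, is the paper's boundary (Lemma~\ref{lem:edge-characterization} with $r=1$). It involves no choices, and exactness at $H_0(c')$ is already contained in your Step 2.

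\textbf{The gap is Step 3.} The equalities $\mathrm{Ker}\,\delta=\mathrm{Im}\,H_1(g)$ and $\mathrm{Im}\,\delta=\mathrm{Ker}\,H_0(f)$ are the whole content of the statement, and they are announced, not proved. Moreover, the one concrete construction you describe does not do what you claim.
\begin{itemize}
\item Any triangle $H_0(c)\to m\to H_0(c'')\to\Sigma H_0(c)$ automatically has $m\in\mA$ by extension closure. Properness then makes $H_0(c)\to m$ a monomorphism.
\item But $H_0(f)$ is not monic in general, since its kernel is $\mathrm{Im}\,\delta$. For the rotated triangle $a\to 0\to\Sigma a\xrightarrow{-1}\Sigma a$ one has $H_0(f)\colon a\to 0$ and $\delta=-1_a$.
\item More precisely, a triangle with these end terms receiving a morphism from the given triangle with outer components $\pi_c$ and $\pi_{c''}$ exists only if $\Sigma\delta=(\Sigma\pi_c)h\iota_{c''}=0$.
\item What your pullback along $\iota_{c''}$ followed by pushout along $\pi_c$ actually produces is a triangle $H_0(c)\to w'\to\Sigma H_1(c'')\xrightarrow{\pm\Sigma\delta}\Sigma H_0(c)$. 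So $w'$ is a cone of $\delta$, with $H_1(w')\cong\mathrm{Ker}\,\delta$ and $H_0(w')\cong\mathrm{Coker}\,\delta$.
\end{itemize}
The deferred ``diagram chase'' is precisely the proof.

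\textbf{Closing the gap along the paper's route} (Theorem~\ref{thm:main-long-exact} at $\ell=1$):
\begin{itemize}
\item Put $I=\mathrm{Im}\,H_1(g)$ and realize $0\to H_1(c)\to H_1(c')\to I\to 0$ as a triangle.
\item Apply the $3\times 3$ lemma to the top decompositions of $c$ and $c'$. This gives a triangle $\Gamma\colon H_0(c)\to H_0(c')\to N\to\Sigma H_0(c)$ with $H_1(N)\cong J:=\mathrm{Coker}(I\hookrightarrow H_1(c''))$ and $H_0(N)\cong H_0(c'')$.
\item The boundary factors as $\delta=\delta^\Gamma q_J$, with $\delta^\Gamma$ monic because $\mathrm{Hom}(\Sigma\mA,\mA)=0$ (Lemma~\ref{lem:edge-monomorphism}). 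Hence $\mathrm{Ker}\,\delta=I$.
\item An octahedron on the monomorphism $\delta^\Gamma$, together with properness, gives $0\to\mathrm{Coker}\,\delta^\Gamma\to H_0(c')\to H_0(c'')\to 0$ (Proposition~\ref{prop:relative-snake}). This is exactness at $H_0(c)$.
\end{itemize}

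\textbf{Completing your own route instead.}
\begin{itemize}
\item The homotopy pullback $w$ lies in $\Sigma\mA*\mA$ and sits in a triangle $w\xrightarrow{\phi}c'\to H_0(c'')\to\Sigma w$. Your Step 2 then makes $H_1(\phi)$ invertible.
\item The octahedron on $\phi\iota_w=\iota_{c'}\Sigma H_1(\phi)$ yields a triangle $H_0(w)\to H_0(c')\to H_0(c'')\to\Sigma H_0(w)$ inside $\mA$, hence a short exact sequence.
\item Let $\psi\colon c\to w$ come from the triangle $c\xrightarrow{\psi}w\to\Sigma H_1(c'')\xrightarrow{h\iota_{c''}}\Sigma c$. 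Then $H_1(\psi)$ is monic by Step 2.
\item Use the octahedra on $\Sigma H_1(c)\xrightarrow{\iota_c}c\xrightarrow{\psi}w$ and on $\Sigma H_1(c)\xrightarrow{\Sigma H_1(\psi)}\Sigma H_1(w)\xrightarrow{\iota_w}w$; both composites are the same map. Together they identify the torsion decomposition of the cone $w'$ of $\delta$. This gives $0\to H_1(c)\to H_1(w)\to\mathrm{Ker}\,\delta\to 0$ and $H_0(w)\cong\mathrm{Coker}\,\delta$.
\end{itemize}

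On either route you must still verify that the maps produced are $H_1(f)$, $H_1(g)$, $H_0(f)$ and $\delta$. This uses the uniqueness supplied by $\mathrm{Hom}(\Sigma\mA,\mA)=\mathrm{Hom}(\Sigma^2\mA,\mA)=0$. None of this appears in your proposal.
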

\vskip 15pt

\section{Homology for proper abelian subcategories}\label{section 3}

Throughout this section, let $\ell\geq 1$,  $\mC$ be a triangulated category with suspension functor $\Sigma$, and let $\mA\subseteq\mC$ be a full, replete, extension-closed proper abelian subcategory.  We impose the single standing hypothesis
\begin{equation}\tag{$E_{\ell+1}$}\label{eq:global-vanishing}
	\mathrm{Hom}_{\mC}(\Sigma^p\mA,\mA)=0
	\qquad\text{for }1\leq p\leq\ell+1.
\end{equation}
For $a\leq b$, put
$$\mD_{[b, a]}:=\Sigma^b\mA*\Sigma^{b-1}\mA*\cdots*\Sigma^a\mA,$$
and we abbreviate $\mD_i:=\mD_{[i, 0]}$ and $\mA_i:=\Sigma^i\mA$.

\subsection{Exchange, extension closure, and orthogonality}
\begin{lem}\label{lem 5.1}
	For $0\leq p< q\leq \ell$, one has 
	$\mA_p*\mA_q\subseteq \mA_q *\mA_p.$
\end{lem}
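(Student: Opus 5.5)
Take $z\in\mA_p*\mA_q$, witnessed by a triangle
\[
\Sigma^p a\xrightarrow{u} z\xrightarrow{v}\Sigma^q b\xrightarrow{w}\Sigma^{p+1}a,\qquad a,b\in\mA .
\]
The plan is to show that $w=0$. The triangle then splits, so $z\cong\Sigma^p a\oplus\Sigma^q b$. Since the rotated split triangle $\Sigma^q b\to z\to\Sigma^p a\xrightarrow{0}\Sigma^{q+1}b$ exhibits $z\in\mA_q*\mA_p$, the inclusion follows; subcategories are replete, so the isomorphism causes no trouble.

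To see that $w=0$, note that $w\in\mathrm{Hom}_{\mC}(\Sigma^q b,\Sigma^{p+1}a)\cong\mathrm{Hom}_{\mC}(\Sigma^{q-p-1}b,a)$. Put $r=q-p-1$, so $0\le r\le \ell-1$.

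If $r\ge 1$, then $1\le r\le \ell-1\le \ell+1$, and the standing hypothesis $(E_{\ell+1})$ gives $\mathrm{Hom}_{\mC}(\Sigma^r\mA,\mA)=0$. Hence $w=0$.

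The case $r=0$, i.e.\ $q=p+1$, is the main obstacle. Here $w$ corresponds to an arbitrary morphism $b\to a$ in $\mA$, which need not vanish, so the splitting argument breaks down. Instead, shift the triangle by $\Sigma^{-p}$ and rotate it to get
\[
b\xrightarrow{w'}a\to\Sigma^{-p}z\to\Sigma b,
\]
with $w'=\Sigma^{-p-1}w$ (up to sign). So $\Sigma^{-p}z$ is the cone of a morphism $w'\colon b\to a$ in the abelian category $\mA$.

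Set $K=\ker w'$, $I=\operatorname{im} w'$ and $C=\operatorname{coker} w'$. Properness gives triangles
\[
K\to b\to I\to\Sigma K,\qquad I\to a\to C\to\Sigma I .
\]
Now apply the octahedral axiom to the composite $b\to I\to a$. This yields a triangle
\[
\operatorname{cone}(b\to I)\to\operatorname{cone}(w')\to\operatorname{cone}(I\to a)\to.
\]
Here $\operatorname{cone}(b\to I)\cong\Sigma K$ and $\operatorname{cone}(I\to a)\cong C$, so we obtain
\[
\Sigma K\to\Sigma^{-p}z\to C\xrightarrow{\ \theta\ }\Sigma^2K .
\]
The connecting map $\theta$ lies in $\mathrm{Hom}_{\mC}(C,\Sigma^2K)$, a degree-$2$ extension. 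It is not killed by $(E_{\ell+1})$, since that hypothesis only kills the Hom groups $\mathrm{Hom}(\Sigma^{\ge1}\mA,\mA)$, so $\theta$ need not vanish. However, a triangle $\Sigma K\to\Sigma^{-p}z\to C\to\Sigma^2 K$ already exhibits $\Sigma^{-p}z\in\Sigma\mA*\mA$. Shifting back by $\Sigma^p$ gives $z\in\mA_{p+1}*\mA_p=\mA_q*\mA_p$, as required, without needing any splitting.

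I would double-check the octahedral identification carefully, in particular the signs and which cone sits on the left, since that is where an error could slip in.
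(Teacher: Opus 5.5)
Your proof is correct and matches the paper's argument: when $q\geq p+2$ the connecting morphism vanishes by $(E_{\ell+1})$ and the triangle splits, and when $q=p+1$ you factor $b\to a$ through its image and apply the octahedral axiom to obtain a triangle $\Sigma\operatorname{Ker}\to\Sigma^{-p}z\to\operatorname{Coker}\to$. The signs you worry about are harmless, because membership in $\mA_q*\mA_p$ depends only on the objects in the third octahedral triangle, not on the signs of its maps.
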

\begin{proof}
	For any $x\in \mA_p*\mA_q$, choose a distinguished triangle
	\[\begin{tikzcd}
		{\Sigma^p a_1} & x & {\Sigma^qa_2} & {\Sigma^{p+1} a_1}
		\arrow[from=1-1, to=1-2]
		\arrow[from=1-2, to=1-3]
		\arrow["\omega", from=1-3, to=1-4]
	\end{tikzcd}\]
	where $a_1, a_2\in\mA$.
	
	If $q\geq p+2$, then $\omega\in \mathrm{Hom}_{\mC}(\mA_q, \mA_{p+1})\cong\mathrm{Hom}_{\mC}(\mA_{q-p-1}, \mA)=0$. Hence the above triangle is split, which shows that $x\in \mA_q*\mA_p$.
	
	If $q=p+1$, after rotating and shifting by $\Sigma^{-p}$, we have the following distinguished triangle
	\[\begin{tikzcd}
		{a_2} & {a_1} & {\Sigma^{-p}x} & {\Sigma a_2}.
		\arrow["f", from=1-1, to=1-2]
		\arrow[from=1-2, to=1-3]
		\arrow[from=1-3, to=1-4]
	\end{tikzcd}\]
	Properness of \(\mathcal A\) and the octahedral axiom give the following diagram.
	\[\begin{tikzcd}
		{a_2} & {\mathrm{Im}f} & {\Sigma\mathrm{Ker}f} & {} \\
		{a_2} & {a_1} & {\Sigma^{-p}x} & {} \\
		& {\mathrm{Coker}f} & {\mathrm{Coker}f} \\
		& {} & {}
		\arrow[from=1-1, to=1-2]
		\arrow[equals, from=1-1, to=2-1]
		\arrow[from=1-2, to=1-3]
		\arrow[from=1-2, to=2-2]
		\arrow["{+1}", from=1-3, to=1-4]
		\arrow[dashed, from=1-3, to=2-3]
		\arrow["f", from=2-1, to=2-2]
		\arrow[from=2-2, to=2-3]
		\arrow[from=2-2, to=3-2]
		\arrow["{+1}", from=2-3, to=2-4]
		\arrow[dashed, from=2-3, to=3-3]
		\arrow[equals, from=3-2, to=3-3]
		\arrow["{+1}", from=3-2, to=4-2]
		\arrow["{+1}", from=3-3, to=4-3]
	\end{tikzcd}\]
	The resulting third octahedral triangle shows that $x\in \mA_q*\mA_p$.
\end{proof}

\begin{lem}\label{lem 3.2}
	Let $\mathcal X_1,\ldots,\mathcal X_s$ and
	$\mathcal Y_1,\ldots,\mathcal Y_t$ be subcategories of $\mC$.  If
	\[
	\mathrm{Hom}_{\mC}(\mathcal X_i,\mathcal Y_j)=0
	\qquad(1\leq i\leq s,\ 1\leq j\leq t),
	\]
	then
	\[
	\mathrm{Hom}_{\mC}(\mathcal X_1*\cdots*\mathcal X_s,
	\mathcal Y_1*\cdots*\mathcal Y_t)=0.
	\]
\end{lem}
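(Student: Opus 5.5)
The plan is a double induction, first on $t$ and then on $s$. Each reduction uses the long exact $\mathrm{Hom}$ sequences that a distinguished triangle induces.

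\emph{Step 1: one source, arbitrary target.} Assume $s=1$ and write $\mathcal X=\mathcal X_1$. We show $\mathrm{Hom}_{\mC}(\mathcal X,\mathcal Y_1*\cdots*\mathcal Y_t)=0$ by induction on $t$; the case $t=1$ is the hypothesis. For $t\geq 2$, take $y\in\mathcal Y_1*\cdots*\mathcal Y_t=\mathcal Y_1*(\mathcal Y_2*\cdots*\mathcal Y_t)$, using associativity of $*$. Choose a triangle
\[
y_1\longrightarrow y\longrightarrow y'\longrightarrow\Sigma y_1
\]
with $y_1\in\mathcal Y_1$ and $y'\in\mathcal Y_2*\cdots*\mathcal Y_t$. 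For $x\in\mathcal X$, the functor $\mathrm{Hom}_{\mC}(x,-)$ is homological, so
\[
\mathrm{Hom}_{\mC}(x,y_1)\longrightarrow\mathrm{Hom}_{\mC}(x,y)\longrightarrow\mathrm{Hom}_{\mC}(x,y')
\]
is exact. The outer terms vanish, by the hypothesis and by the induction hypothesis respectively. Hence the middle term vanishes.

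\emph{Step 2: arbitrary source.} Set $\mathcal Y:=\mathcal Y_1*\cdots*\mathcal Y_t$. By Step 1, $\mathrm{Hom}_{\mC}(\mathcal X_i,\mathcal Y)=0$ for every $i$. We now induct on $s$. For $x\in\mathcal X_1*(\mathcal X_2*\cdots*\mathcal X_s)$, choose a triangle
\[
x_1\longrightarrow x\longrightarrow x'\longrightarrow\Sigma x_1
\]
with $x_1\in\mathcal X_1$ and $x'\in\mathcal X_2*\cdots*\mathcal X_s$. The contravariant functor $\mathrm{Hom}_{\mC}(-,y)$ is cohomological, so
\[
\mathrm{Hom}_{\mC}(x',y)\longrightarrow\mathrm{Hom}_{\mC}(x,y)\longrightarrow\mathrm{Hom}_{\mC}(x_1,y)
\]
is exact. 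The left term vanishes by induction and the right term vanishes by Step 1. Hence $\mathrm{Hom}_{\mC}(x,y)=0$.

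There is no real obstacle here. The only point needing care is that the middle term of a triangle sits between the two outer terms in the correct direction for each variance. This holds because $\mathrm{Hom}$ is half-exact in the middle position of a triangle, in both variables. Zero objects cause no trouble, since all subcategories are additive.
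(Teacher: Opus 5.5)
Your proof is correct and is essentially the paper's argument. The paper inducts on $s+t$, using that $\mathrm{Hom}_{\mC}(-,y)$ and $\mathrm{Hom}_{\mC}(x,-)$ make left and right orthogonals closed under extensions; your double induction (first on $t$ with $s=1$, then on $s$) carries out those same two observations explicitly.
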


\begin{proof}
	Induct on $s+t$.  Applying $\mathrm{Hom}_{\mC}(-,y)$ to a triangle proves closure of
	the left orthogonal $\prescript{\perp}{}{y}$ under extensions.  Applying
	$\mathrm{Hom}_{\mC}(x,-)$ proves the dual assertion for the right variable.  Iterating
	these two observations gives the claim.
\end{proof}

\begin{prop}\label{prop 3.3}
	For $b-a\leq \ell$, the subcategory $\mD_{[b,a]}$ is extension closed.
	Consequently it inherits the restricted extriangulated structure and the
	negative first extension
	\[
	\mathbb E^{-1}(x,y):=\mathrm{Hom}_{\mC}(x,\Sigma^{-1}y).
	\]
\end{prop}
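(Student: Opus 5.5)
We must show $\mD_{[b,a]}*\mD_{[b,a]}\subseteq\mD_{[b,a]}$ when $b-a\leq\ell$. The plan is to reduce everything to Lemma~\ref{lem 5.1} by a sorting argument. The second assertion then follows from \cite[Example~2.4]{Adachi2023}, quoted in the preliminaries.

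Since $*$ is associative,
\[
\mD_{[b,a]}*\mD_{[b,a]}=\mA_b*\cdots*\mA_a*\mA_b*\cdots*\mA_a .
\]
This is a word in the symbols $\mA_b,\dots,\mA_a$. We reorder it into non-increasing order of indices by adjacent transpositions. Whenever a factor $\mA_p*\mA_q$ with $p<q$ occurs, we replace it by $\mA_q*\mA_p$. Lemma~\ref{lem 5.1} applies after shifting by $\Sigma^{-a}$. Indeed, $\Sigma^{-a}$ is a triangle equivalence, so it commutes with $*$, and it turns $\mA_p*\mA_q$ into $\mA_{p-a}*\mA_{q-a}$ with $0\leq p-a<q-a\leq b-a\leq\ell$. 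Hence $\mA_p*\mA_q\subseteq\mA_q*\mA_p$. Moreover, $\mX\subseteq\mX'$ implies $\mathcal Y*\mX*\mathcal Z\subseteq\mathcal Y*\mX'*\mathcal Z$, so each swap inside the long word only enlarges it. A bubble sort terminates, and we obtain
\[
\mD_{[b,a]}*\mD_{[b,a]}\subseteq \mA_b*\mA_b*\mA_{b-1}*\mA_{b-1}*\cdots*\mA_a*\mA_a .
\]

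Next we use that $\mA$ is extension closed. Then $\mA_i*\mA_i=\Sigma^i(\mA*\mA)\subseteq\mA_i$, so each doubled factor collapses and the right-hand side is contained in $\mD_{[b,a]}$. This is the whole argument.

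There is no real obstacle. The only points needing care are the monotonicity of $*$ under inclusion and the compatibility of $*$ with $\Sigma^{-a}$, both of which are immediate from the definitions. Applying Lemma~\ref{lem 5.1} only to pairs with index difference at most $b-a\leq\ell$ stays within its hypotheses, which are where the standing assumption $(E_{\ell+1})$ enters.
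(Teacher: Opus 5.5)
Your proof is correct and takes essentially the paper's route: associativity of $*$, repeated use of Lemma~\ref{lem 5.1} to move later copies of higher layers to the left, and collapsing $\Sigma^i\mA*\Sigma^i\mA\subseteq\Sigma^i\mA$ by extension closure of $\mA$. Your explicit shift by $\Sigma^{-a}$ to reach the index range $0\leq p<q\leq\ell$ of Lemma~\ref{lem 5.1}, and your use of monotonicity of $*$ under inclusion, are details the paper leaves implicit.
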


\begin{proof}
	Each $\Sigma^j\mA$ is extension closed.  Starting with
	\[
	(\Sigma^b\mA*\cdots*\Sigma^a\mA)
	*(\Sigma^b\mA*\cdots*\Sigma^a\mA),
	\]
	use associativity of $*$ and repeatedly apply Lemma
	\ref{lem 5.1} to move the second copy of $\Sigma^b\mA$ to
	the left.  Combine the adjacent copies, and repeat with the remaining
	layers.  This proves
	$\mD_{[b,a]}*\mD_{[b,a]}\subseteq\mD_{[b,a]}$.
	The final assertion is the standard restriction of the triangulated
	extriangulated structure to an extension-closed subcategory.
\end{proof}

\subsection{Block decompositions and homology functors}

\begin{prop}\label{prop 3.4}
	Let $a\leq b$ with $b-a\leq \ell$.  For every
	$a\leq i\leq b+1$, the pair
	\begin{equation}
		\bigl(\mD_{[b,i]},\mD_{[i-1,a]}\bigr)
	\end{equation}
	is an $s$-torsion pair in $\mD_{[b,a]}$.  The cases $i=a$ and
	$i=b+1$ are the two trivial endpoint pairs.  In particular,
	\begin{equation}
		(\Sigma^i\mA,\mD_{i-1})
	\end{equation}
	is an $s$-torsion pair in $\mD_i$ for $0\leq i\leq \ell$.
\end{prop}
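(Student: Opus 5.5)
We verify the three defining conditions of an $s$-torsion pair for $\mathcal T=\mD_{[b,i]}$ and $\mathcal F=\mD_{[i-1,a]}$ inside $\mathcal Z=\mD_{[b,a]}$. By Proposition \ref{prop 3.3}, $\mathcal Z$ is extension closed, so the definition applies. For the endpoint cases we use the convention that an empty $*$-product is the zero subcategory. With $i=a$ the pair is $(\mathcal Z,0)$, and with $i=b+1$ it is $(0,\mathcal Z)$; both are trivially $s$-torsion pairs. So assume $a<i\leq b$.

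The first step is the decomposition $\mathcal Z=\mathcal T*\mathcal F$. By associativity of $*$,
\[
\mD_{[b,a]}=(\Sigma^b\mA*\cdots*\Sigma^i\mA)*(\Sigma^{i-1}\mA*\cdots*\Sigma^a\mA)=\mD_{[b,i]}*\mD_{[i-1,a]},
\]
which is immediate from the definitions. Since all the categories involved are replete, no closure issues arise.

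The second step is the vanishing $\mathrm{Hom}_{\mC}(\mathcal T,\mathcal F)=0$ and $\mathrm{Hom}_{\mC}(\mathcal T,\Sigma^{-1}\mathcal F)=0$. Both reduce, by Lemma \ref{lem 3.2}, to vanishing between the individual layers. In the first case the layers are $\Sigma^p\mA$ with $i\leq p\leq b$ and $\Sigma^q\mA$ with $a\leq q\leq i-1$. In the second case they are $\Sigma^p\mA$ and $\Sigma^{q-1}\mA$ with $p,q$ in the same ranges; here we use $\Sigma^{-1}\mD_{[i-1,a]}=\Sigma^{i-2}\mA*\cdots*\Sigma^{a-1}\mA$, since $\Sigma^{-1}$ is a triangle functor and commutes with $*$. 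In each case we have
\[
\mathrm{Hom}_{\mC}(\Sigma^p\mA,\Sigma^{r}\mA)\cong\mathrm{Hom}_{\mC}(\Sigma^{p-r}\mA,\mA)
\quad\text{with}\quad r\in\{q,\,q-1\}.
\]
This gives $1\leq p-r\leq b-(a-1)=b-a+1\leq \ell+1$. So every layer Hom vanishes by the standing hypothesis $(E_{\ell+1})$.

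The "in particular" statement is the case $a=0$, $b=i\leq\ell$. It gives $(\mD_{[i,i]},\mD_{[i-1,0]})=(\Sigma^i\mA,\mD_{i-1})$ in $\mD_i$.

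The only point needing care is the degree count in the $\Sigma^{-1}$-condition. That condition is exactly where the top degree $p-r=\ell+1$ appears, so it is what forces $(E_{\ell+1})$ rather than $(E_\ell)$. Everything else is formal from Lemma \ref{lem 3.2} and associativity.
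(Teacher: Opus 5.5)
Your proof is correct and follows the paper's own argument. Associativity of $*$ gives $\mD_{[b,a]}=\mD_{[b,i]}*\mD_{[i-1,a]}$, including the endpoint cases. Lemma~\ref{lem 3.2} then reduces both orthogonality axioms to the layer vanishings $\mathrm{Hom}_{\mC}(\Sigma^{p-q}\mA,\mA)=0$ and $\mathrm{Hom}_{\mC}(\Sigma^{p-q+1}\mA,\mA)=0$, whose exponents lie in $[1,\ell+1]$ and are covered by $(E_{\ell+1})$.
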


\begin{proof}
	Associativity gives
	$\mD_{[b,a]}=\mD_{[b,i]}*\mD_{[i-1,a]}$; this also proves the endpoint
	cases.  Otherwise, if $i\leq p\leq b$ and $a\leq q\leq i-1$, then
	\[
	\mathrm{Hom}_{\mC}(\Sigma^p\mA,\Sigma^q\mA)
	=\mathrm{Hom}_{\mC}(\Sigma^{p-q}\mA,\mA)=0
	\]
	and
	\[
	\mathrm{Hom}_{\mC}(\Sigma^p\mA,\Sigma^{-1}\Sigma^q\mA)
	=\mathrm{Hom}_{\mC}(\Sigma^{p-q+1}\mA,\mA)=0
	\]
	hold by
	\eqref{eq:global-vanishing}, and Lemma
	\ref{lem 3.2} proves the two orthogonality axioms.
\end{proof}

Fix $0\leq n\leq \ell$.  For $0\leq i\leq n+1$, apply
Proposition \ref{prop 3.4} to the block pair
\[
(\mD_{[n,i]},\mD_{i-1})
\]
inside $\mD_n$.  The uniqueness theorem for $s$-torsion decompositions
gives functors
\[
U_i^{[n]}:\mD_n\longrightarrow\mD_{[n,i]},
\qquad
L_{i-1}^{[n]}:\mD_n\longrightarrow\mD_{i-1}
\]
and natural triangles
\begin{equation}\label{eq:block-decomposition}
	U_i^{[n]}(x)\xrightarrow{e_i^x}x
	\xrightarrow{p_{i-1}^x} L_{i-1}^{[n]}(x)
	\longrightarrow\Sigma U_i^{[n]}(x).
\end{equation}
Use the endpoint conventions
\[
U_0^{[n]}=\mathrm{Id}_{\mD_n},\quad L_{-1}^{[n]}=0,
\qquad U_{n+1}^{[n]}=0,\quad L_n^{[n]}=\mathrm{Id}_{\mD_n}.
\]
We normalize the two trivial endpoint triangles by
\[
e_0^x=1_x,\quad p_{-1}^x=0,
\qquad e_{n+1}^x=0,\quad p_n^x=1_x.
\]

\begin{construction}\label{cons:canonical-tower}
	For $0\leq i\leq n$, apply the elementary $s$-torsion pair
	$(\Sigma^i\mA,\mD_{i-1})$ in $\mD_i$ to the functorial residual
	$L_i^{[n]}(x)$.  Here the case $i=0$ means the trivial pair
	$(\mA,0)$, and $L_n^{[n]}=\mathrm{Id}$.  Since the composite of the two left
	adjoints onto $\mD_i$ and then $\mD_{i-1}$ is the left adjoint onto
	$\mD_{i-1}$, uniqueness identifies its residual term with
	$L_{i-1}^{[n]}(x)$.  Define the additive functor
	$H_i^{[n]}:\mD_n\to\mA$ by the resulting natural triangle
	\begin{equation}\label{eq:lower-layer-tower}
		\Sigma^iH_i^{[n]}(x)\xrightarrow{\alpha_i^x}L_i^{[n]}(x)
		\xrightarrow{\lambda_i^x}L_{i-1}^{[n]}(x)
		\longrightarrow\Sigma^{i+1}H_i^{[n]}(x).
	\end{equation}
	Thus the functors $H_i^{[n]}$ and the entire residual tower are defined
	only from the functorial $s$-torsion decompositions and their uniqueness.
	For $n=\ell$, write simply $H_i$.
\end{construction}

\begin{prop}
	For $0\leq i\leq n$, there is a unique natural map
	\[
	d_i^x:U_{i+1}^{[n]}(x)\longrightarrow U_i^{[n]}(x),
	\qquad e_i^xd_i^x=e_{i+1}^x,
	\]
	and it fits into a natural triangle
	\begin{equation}\label{eq:upper-layer-tower}
		U_{i+1}^{[n]}(x)\xrightarrow{d_i^x}U_i^{[n]}(x)
		\longrightarrow\Sigma^iH_i^{[n]}(x)
		\longrightarrow\Sigma U_{i+1}^{[n]}(x).
	\end{equation}
	Write $\beta_i^x:U_i^{[n]}(x)\to\Sigma^iH_i^{[n]}(x)$ for the
	second arrow of \eqref{eq:upper-layer-tower}, and write
	$p_i^x:x\to L_i^{[n]}(x)$ for the second arrow of the block
	decomposition at the cut $i+1$.  These maps form a natural commutative
	square
	\begin{equation}\label{eq:nested-block-square}
		\begin{tikzcd}[column sep=large]
			U_i^{[n]}(x) \arrow[r,"e_i^x"] \arrow[d,"\beta_i^x"'] &
			x \arrow[d,"p_i^x"]\\
			\Sigma^iH_i^{[n]}(x) \arrow[r,"\alpha_i^x"'] &
			L_i^{[n]}(x).
		\end{tikzcd}
	\end{equation}
\end{prop}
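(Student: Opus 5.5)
The map $d_i^x$ comes from the adjunction for the torsion part, and the triangle comes from an octahedron. By Proposition \ref{prop 3.4}, $U_i^{[n]}$ is right adjoint to the inclusion $\mD_{[n,i]}\hookrightarrow\mD_n$, with counit $e_i^x$. Since $U_{i+1}^{[n]}(x)\in\mD_{[n,i+1]}\subseteq\mD_{[n,i]}$, the morphism $e_{i+1}^x$ factors uniquely through $e_i^x$. Concretely, apply $\mathrm{Hom}_{\mC}(U_{i+1}^{[n]}(x),-)$ to the block triangle \eqref{eq:block-decomposition} at cut $i$. The terms $\mathrm{Hom}(\mD_{[n,i+1]},L_{i-1}^{[n]}(x))$ and $\mathrm{Hom}(\mD_{[n,i+1]},\Sigma^{-1}L_{i-1}^{[n]}(x))$ vanish by Lemma \ref{lem 3.2} and $(E_{\ell+1})$. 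This gives existence and uniqueness of $d_i^x$ with $e_i^xd_i^x=e_{i+1}^x$. Naturality follows from uniqueness: for $\phi\colon x\to y$, both $U_i(\phi)d_i^x$ and $d_i^yU_{i+1}(\phi)$ solve the same factorization problem for $\phi e_{i+1}^x=e_{i+1}^yU_{i+1}(\phi)$. The endpoint cases $i=0$ (where $e_0=1$) and $i=n$ (where $U_{n+1}=0$) are immediate.

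For the triangle, apply the octahedral axiom to the composite $e_{i+1}^x=e_i^x d_i^x$. The relevant cones are:
\[
\mathrm{cone}(e_{i+1}^x)=L_i^{[n]}(x),\qquad
\mathrm{cone}(e_i^x)=L_{i-1}^{[n]}(x),\qquad
C:=\mathrm{cone}(d_i^x).
\]
The octahedron yields a triangle $C\to L_i^{[n]}(x)\xrightarrow{\;\mu\;} L_{i-1}^{[n]}(x)\to\Sigma C$. It also yields the commutative square $p_i^x e_i^x=\gamma\,\beta$, where $\beta\colon U_i^{[n]}(x)\to C$ is the cone map and $\gamma\colon C\to L_i^{[n]}(x)$ is the octahedral map.

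The key step is to identify $C\cong\Sigma^iH_i^{[n]}(x)$ compatibly with $\alpha_i^x$. From the triangle $U_{i+1}\to U_i\to C$ with $U_{i+1},U_i\in\mD_{[n,i]}$, we get $C\in\mD_{[n,i]}*\Sigma\mD_{[n,i+1]}$. From the octahedral triangle we get $C\in\Sigma^{-1}\mD_{i-1}*\mD_i$. I would show $C\in\Sigma^i\mA$ as follows. Decompose $C$ with respect to the $s$-torsion pair $(\mD_{[n+1,i+1]},\Sigma^i\mA)$ in $\mD_{[n+1,i]}$, which is valid since the window length $n+1-i\le\ell+1$ only needs $(E_{\ell+1})$ for the orthogonality. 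Then check that the torsion part receives no nonzero map from, and is killed in, the second description; Lemma \ref{lem 3.2} with $(E_{\ell+1})$ gives the needed Hom-vanishing. A cleaner alternative avoids locating $C$ by hand. Observe that $\mu p_i^x=p_{i-1}^x$: both are the unit of the reflection onto $\mD_{i-1}$, via the composite of left adjoints used in Construction \ref{cons:canonical-tower}. So the triangle $C\to L_i\xrightarrow{\mu}L_{i-1}$ is isomorphic to the torsion triangle of $L_i^{[n]}(x)$ for $(\Sigma^i\mA,\mD_{i-1})$, once we know $C\in\Sigma^i\mA$ or at least $\mathrm{Hom}(\Sigma^i\mA, \Sigma^{-1}\mD_{i-1})$-orthogonality. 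By the uniqueness in Adachi's Proposition 3.7, $\mu$ agrees with $\lambda_i^x$ up to the identification, and $C\cong\Sigma^iH_i^{[n]}(x)$ with $\gamma$ corresponding to $\alpha_i^x$.

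With this identification, the octahedral triangle becomes \eqref{eq:upper-layer-tower}, and the octahedral square becomes \eqref{eq:nested-block-square}. Naturality of $\beta_i^x$ needs some care, since cones are not functorial in general. I would define $\beta_i^x$ as the unique map making \eqref{eq:nested-block-square} commute. It is unique because $\mathrm{Hom}(U_i^{[n]}(x),\Sigma^iH)\to\mathrm{Hom}(U_i^{[n]}(x),L_i^{[n]}(x))$ is injective, its kernel being governed by $\mathrm{Hom}(\mD_{[n,i]},\Sigma^{-1}\mD_{i-1})=0$. Naturality then follows from this uniqueness together with naturality of $\alpha$, $p$ and $e$.

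The main obstacle is this identification step: proving $\mathrm{cone}(d_i^x)\in\Sigma^i\mA$, or equivalently that $\mu$ is the reflection onto $\mD_{i-1}$ with fibre in $\Sigma^i\mA$, using only $(E_{\ell+1})$.
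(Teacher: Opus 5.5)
Your construction of $d_i^x$, the octahedron on $e_{i+1}^x=e_i^xd_i^x$, and the naturality argument for $\beta_i^x$ via injectivity of $\mathrm{Hom}_{\mC}(U_i^{[n]}(x),\alpha_i^x)$ are all sound. The proof is nevertheless incomplete at exactly the step you flag: you never establish $C=\mathrm{cone}(d_i^x)\cong\Sigma^iH_i^{[n]}(x)$ compatibly with $\alpha_i^x$.

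Your first route does not work as described. To kill the torsion part of $C$ using $C\in\Sigma^{-1}\mD_{i-1}*\mD_i$, you need $\mathrm{Hom}_{\mC}(\mD_{[n+1,i+1]},\Sigma^{-1}\mD_{i-1})=0$. Its pure-layer exponents run up to $n+2$, coming from $\Sigma^{n+1}\mA$ against $\Sigma^{-1}\mA$. So for $1\le i\le n=\ell$ this needs $\mathrm{Hom}_{\mC}(\Sigma^{\ell+2}\mA,\mA)=0$, which is not part of $(E_{\ell+1})$. Your second route is stated as conditional on the very membership $C\in\Sigma^i\mA$ it is supposed to deliver, so as written it is circular.

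The missing observation is that no information about $C$ is needed. The octahedron gives $\mu p_i^x=p_{i-1}^x$, and $\lambda_i^x$ is the unique map with this property. Indeed, if $(\mu-\lambda_i^x)p_i^x=0$, then $\mu-\lambda_i^x$ factors through $L_i^{[n]}(x)\to\Sigma U_{i+1}^{[n]}(x)$. But $\mathrm{Hom}_{\mC}(\Sigma U_{i+1}^{[n]}(x),L_{i-1}^{[n]}(x))=0$, since the exponents lie in $[3,n+1]$. Hence $\mu=\lambda_i^x$.

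So $C$ and $\Sigma^iH_i^{[n]}(x)$ are both cocones of the same morphism. \textup{(TR3)} applied to the rotated triangles, with identities on $L_i^{[n]}(x)$ and $L_{i-1}^{[n]}(x)$, gives an isomorphism $\theta\colon C\to\Sigma^iH_i^{[n]}(x)$ with $\alpha_i^x\theta=\gamma$. Setting $\beta_i^x:=\theta\beta$ then yields both the triangle and the square. Naturality of the third arrow follows from $\mathrm{Hom}_{\mC}(\Sigma U_{i+1}^{[n]}(x),\Sigma^i\mA)=0$.

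The paper gets $C\in\Sigma^i\mA$ more directly. It applies the $s$-torsion pair $(\mD_{[n,i+1]},\Sigma^i\mA)$ in $\mD_{[n,i]}$ to $U_i^{[n]}(x)$ itself; this window has width at most $\ell$, so the pair is legitimate under $(E_{\ell+1})$. Composing right adjoints identifies the torsion part with $U_{i+1}^{[n]}(x)$ and the counit with $d_i^x$, so the cone of $d_i^x$ is pure by construction. The octahedron is then used only to match this pure term with $H_i^{[n]}$.
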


\begin{proof}
	For $i<n$, apply the block $s$-torsion pair
	$(\mD_{[n,i+1]},\Sigma^i\mA)$ inside $\mD_{[n,i]}$; for $i=n$, use the
	trivial pair $(0,\Sigma^n\mA)$.  Compose its right adjoint with
	$U_i^{[n]}:\mD_n\to\mD_{[n,i]}$.  The composite is right adjoint to the
	inclusion $\mD_{[n,i+1]}\hookrightarrow\mD_n$, so it is canonically
	$U_{i+1}^{[n]}$.  Its counit is exactly the unique map $d_i^x$ satisfying
	$e_i^xd_i^x=e_{i+1}^x$.  Thus its residual is a functorial object of
	$\Sigma^i\mA$ and gives a natural triangle of the form
	\eqref{eq:upper-layer-tower}, initially with some functor
	$G_i^{[n]}:\mD_n\to\mA$ in place of $H_i^{[n]}$.
	
	Let $\gamma_i^x:U_i^{[n]}(x)\to\Sigma^iG_i^{[n]}(x)$ be the pure
	quotient map.  Apply \textup{(TR3)} to the square formed by
	$1_{U_{i+1}^{[n]}(x)}$ and $e_i^x$ in the upper layer triangle and the
	block decomposition at the cut $i+1$.  This gives
	\[
	c_i^x:\Sigma^iG_i^{[n]}(x)\longrightarrow L_i^{[n]}(x),
	\qquad c_i^x\gamma_i^x=p_i^xe_i^x.
	\]
	It is unique, because its ambiguity lies in
	\begin{equation}\label{eq:nested-comparison-vanishing}
		\mathrm{Hom}_{\mC}\bigl(\Sigma U_{i+1}^{[n]}(x),L_i^{[n]}(y)\bigr)=0
		\qquad(x,y\in\mD_n).
	\end{equation}
	On pure layers the positive exponents range from $2$ to at most
	$n+1\leq\ell+1$, so this follows from
	\eqref{eq:global-vanishing} and Lemma
	\ref{lem 3.2}.  Applying the same uniqueness to a
	morphism $x\to y$ shows that the maps $c_i^x$ are natural.
	
	There is also a unique natural map
	$\lambda_i^x:L_i^{[n]}(x)\to L_{i-1}^{[n]}(x)$ satisfying
	$\lambda_i^xp_i^x=p_{i-1}^x$: it is the factorization through the unit
	$p_i^x$ of the left adjoint onto $\mD_i$.  Apply the octahedral axiom
	objectwise to $U_{i+1}^{[n]}(x)\to U_i^{[n]}(x)\to x$.  It shows that
	\[
	\Sigma^iG_i^{[n]}(x)\xrightarrow{c_i^x}L_i^{[n]}(x)
	\xrightarrow{\lambda_i^x}L_{i-1}^{[n]}(x)\longrightarrow
	\Sigma^{i+1}G_i^{[n]}(x)
	\]
	is a triangle; the first two arrows are the canonical ones above by their
	uniqueness.  Thus it is an $(\Sigma^i\mA,\mD_{i-1})$-decomposition of
	$L_i^{[n]}(x)$.  Its unique comparison with the functorial elementary
	decomposition \eqref{eq:lower-layer-tower} identifies $G_i^{[n]}$ with
	$H_i^{[n]}$ naturally.  Transporting the upper triangle along this
	isomorphism gives \eqref{eq:upper-layer-tower}, and
	$c_i^x\gamma_i^x=p_i^xe_i^x$ becomes
	\eqref{eq:nested-block-square}.  The construction is independent of the
	chosen objectwise octahedron and does not choose octahedra functorially.
\end{proof}

At the endpoint $i=n$, \eqref{eq:upper-layer-tower} is
$0\to U_n^{[n]}(x)\xrightarrow{\beta_n^x}
\Sigma^nH_n^{[n]}(x)\to0$.  From now on we use this canonical
isomorphism as an identification
\begin{equation}\label{eq:top-endpoint-normalization}
	U_n^{[n]}(x)=\Sigma^nH_n^{[n]}(x),
	\qquad \beta_n^x=1,
	\qquad e_n^x=\alpha_n^x.
\end{equation}
This fixes the endpoint normalization used in the definition of the top
connecting morphism.

\begin{prop}\label{prop 3.7}
	Let $0\leq n\leq\ell$.
	\begin{enumerate}[label=\textup{(\arabic*)}]
		\item For $a\in\mA$ and $0\leq i,j\leq n$, naturally in $a$,
		\[
		H_i^{[n]}(\Sigma^ja)\cong
		\begin{cases}a,&i=j,\\0,&i\neq j.\end{cases}
		\]
		\item If $x\in\mD_m\subseteq\mD_n$, then
		\[
		U_i^{[n]}(x)\cong U_i^{[m]}(x),\qquad
		L_{i-1}^{[n]}(x)\cong L_{i-1}^{[m]}(x),\qquad
		H_i^{[n]}(x)\cong H_i^{[m]}(x)\quad(0\leq i\leq m),
		\]
		naturally, while for $m<i\leq n$ one has
		$U_i^{[n]}(x)=0$, $L_{i-1}^{[n]}(x)\cong x$, and
		$H_i^{[n]}(x)=0$.  These comparisons identify the counits, units, the
		maps $d_i$, and both layer triangles.  Thus adding a zero top layer
		does not change any pre-existing decomposition or layer map.  Suspending
		these canonical comparisons gives the identical statement for every
		shifted interval.
		\item Put $x^{<n}:=L_{n-1}^{[n]}(x)$.  Then
		\begin{equation}\label{eq:top-layer-decomposition}
			\Sigma^nH_n^{[n]}(x)\longrightarrow x\xrightarrow{\pi_x^n}x^{<n}
			\longrightarrow\Sigma^{n+1}H_n^{[n]}(x)
		\end{equation}
		is natural, and
		\[
		H_i^{[n]}(x)\xrightarrow[\sim]{H_i(\pi_x^n)}
		H_i^{[n-1]}(x^{<n})
		\qquad(0\leq i<n).
		\]
		\item For $0\leq m<n$,
		\[
		x\in\mD_m
		\quad\Longleftrightarrow\quad
		H_{m+1}^{[n]}(x)=\cdots=H_n^{[n]}(x)=0.
		\]
		In particular, $\mathrm{Ker} H_n^{[n]}=\mD_{n-1}$.
	\end{enumerate}
\end{prop}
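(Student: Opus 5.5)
The whole argument rests on one principle: uniqueness of $s$-torsion decompositions (Proposition [Adachi2023, 3.7]), applied to the block pairs of Proposition \ref{prop 3.4}. For each part I would exhibit some triangle with the right membership properties and read off the functors by uniqueness. Naturality then follows from uniqueness of the comparison maps, via the vanishing \eqref{eq:nested-comparison-vanishing} and Lemma \ref{lem 3.2}.

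For (1), take $x=\Sigma^ja$ with $0\le j\le n$.
\begin{itemize}
\item For $i\le j$, the triangle $\Sigma^ja\xrightarrow{1}\Sigma^ja\to0\to$ is a $(\mD_{[n,i]},\mD_{i-1})$-decomposition. Hence $U_i^{[n]}(x)=x$ and $L_{i-1}^{[n]}(x)=0$.
\item For $i>j$, the triangle $0\to x\xrightarrow{1}x\to$ is the decomposition. Hence $U_i^{[n]}(x)=0$ and $L_{i-1}^{[n]}(x)=x$.
\end{itemize}
Feeding this into \eqref{eq:lower-layer-tower}:
\begin{itemize}
\item for $i<j$, we get $L_i=L_{i-1}=0$, so $H_i=0$;
\item for $i>j$, $\lambda_i$ is the identity of $x$, so $\Sigma^iH_i=0$;
\item for $i=j$, we get $L_j=x$ and $L_{j-1}=0$, so $\Sigma^jH_j(x)\cong\Sigma^ja$.
\end{itemize}
Since $\Sigma^j$ is an equivalence, $H_j(\Sigma^ja)\cong a$. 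This identification is natural in $a$, because each of these triangles is functorial in $a$.

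For (2), let $x\in\mD_m$. For $i\le m$, the $(\mD_{[m,i]},\mD_{i-1})$-decomposition of $x$ inside $\mD_m$ is also a $(\mD_{[n,i]},\mD_{i-1})$-decomposition inside $\mD_n$. Uniqueness therefore identifies $U^{[n]}_i$ with $U^{[m]}_i$ and $L^{[n]}_{i-1}$ with $L^{[m]}_{i-1}$. The maps $e$, $p$, $d$, $\lambda$ and $\alpha$ are all characterized by unique factorization properties, so they are identified as well. The layer triangles then agree, and so do the $H_i$. For $i>m$, the triangle $0\to x\to x\to$ is an admissible decomposition because $x\in\mD_m\subseteq\mD_{i-1}$. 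This gives $U_i=0$ and $L_{i-1}=x$, and \eqref{eq:lower-layer-tower} with $\lambda_i$ an isomorphism gives $H_i=0$. The shifted-interval statement follows by applying $\Sigma^k$, which carries each $s$-torsion pair to the shifted pair.

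For (3), \eqref{eq:top-layer-decomposition} is the block decomposition at the cut $i=n$, together with the normalization \eqref{eq:top-endpoint-normalization}. For $i<n$, the element $x^{<n}$ lies in $\mD_{n-1}$. Because the left adjoints compose, $L^{[n-1]}_{i}(x^{<n})\cong L^{[n]}_i(x)$ via $\pi^n_x$. This holds because the composite $\mD_n\to\mD_{n-1}\to\mD_i$ of left adjoints is the left adjoint onto $\mD_i$. Both towers \eqref{eq:lower-layer-tower} for $x$ and for $x^{<n}$ are elementary decompositions of the same objects, so uniqueness gives $H_i^{[n]}(x)\cong H_i^{[n-1]}(x^{<n})$. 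It remains to check that this isomorphism is induced by $H_i(\pi^n_x)$, which I would do through the naturality of \eqref{eq:lower-layer-tower} applied to $\pi^n_x$. I expect this to be the main bookkeeping obstacle. The point to verify is that $L_i^{[n]}(\pi^n_x)$, composed with the identification of (2), is exactly the canonical isomorphism. This follows once one checks that $L_i(\pi^n_x)$ carries $p_i^x$ to $p_i^{x^{<n}}$. That in turn holds because $p_i^{x^{<n}}\pi^n_x=p_i^x$, by the uniqueness of factorizations through units.

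For (4), the forward implication is (2). For the converse, use descending induction together with (3). Suppose $H_n^{[n]}(x)=0$. Then \eqref{eq:top-layer-decomposition} shows $x\cong x^{<n}\in\mD_{n-1}$. By (2), the vanishing of $H_{m+1}^{[n]},\dots,H_{n-1}^{[n]}$ transfers to $H^{[n-1]}$. Iterating reduces $x$ into $\mD_m$. Finally, $\mathrm{Ker}\,H_n^{[n]}=\mD_{n-1}$ is the case $m=n-1$.
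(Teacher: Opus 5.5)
Your proposal is correct and follows essentially the paper's argument: uniqueness of the block and elementary $s$-torsion decompositions gives (1) and (2), composing the left adjoints $\mD_n\to\mD_{n-1}\to\mD_i$ gives (3), and iterating the top-layer triangle gives (4). Your check that the comparison in (3) really is $H_i(\pi_x^n)$, via $L_i(\pi_x^n)$ matching the units, spells out a step the paper only asserts.
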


\begin{proof}
	For a pure object, each elementary $s$-torsion decomposition is either
	$u\xrightarrow{1}u\to0$ or $0\to u\xrightarrow{1}u$ until its unique
	degree is reached; this proves (1).
	
	If $x\in\mD_m$, the unit $x\to L_m^{[n]}(x)$ is an isomorphism: it is the
	unit of the left adjoint to the inclusion $\mD_m\hookrightarrow\mD_n$
	evaluated on an object already in $\mD_m$.  The residual tower below level
	$m$ is therefore the same tower as the one constructed inside $\mD_m$,
	while every elementary torsion part above level $m$ is zero.  Uniqueness
	of the units and of the elementary decompositions makes all these
	identifications natural.  Likewise, the block decomposition of $x$ at a
	cut $i\leq m$ inside $\mD_m$ is already a decomposition for the pair
	$(\mD_{[n,i]},\mD_{i-1})$ inside $\mD_n$; at a cut above $m$ it is the
	trivial triangle $0\to x\xrightarrow{1}x$.  Uniqueness of the block
	decompositions identifies $U_i,L_{i-1},e_i,p_{i-1}$ and therefore also
	$d_i,\alpha_i,\beta_i$.  This proves every assertion in (2).
	
	The case $i=n$ of \eqref{eq:lower-layer-tower} is
	\eqref{eq:top-layer-decomposition}.  Moreover, for $i<n$, the composite
	of the left adjoint $L_{n-1}^{[n]}:\mD_n\to\mD_{n-1}$ with the residual
	left adjoint inside $\mD_{n-1}$ is the residual left adjoint inside
	$\mD_n$.  Uniqueness of left adjoints therefore identifies the two lower
	towers, and the comparison is induced by the unit
	$\pi_x^n:x\to x^{<n}$.  Applying their elementary torsion functors proves
	(3), including the displayed assertion about $H_i(\pi_x^n)$.  Finally,
	repeatedly apply (3): a zero top layer makes the top decomposition triangle
	identify an object with its lower residual.  This proves (4).
\end{proof}

We suppress $[n]$ when the ambient category $\mD_n$ is clear.
The functors $H_i^{[n]}$, their block decomposition terms
$U_i^{[n]}$, and their uniqueness are the only homological tools
used in Section~\ref{section 4}.

We next show that recursive block decompositions recover
the same layer functors, independently of the order of cuts.

\subsection{Catalan coherence and path independence}

We use the layer homology functors already constructed from the
canonical layer tower,
\[
H_j^{[b,a]}:\mD_{[b,a]}\longrightarrow\mA
\qquad(a\leq j\leq b).
\]
The nested-block layer triangles characterize them from either side:
\begin{align}
	U_{j+1}^{[b,a]}x&\longrightarrow U_j^{[b,a]}x
	\longrightarrow\Sigma^jH_j^{[b,a]}(x)
	\longrightarrow\Sigma U_{j+1}^{[b,a]}x,
	\label{eq:upper-layer-triangle}\\
	\Sigma^jH_j^{[b,a]}(x)&\longrightarrow L_j^{[b,a]}x
	\longrightarrow L_{j-1}^{[b,a]}x
	\longrightarrow\Sigma^{j+1}H_j^{[b,a]}(x).
	\label{eq:lower-layer-triangle}
\end{align}

\begin{lem}
	\label{lem:block-restriction}
	Let $a<r\leq b$.
	\begin{enumerate}
		\item For $r\leq j\leq b$, there is a canonical natural
		isomorphism
		\[
		H_j^{[b,r]}U_r^{[b,a]}
		\xrightarrow{\ \sim\ }
		H_j^{[b,a]}.
		\tag{U}
		\]
		\item For $a\leq j<r$, there is a canonical natural isomorphism
		\[
		H_j^{[r-1,a]}L_{r-1}^{[b,a]}
		\xrightarrow{\ \sim\ }
		H_j^{[b,a]}.
		\tag{L}
		\]
	\end{enumerate}
	These isomorphisms identify the corresponding layer triangles and are
	compatible with further restriction to nested subintervals.
\end{lem}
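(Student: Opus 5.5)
Both isomorphisms will come from the uniqueness of $s$-torsion decompositions (Proposition~\ref{prop 3.4} together with the Adachi--Tsukamoto uniqueness result). The idea is to show that the relevant layer towers of $x$ in $\mD_{[b,a]}$ and of $U_r^{[b,a]}x$ in $\mD_{[b,r]}$ (respectively of $L_{r-1}^{[b,a]}x$ in $\mD_{[r-1,a]}$) are literally the same triangles. For (U), I would use the upper-layer characterization \eqref{eq:upper-layer-triangle}; for (L), the lower-layer characterization \eqref{eq:lower-layer-triangle}. Everything stays inside intervals of length at most $\ell$, so all needed orthogonalities hold under \eqref{eq:global-vanishing} via Lemma~\ref{lem 3.2}.

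For (L), let $y:=L_{r-1}^{[b,a]}x\in\mD_{[r-1,a]}$. For $a\le j\le r-1$, the unit $x\to L_j^{[b,a]}x$ is the left adjoint to $\mD_{[j,a]}\hookrightarrow\mD_{[b,a]}$. This adjoint factors as the left adjoint onto $\mD_{[r-1,a]}$ followed by the left adjoint onto $\mD_{[j,a]}$ inside $\mD_{[r-1,a]}$, since composites of left adjoints to composable inclusions are left adjoints to the composite inclusion. Uniqueness of adjoints then gives canonical natural isomorphisms $L_j^{[r-1,a]}y\cong L_j^{[b,a]}x$ compatible with the units. The maps $\lambda_j$ are characterized as the unique factorizations through units, so they correspond under these isomorphisms. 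The elementary $(\Sigma^j\mA,\mD_{[j-1,a]})$-decompositions of the identified objects therefore agree by uniqueness, which yields $H_j^{[r-1,a]}(y)\cong H_j^{[b,a]}(x)$ together with the lower layer triangles. This is the argument of Proposition~\ref{prop 3.7}(3) iterated, shifted to a general interval.

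For (U), let $u:=U_r^{[b,a]}x\in\mD_{[b,r]}$. For $r\le j\le b$, the counit $U_j^{[b,a]}x\to x$ is right adjoint to $\mD_{[b,j]}\hookrightarrow\mD_{[b,a]}$. It factors through $\mD_{[b,j]}\hookrightarrow\mD_{[b,r]}\hookrightarrow\mD_{[b,a]}$, so composing right adjoints gives $U_j^{[b,r]}u\cong U_j^{[b,a]}x$ compatibly with the counits. The maps $d_j$ are the unique maps with $e_jd_j=e_{j+1}$, and $e^u_r$ is a monomorphism-type counit in the adjoint sense (maps out of $\mD_{[b,r]}$ into $u$ and into $x$ correspond). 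Hence the $d_j$ correspond, and the cones $\Sigma^jH_j$ in \eqref{eq:upper-layer-triangle} are identified by the uniqueness of the $(\mD_{[b,j+1]},\Sigma^j\mA)$-decomposition inside $\mD_{[b,j]}$. Using the identification of $G_j$ with $H_j$ from the proof of the nested-block proposition, this gives (U).

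Compatibility with further restriction follows because every isomorphism above is the unique one compatible with units or counits, and composites of such are again unique. The step I expect to be the main obstacle is the transfer of the layer triangles, not just the objects: the third arrows $\Sigma^jH_j\to\Sigma U_{j+1}$ and $L_{j-1}\to\Sigma^{j+1}H_j$ are not determined by adjunction alone. I would handle this as in \eqref{eq:nested-comparison-vanishing}, by checking that the comparison morphisms are unique because the relevant $\mathrm{Hom}$ groups vanish. Those groups involve exponents between $1$ and $b-a+1\le\ell+1$, so the vanishing follows from \eqref{eq:global-vanishing} and Lemma~\ref{lem 3.2}.
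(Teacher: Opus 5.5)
Your proposal is correct and follows essentially the same route as the paper. For (U), you identify $U_j^{[b,r]}U_r^{[b,a]}\cong U_j^{[b,a]}$ through uniqueness of right adjoints together with their counits; for (L), you do the same with composite left adjoints and units. You then use uniqueness of the elementary $s$-torsion decompositions to identify the pure layers and their triangles. You in fact give more detail than the paper, which dismisses (L) as ``similar'' and never checks that the third arrows of the layer triangles match. Your vanishing argument settles that point: it amounts to the axiom $\mathrm{Hom}(\mathcal T,\Sigma^{-1}\mathcal F)=0$, which makes the \textup{(TR3)} fill-in unique.
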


\begin{proof}
	Suppose first that $r\leq j\leq b$.  The composite
	\[
	\mD_{[b,j]}\hookrightarrow\mD_{[b,r]}
	\hookrightarrow\mD_{[b,a]}
	\]
	has right adjoint
	\[
	U_j^{[b,r]}U_r^{[b,a]}.
	\]
	On the other hand, its right adjoint obtained directly from the block
	decomposition at the cut $j$ is $U_j^{[b,a]}$.  Uniqueness of right
	adjoints, with their counits, gives a canonical natural isomorphism
	\begin{equation}
		U_j^{[b,r]}U_r^{[b,a]}
		\xrightarrow{\ \sim\ }U_j^{[b,a]}.
		\label{eq:upper-adjoint-comparison}
	\end{equation}
	The same statement holds with $j+1$ in place of $j$.  These two
	comparisons identify the first morphisms in the corresponding upper
	layer triangles \eqref{eq:upper-layer-triangle}.  The uniqueness in
	the nested-block layer construction then identifies their pure terms,
	giving (U).  This comparison is natural because all the adjunction
	comparisons and layer triangles are natural.
	\[\begin{tikzcd}
		{U_{j+1}^{[b,a]}x} & {U_{j}^{[b,a]}x} & {\Sigma^jH_j^{[b,a]}(x)} & {} \\
		{U_{j+1}^{[b,r]}U_r^{[b,a]}x} & {U_{j}^{[b,r]}U_r^{[b,a]}x} & {\Sigma^jH_j^{[b,r]}U_r^{[b,a]}(x)} & {}
		\arrow[from=1-1, to=1-2]
		\arrow["\cong"', from=1-1, to=2-1]
		\arrow[from=1-2, to=1-3]
		\arrow["\cong"', from=1-2, to=2-2]
		\arrow["{+1}", from=1-3, to=1-4]
		\arrow["\cong"', dashed, from=1-3, to=2-3]
		\arrow[from=2-1, to=2-2]
		\arrow[from=2-2, to=2-3]
		\arrow["{+1}", from=2-3, to=2-4]
	\end{tikzcd}\]
	The proof of remaining canonical natural isomorphism can be similarly obtained.
\end{proof}

We now formalize the possible recursive extraction paths.  An
\emph{interval decomposition tree} for $[b,a]$ is a planar full
binary tree whose leaves, from left to right, are labelled
\[
b,b-1,\ldots,a,
\]
and whose vertices correspond to consecutive subintervals.  More
precisely, the one-leaf interval $[a,a]$ has one tree.  If $a<b$, a
tree is obtained by choosing a root cut $a<r\leq b$, a tree for
$[b,r]$, and a tree for $[r-1,a]$.  Hence
\begin{equation}
	\Tree_{[b,a]}
	=\coprod_{r=a+1}^{b}
	\Tree_{[b,r]}\times\Tree_{[r-1,a]}.
	\label{eq:tree-recursion}
\end{equation}

For $T\in\Tree_{[b,a]}$, recursively apply
\eqref{eq:block-decomposition} at each internal vertex of $T$.  At a
root cut $r$, apply the upper subtree to $U_r^{[b,a]}x$ and the lower
subtree to $L_{r-1}^{[b,a]}x$.  The leaf labelled $j$ produces a
functor
\[
P_{T,j}:\mD_{[b,a]}\longrightarrow\Sigma^j\mA.
\]
Define the layer functor obtained along the path $T$ by
\[
H_{T,j}:=\Sigma^{-j}P_{T,j}:
\mD_{[b,a]}\longrightarrow\mA.
\]

\begin{prop}
	\label{prop:catalan-path-independence}
	Let $a\leq b$ with $b-a\leq\ell$.
	\begin{enumerate}
		\item The number of interval decomposition trees for $[b,a]$ is
		\[
		\lvert\Tree_{[b,a]}\rvert
		=C_{b-a}
		=\frac{1}{b-a+1}\binom{2(b-a)}{b-a}.
		\]
		In particular, the window $\mD_\ell=\mD_{[\ell,0]}$ has
		$C_\ell$ recursive binary extraction schemes.
		
		\item For every $T\in\Tree_{[b,a]}$ and every $a\leq j\leq b$,
		there is a canonical natural isomorphism
		\[
		\eta_{T,j}:H_{T,j}
		\xrightarrow{\ \sim\ }H_j^{[b,a]}.
		\]
		Consequently, every recursive binary extraction scheme produces the
		same layer homology functors, up to canonical natural isomorphism.
		
		\item These comparisons are coherent.  If $S,T\in\Tree_{[b,a]}$,
		put
		\[
		\eta_{S,T,j}:=\eta_{T,j}^{-1}\eta_{S,j}:
		H_{S,j}\xrightarrow{\ \sim\ }H_{T,j}.
		\]
		Then
		\[
		\eta_{T,T,j}=1,
		\qquad
		\eta_{T,R,j}\eta_{S,T,j}=\eta_{S,R,j}
		\]
		for all $R,S,T$.  Thus the comparison is independent of a chosen
		sequence of associativity rotations between the two trees.
	\end{enumerate}
\end{prop}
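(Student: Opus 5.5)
Part (1) follows from the recursion \eqref{eq:tree-recursion}. Write $N_k:=\lvert\Tree_{[b,a]}\rvert$ for $b-a=k$; this is legitimate because the recursion depends only on interval lengths, as suspension identifies $\Tree_{[b,a]}$ with $\Tree_{[b-a,0]}$. Then $N_0=1$, and the root cut $r$ splits $[b,a]$ into pieces of lengths $b-r$ and $r-1-a$, which sum to $k-1$. Hence $N_k=\sum_{s=0}^{k-1}N_sN_{k-1-s}$, which is Segner's recurrence. Its unique solution with $N_0=1$ is the Catalan sequence $C_k$, for instance via the generating function $G(t)=1+tG(t)^2$. Taking $[\ell,0]$ gives the statement about $\mD_\ell$.

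For part (2) I would construct $\eta_{T,j}$ by induction on the number of leaves of $T$, following \eqref{eq:tree-recursion}. For the one-leaf tree $[a,a]$, the functor $P_{T,a}$ is the identity of $\Sigma^a\mA$. By Proposition~\ref{prop 3.7}(1), suspended to the interval $[a,a]$, we have $H_a^{[a,a]}\cong\Sigma^{-a}$, and $\eta$ is this canonical identification. Now let $T$ have root cut $r$ with subtrees $T_U\in\Tree_{[b,r]}$ and $T_L\in\Tree_{[r-1,a]}$. By definition $P_{T,j}=P_{T_U,j}U_r^{[b,a]}$ if $j\geq r$, and $P_{T,j}=P_{T_L,j}L_{r-1}^{[b,a]}$ if $j<r$. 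I would define
\[
\eta_{T,j}:=\mathrm{(U)}\circ\bigl(\eta_{T_U,j}U_r^{[b,a]}\bigr)\quad(j\geq r),\qquad
\eta_{T,j}:=\mathrm{(L)}\circ\bigl(\eta_{T_L,j}L_{r-1}^{[b,a]}\bigr)\quad(j<r),
\]
where (U) and (L) are the canonical isomorphisms of Lemma~\ref{lem:block-restriction}, applied to the intervals $[b,r]$ and $[r-1,a]$, which still have length at most $\ell$. Naturality holds because whiskering a natural isomorphism by a functor and composing natural isomorphisms both preserve naturality. Canonicity holds because every ingredient is determined by the uniqueness of $s$-torsion decompositions, and no octahedra are chosen.

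For part (3), once each $\eta_{T,j}$ is an isomorphism to a \emph{fixed} target $H_j^{[b,a]}$, the identities $\eta_{T,T,j}=1$ and $\eta_{T,R,j}\eta_{S,T,j}=\eta_{T,j}^{-1}\eta_{S,j}^{-1}{}^{-1}\cdots$ are formal. Indeed $\eta_{R,j}^{-1}\eta_{T,j}\eta_{T,j}^{-1}\eta_{S,j}=\eta_{R,j}^{-1}\eta_{S,j}$. So coherence is automatic, and the real content is to justify the phrase ``independent of a chosen sequence of associativity rotations.'' For this I would check that a single rotation $(A(BC))\leftrightarrow((AB)C)$ induces the comparison $\eta_{T,j}^{-1}\eta_{S,j}$. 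That reduces to the compatibility clause of Lemma~\ref{lem:block-restriction}: the composite of restriction isomorphisms along nested intervals $[b,r]\supseteq[b,r']$, or $[r-1,a]\supseteq[r'-1,a]$, or a mixed pair $U$ then $L$, agrees with the direct restriction. This in turn comes from uniqueness of adjoints with counits and units. The mixed case $L^{[b,r]}_{r'-1}U^{[b,a]}_r\cong U^{[r'-1,a]}_rL^{[b,a]}_{r'-1}$ is where I expect the main obstacle. Both sides are the middle-interval $\mD_{[r'-1,r]}$ part of $x$, and I would identify them by showing that each is simultaneously a torsion part and a torsion-free part for the appropriate $s$-torsion pairs. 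Uniqueness of those decompositions then identifies the two sides, compatibly with the layer triangles. The vanishing needed for this uniqueness, namely Hom from $\Sigma$ of the upper block to the lower block, uses only exponents up to $\ell+1$, so it holds under \eqref{eq:global-vanishing} together with Lemma~\ref{lem 3.2}.
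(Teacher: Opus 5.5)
Your proposal is correct and follows the paper's proof: the same Catalan recurrence (you index by $b-a$, the paper by the number of leaves, which is equivalent), the same inductive definition of $\eta_{T,j}$ by whiskering the subtree comparison with $U_r^{[b,a]}$ or $L_{r-1}^{[b,a]}$ and composing with the isomorphisms (U) and (L) of Lemma~\ref{lem:block-restriction}, and the same observation that the identity and cocycle relations are formal once every $\eta_{T,j}$ lands in the fixed target $H_j^{[b,a]}$. Your extra check of the mixed interchange $L^{[b,r]}_{r'-1}U^{[b,a]}_r\cong U^{[r'-1,a]}_rL^{[b,a]}_{r'-1}$ goes beyond what the paper writes, since the paper simply asserts that a rotation does not change the comparison, and the check does go through: the octahedron on $U_{r'}x\to U_rx\to x$ exhibits $L^{[b,r]}_{r'-1}U_r^{[b,a]}x\to L_{r'-1}^{[b,a]}x\to L_{r-1}^{[b,a]}x$ as the $(\mD_{[r'-1,r]},\mD_{[r-1,a]})$-decomposition of $L_{r'-1}^{[b,a]}x$, so uniqueness identifies the two sides.
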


\begin{proof}
	Write $N_n$ for the number of trees on an interval containing $n$
	layers.  The recursive definition gives $N_1=1$ and
	\[
	N_n=\sum_{p=1}^{n-1}N_pN_{n-p}
	\qquad(n\geq2).
	\]
	This is the Catalan recurrence, so $N_n=C_{n-1}$.  Taking
	$n=b-a+1$ proves (1).
	
	We prove (2) by induction on $b-a+1$.  The assertion is the identity
	when $a=b$.  Suppose $a<b$, and let the root of $T$ be the cut $r$,
	with upper and lower subtrees $T^+$ and $T^-$.  If $r\leq j\leq b$,
	then the recursive definition, the induction hypothesis, and
	Lemma~\ref{lem:block-restriction} give
	\begin{align*}
		H_{T,j}
		&=H_{T^+,j}U_r^{[b,a]}\\
		&\xrightarrow{\sim}
		H_j^{[b,r]}U_r^{[b,a]}
		\xrightarrow{\sim}H_j^{[b,a]}.
	\end{align*}
	If $a\leq j<r$, the same argument on the lower subtree gives
	\begin{align*}
		H_{T,j}
		&=H_{T^-,j}L_{r-1}^{[b,a]}\\
		&\xrightarrow{\sim}
		H_j^{[r-1,a]}L_{r-1}^{[b,a]}
		\xrightarrow{\sim}H_j^{[b,a]}.
	\end{align*}
	These are the maps $\eta_{T,j}$.  They are natural, and the nested
	compatibility in Lemma~\ref{lem:block-restriction} shows that they do
	not depend on how the induction is written.
	
	Finally, the formulas in (3) immediately give the identity and
	cocycle relations.  Equivalently, an elementary rotation
	\[
	(X*Y)*Z\longleftrightarrow X*(Y*Z)
	\]
	changes the recursive decomposition tree but not its comparison with
	the canonical layer functors.  Since all full binary trees are
	connected by such rotations, every rotation path gives the same
	comparison.
\end{proof}

\section{Exactness of homology}\label{section 4}

The global hypothesis remains exactly \eqref{eq:global-vanishing}. If
$X\in\mD_{[r,0]}$, write its top-layer decomposition triangle as
\begin{equation}\label{eq:top-truncation}
	\Sigma^rH_r^{[r]}(X)\xrightarrow{\iota_X^r}X
	\xrightarrow{\pi_X^r}X^{<r}
	\longrightarrow\Sigma^{r+1}H_r^{[r]}(X),
	\qquad X^{<r}\in\mD_{[r-1,0]}.
\end{equation}
The superscript is suppressed whenever the ambient $\mD_r$ is clear.

\begin{thm}[Natural long exact sequence]\label{thm:main-long-exact}
	Let
	\begin{equation}\label{eq:main-triangle}
		\Delta=(X\xrightarrow fY\xrightarrow gZ\xrightarrow h\Sigma X)
	\end{equation}
	be a triangle with $X,Y,Z\in\mD_\ell$.  The construction in equation
	(\ref{eq:cross-window-boundary}) 
	gives canonical morphisms
	\[
	\delta_i^\Delta:H_i(Z)\longrightarrow H_{i-1}(X)
	\qquad(1\leq i\leq\ell).
	\]
	They make the sequence
	\begin{align*}
		0\longrightarrow H_\ell(X)&\longrightarrow H_\ell(Y)
		\longrightarrow H_\ell(Z)\xrightarrow{\delta_\ell^\Delta}
		H_{\ell-1}(X)\longrightarrow\cdots\\
		&\cdots\longrightarrow H_0(X)\longrightarrow H_0(Y)
		\longrightarrow H_0(Z)\longrightarrow0
	\end{align*}
	exact in $\mA$.  Moreover, the sequence is natural with respect to
	morphisms of triangles.
\end{thm}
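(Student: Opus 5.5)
We argue by induction on $\ell$, using the top-layer truncation \eqref{eq:top-truncation} to split off the highest layer. The case $\ell=0$ is immediate: $\mD_0=\mA$, $H_0=\mathrm{Id}$, and a triangle in $\mA$ is a short exact sequence by properness. For $\ell=1$ the statement is Proposition~\ref{prop 2.19}. We use that argument as a template, but run it with only $(E_{2})$, which is exactly what Korteg{\aa}rd assumes there.

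For the inductive step, let $\Delta$ be a triangle in $\mD_\ell$. We first study the top layer. Since $\mathrm{Hom}(\Sigma^\ell\mA,\Sigma^{-1}\mD_{\ell-1})=0$ and $\mathrm{Hom}(\Sigma^\ell\mA,\mD_{\ell-1})=0$ by Lemma~\ref{lem 3.2} and $(E_{\ell+1})$, the truncation triangles are functorial, and $f$ induces $\Sigma^\ell H_\ell(f)$ and $f^{<\ell}$.

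We want a triangle $\Sigma^\ell H_\ell(X)\to\Sigma^\ell H_\ell(Y)\to W\to$ and a triangle $X^{<\ell}\to Y^{<\ell}\to Z'\to$ that are related to $Z$ via the $3\times3$ lemma. The standard approach is to construct, by the octahedral axiom applied to $X\to Y\to Z$ together with the truncations, a cross-window object. We then show that $\ker H_\ell(g)$, $\mathrm{coker}$ and so on are computed by the $s$-torsion decomposition.

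Concretely, the composite $\Sigma^\ell H_\ell(Z)\xrightarrow{\iota_Z}Z\xrightarrow{h}\Sigma X\xrightarrow{\Sigma\pi_X}\Sigma X^{<\ell}$ lands in $\Sigma\mD_{\ell-1}=\mD_{[\ell,1]}$. Projecting onto the top layer $\Sigma^\ell H_{\ell-1}(X^{<\ell})\cong\Sigma^\ell H_{\ell-1}(X)$, via Proposition~\ref{prop 3.7}(3) and the upper block decomposition of $\Sigma X^{<\ell}$, gives a map $\Sigma^\ell H_\ell(Z)\to\Sigma^\ell H_{\ell-1}(X)$. Its desuspension is $\delta_\ell^\Delta$; this is the cross-window boundary \eqref{eq:cross-window-boundary}. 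The lower connecting maps $\delta_i^\Delta$ for $i<\ell$ come by induction from a triangle in $\mD_{\ell-1}$ relating $X^{<\ell},Y^{<\ell}$ and a suitable truncation of $Z$.

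Naturality of every map is automatic from the uniqueness of $s$-torsion decompositions and the vanishing \eqref{eq:nested-comparison-vanishing}, exactly as in the proof of the nested block square. In particular, $\delta$ does not depend on the chosen octahedra.

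The main obstacle is the gluing. The triangle $X^{<\ell}\to Y^{<\ell}\to ?$ need not have cone in $\mD_{\ell-1}$: its cone lies in $\mD_{\ell-1}*\Sigma\mD_{\ell-1}$, which reaches $\Sigma^\ell\mA$. The plan is to take the octahedron on $\Sigma^\ell H_\ell(Y)\to Y\to Y^{<\ell}$ composed with $g$, and identify $Z^{<\ell}$ with the $\mD_{\ell-1}$-part of a cone. In doing so we check that the kernel and cokernel of $H_\ell(f)$ and $H_\ell(g)$, and the image of $\delta_\ell$, are realized by short exact sequences in $\mA$ via properness. This is where the proof of Lemma~\ref{lem 5.1} (image/kernel/cokernel octahedra) is reused.

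The vanishings needed in this step are $\mathrm{Hom}(\Sigma^p\mA,\mA)$ with $p\le\ell+1$. Each step compares an object of $\Sigma\mD_{[\ell,0]}$ with one of $\mD_{[\ell,0]}$ along a single shift, so the largest gap is $\ell+1$. The main work is to track this bookkeeping carefully, so that $(E_{\ell+2})$ and $(E_{\ell+3})$ are never invoked. Exactness at each term then reduces to exactness of the six-term pieces plus the inductive hypothesis, spliced at $H_{\ell-1}(X)$ via the identification $H_{\ell-1}(X)\cong H_{\ell-1}(X^{<\ell})$.
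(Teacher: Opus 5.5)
There is a genuine gap in the inductive step, at exactly the obstacle you flag. Your setup is fine: the truncations are functorial, and your top boundary (lifting $(\Sigma\pi_X^\ell)h\iota_Z^\ell$ through the top layer of $\Sigma X^{<\ell}$) is the characterization in Lemma~\ref{lem:edge-characterization}. But the sketched fix does not work.

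Your plan to get the lower connecting maps from ``a triangle in $\mD_{\ell-1}$ relating $X^{<\ell}$, $Y^{<\ell}$ and a truncation of $Z$'' fails in principle. If a triangle $X^{<\ell}\xrightarrow{f^{<\ell}}Y^{<\ell}\to Z'\to\Sigma X^{<\ell}$ with $Z'\in\mD_{\ell-1}$ existed, the inductive hypothesis would make $H_{\ell-1}(f)$ a monomorphism. That is false whenever $\delta_\ell^\Delta\neq0$. In the paper's example $\Sigma S_1\to\Sigma^2S_2\to\Sigma^2P\to\Sigma^2S_1$ with $\ell=2$, one has $X^{<2}=\Sigma S_1$ and $Y^{<2}=Z^{<2}=0$, so $H_1(f)\colon S_1\to0$ is not monic.

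The reason is that the cone of $f^{<\ell}$ is forced to be the object $N$ of the $3\times3$ construction. Its top layer is $J=\mathrm{Coker}\,H_\ell(g)$. Identifying $Z^{<\ell}$ with the lower part of $N$ does not turn this into a triangle in $\mD_{\ell-1}$. For the same reason, ``splicing at $H_{\ell-1}(X)$'' is not where the pieces meet.

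The missing ideas are the two reductions in Propositions~\ref{prop:top-four-term} and~\ref{prop:relative-snake}.

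First, put $\Gamma=(X^{<\ell}\to Y^{<\ell}\to N\to\Sigma X^{<\ell})$. One shows $\delta_\ell^\Delta=\delta_\ell^\Gamma q_J$ and that $\delta_\ell^\Gamma$ is monic (Lemma~\ref{lem:edge-monomorphism}). Monicity holds because the first two terms of $\Gamma$ have no top layer and $\HomC(\Sigma^\ell\mA,\mD_{\ell-1})=0$. This yields $\mathrm{Ker}\,\delta_\ell^\Delta=\mathrm{Im}\,H_\ell(g)$, which your appeal to ``realizing kernels and cokernels via properness'' does not supply.

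Second, one modifies the \emph{first} term, not the third. An octahedron on $\Sigma^{\ell-1}J\to\Sigma^{\ell-1}H_{\ell-1}(X^{<\ell})\to X^{<\ell}$ replaces $X^{<\ell}$ by the cone $P$. This $P$ has $H_{\ell-1}(P)\cong K:=\mathrm{Coker}\,\delta_\ell^\Gamma$ and the same lower layers. A second octahedron, on $\Sigma^\ell J\to N\to\Sigma X^{<\ell}$, then gives a genuine triangle $P\to Y^{<\ell}\to Z^{<\ell}\to\Sigma P$ in $\mD_{\ell-1}$, to which induction applies. The pieces are spliced at $J$ and at $K$.

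Finally, lower boundaries defined through auxiliary octahedra are not automatically independent of choices. Nor do they automatically equal the canonical $\delta_i^\Delta$ of \eqref{eq:cross-window-boundary}. The paper avoids this by defining every $\delta_i$ through the cross-window maps of Lemma~\ref{lem:cross-window}. It then transports them along the comparison morphisms of triangles to identify the spliced sequence with the canonical one.
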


The connecting maps are constructed next from the same block
$s$-torsion decompositions that define the $H_i$.  Exactness is then
proved in three steps, and naturality is proved separately. 

\begin{remark}
	For $\ell=1$, the hypothesis \eqref{eq:global-vanishing} is precisely $(E_2)$,
	and Theorem~\ref{thm:main-long-exact} recovers Korteg{\aa}rd's six-term exact sequence
	\cite[Lemma~3.3]{Kortegaard2025}.
\end{remark}

\subsection{Layer maps between adjacent shifted windows}

Fix $1\leq r\leq\ell$ and put
\[
\mD_r^+:=\Sigma\mD_r=\mD_{[r+1,1]}.
\]
Shifting the block decompositions of Section~\ref{section 3} gives, for
$Y\in\mD_r^+$,
\begin{align*}
	U_i^{+,[r]}(Y)&:=\Sigma U_{i-1}^{[r]}(\Sigma^{-1}Y),\\
	L_{i-1}^{+,[r]}(Y)&:=\Sigma L_{i-2}^{[r]}(\Sigma^{-1}Y),\\
	H_i^{+,[r]}(Y)&:=H_{i-1}^{[r]}(\Sigma^{-1}Y),
\end{align*}
with the evident endpoint conventions.  In particular, there are natural
shift isomorphisms
\begin{equation}\label{eq:shifted-layer-iso}
	\sigma_{i,x}:H_i^{+,[r]}(\Sigma x)\xrightarrow{\sim} H_{i-1}^{[r]}(x)
	\qquad(1\leq i\leq r).
\end{equation}

\begin{lem}\label{lem:cross-window}
	Every morphism
	\[
	u:x\longrightarrow y,
	\qquad x\in\mD_r,\quad y\in\mD_r^+,
	\]
	induces, for $1\leq i\leq r$, a unique morphism
	\[
	H_i^\rightarrow(u):H_i^{[r]}(x)\longrightarrow H_i^{+,[r]}(y).
	\]
	The assignment is additive and natural in commutative squares from
	$\mD_r$ to $\mD_r^+$.  It is compatible with the zero-top-layer
	identifications of Proposition \ref{prop 3.7}.
\end{lem}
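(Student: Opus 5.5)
The plan is to define $H_i^\rightarrow(u)$ by a two-step factorization through the block decompositions of $x$ and $y$, with uniqueness coming each time from a Hom-vanishing supplied by \eqref{eq:global-vanishing} and Lemma~\ref{lem 3.2}. I deliberately avoid working inside the combined window $\mD_{[r+1,0]}$: it has $r+2\leq\ell+2$ layers, so Proposition~\ref{prop 3.3} need not apply to it.

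Fix $1\leq i\leq r$. On the $x$ side I use the upper layer triangle \eqref{eq:upper-layer-tower}, namely $U_{i+1}(x)\xrightarrow{d_i}U_i(x)\xrightarrow{\beta_i}\Sigma^iH_i(x)\to\Sigma U_{i+1}(x)$ together with the counit $e_i^x\colon U_i(x)\to x$. On the $y$ side I use the shifted lower layer triangle $\Sigma^iH_i^{+}(y)\xrightarrow{\alpha}L_i^{+}(y)\xrightarrow{\lambda}L_{i-1}^{+}(y)\to\Sigma^{i+1}H_i^+(y)$ together with the unit $p_i^y\colon y\to L_i^+(y)$. Here $L_i^+(y)\in\mD_{[i,1]}$ and $L_{i-1}^+(y)\in\mD_{[i-1,1]}$, with $L_0^+=0$. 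Consider the composite
\[
\theta:=p_i^y\,u\,e_i^x\colon U_i(x)\longrightarrow L_i^+(y).
\]

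\emph{Step 1: descend along $\beta_i$.} First, $\theta d_i=0$, because $\mathrm{Hom}_{\mC}(\mD_{[r,i+1]},\mD_{[i,1]})=0$: the exponents $p-q$ that occur range over $1,\dots,r-1\leq\ell$. Hence $\theta$ factors as $\theta=\varphi\beta_i$ for some $\varphi\colon\Sigma^iH_i(x)\to L_i^+(y)$. The factorization is unique because the ambiguity lies in $\mathrm{Hom}_{\mC}(\Sigma U_{i+1}(x),L_i^+(y))$, which is a Hom from $\mD_{[r+1,i+2]}$ to $\mD_{[i,1]}$. The exponents there range from $2$ to $r\leq\ell+1$, so this group vanishes.

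\emph{Step 2: lift along $\alpha$.} The composite $\lambda\varphi$ lies in $\mathrm{Hom}_{\mC}(\Sigma^i\mA,\mD_{[i-1,1]})$, whose exponents are $1,\dots,i-1$, so it vanishes. Thus $\varphi=\alpha\psi$ for some $\psi\colon\Sigma^iH_i(x)\to\Sigma^iH_i^+(y)$. The lift is unique because the ambiguity lies in $\mathrm{Hom}_{\mC}(\Sigma^i\mA,\Sigma^{-1}\mD_{[i-1,1]})$, whose exponents are $2,\dots,i$. Set $H_i^\rightarrow(u):=\Sigma^{-i}\psi$. Equivalently, $H_i^\rightarrow(u)$ is the unique map making
\[
\alpha\,\Sigma^i H_i^\rightarrow(u)\,\beta_i=p_i^y\,u\,e_i^x
\]
hold, and this characterization is what drives everything else.

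Additivity follows because $u\mapsto p_i^yue_i^x$ is additive and the characterizing equation is linear, so uniqueness applies. For naturality, take a commutative square in which $a\colon x\to x'$ is a morphism in $\mD_r$, $b\colon y\to y'$ is a morphism in $\mD_r^+$, and $u'a=bu$. The naturality of $e_i$, $\beta_i$, $\alpha$ and $p_i$ shows that both $H_i^+(b)H_i^\rightarrow(u)$ and $H_i^\rightarrow(u')H_i(a)$ satisfy the characterizing equation for the common composite $x\to y'$, so they agree by uniqueness. For compatibility with the zero-top-layer identifications, Proposition~\ref{prop 3.7}(2) and its shifted form identify $U_i$, $L_i$, $e_i$, $p_i$, $\alpha$ and $\beta$ across windows, so the characterizing equation is transported and uniqueness again gives the claim.

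The main obstacle is the exponent bookkeeping in the two uniqueness statements. In the worst case, in Step 1 with $i=1$, the pure layers $\Sigma^{r+1}\mA$ and $\Sigma\mA$ meet and require exponent $r\leq\ell$, which is fine. The real danger is to run the argument in the other order, by first lifting $u\,e_i^x$ along $\mD_{[r,i]}$-type objects and only then descending. That order can demand vanishing in degree $\ell+2$, so I would check at each step that no Hom group with exponent beyond $\ell+1$ is ever used.
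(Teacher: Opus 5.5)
Your proof is correct, but it follows a different route from the paper's.

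\textbf{The paper's route.} The paper works only with upper towers. It applies $\mathrm{Hom}_{\mC}(U_i^{[r]}(x),-)$ to the cut-$i$ block triangle of $y$. This gives a unique lift $u_i\colon U_i^{[r]}(x)\to U_i^{+,[r]}(y)$ with $e_i^+u_i=ue_i$, using vanishings with exponents in $[1,r-1]$ and $[2,r]$. It notes that the $u_i$ commute with the maps $d_i$. It then defines $H_i^\rightarrow(u)$ as the (TR3) fill-in between the two upper layer triangles. This fill-in is unique because $\mathrm{Hom}_{\mC}(\Sigma\mD_{[r,i+1]},\Sigma^i\mA)=0$.

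\textbf{Your route.} You pair the upper tower of $x$ with the lower tower of $y$. You pin the map down by the single equation $\alpha\,\Sigma^iH_i^\rightarrow(u)\,\beta_i=p_i^y\,u\,e_i^x$, using only Hom-exactness and no (TR3). The two constructions agree. Take the paper's fill-in $w$ with $w\beta_i=\beta_i^+u_i$ and post-compose with $\alpha$. The shifted nested-block square $\alpha\beta_i^+=p_i^ye_i^+$ then gives $\alpha w\beta_i=p_i^ye_i^+u_i=p_i^y u e_i^x$. So the paper's map satisfies your equation, and your uniqueness identifies the two.

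\textbf{What each route buys.} Your route gives a closed characterization. Additivity and naturality are immediate from it. It also makes Lemma~\ref{lem:edge-characterization} essentially a special case: take $u=h$ and $i=r$, where $\beta_r=1$, $e_r^Z=\iota_Z^r$, $\alpha=\Sigma\iota_{X^{<r}}^{r-1}$ and $p_r^{\Sigma X}=\Sigma\pi_X^r$. The paper's route instead produces the intermediate morphism of upper towers $(u_i)_i$. It reuses the lift $u_r$ explicitly, together with the nested-block square, to prove that lemma.

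\textbf{A mistaken side remark.} Your closing warning is wrong. The paper does run the argument in the ``other order'': it lifts first into $U_i^{+,[r]}(y)$ and then descends. Every Hom group it needs has exponent in $[1,r]\subseteq[1,\ell]$. No vanishing in degree $\ell+2$ arises in either order. This does not affect your proof, but the remark should be dropped.
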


\begin{proof}
	The following three groups vanish:
	\begin{align}
		\mathrm{Hom}_{\mC}(\mD_{[r,i]},\mD_{[i-1,1]})&=0,                    \label{eq:cross-v1}\\
		\mathrm{Hom}_{\mC}(\mD_{[r,i]},\Sigma^{-1}\mD_{[i-1,1]})&=0,        \label{eq:cross-v2}\\
		\mathrm{Hom}_{\mC}(\Sigma\mD_{[r,i+1]},\Sigma^i\mA)&=0.             \label{eq:cross-v3}
	\end{align}
	After reduction to pure layers, the positive exponents are contained in
	$[1,r-1]$, $[2,r]$, and $[2,r]$, respectively; hence
	\eqref{eq:global-vanishing} implies all three vanishings.
	
	Let
	\[
	e_i:U_i^{[r]}(x)\longrightarrow x,
	\qquad e_i^+:U_i^{+,[r]}(y)\longrightarrow y
	\]
	be the block-decomposition maps.  Apply
	$\mathrm{Hom}(U_i^{[r]}(x),-)$ to the decomposition triangle of $y$ at the cut
	$i$.  Equations \eqref{eq:cross-v1} and \eqref{eq:cross-v2} give a
	unique lift
	\begin{equation}\label{eq:unique-upper-lift}
		u_i:U_i^{[r]}(x)\longrightarrow U_i^{+,[r]}(y),
		\qquad e_i^+u_i=ue_i.
	\end{equation}
	Use the endpoint map $u_{r+1}=0$ when $i=r$.  The uniqueness in
	\eqref{eq:unique-upper-lift} shows that $u_{i+1}$ and $u_i$ commute with
	the maps $d_i$ in the two upper layer towers.
	Axiom \textup{(TR3)} therefore gives a map between their pure quotients:
	\[
	\begin{tikzcd}[column sep=large]
		U_{i+1}^{[r]}(x) \arrow[r] \arrow[d,"u_{i+1}"'] &
		U_i^{[r]}(x) \arrow[r] \arrow[d,"u_i"'] &
		\Sigma^iH_i^{[r]}(x) \arrow[r] \arrow[d,dashed] &
		\Sigma U_{i+1}^{[r]}(x) \arrow[d,"\Sigma u_{i+1}"]\\
		U_{i+1}^{+,[r]}(y) \arrow[r] &
		U_i^{+,[r]}(y) \arrow[r] &
		\Sigma^iH_i^{+,[r]}(y) \arrow[r] &
		\Sigma U_{i+1}^{+,[r]}(y).
	\end{tikzcd}
	\]
	The dashed map is unique by \eqref{eq:cross-v3}.  After shifting by
	$-i$, define it to be $H_i^\rightarrow(u)$.
	
	For a commutative square $bu=u'a$, the two induced upper lifts satisfy
	the same defining equation, hence agree.  Uniqueness of the dashed map
	then proves naturality; the same argument proves additivity.  Stability
	under adding a zero top layer follows from the uniqueness statement in
	Proposition \ref{prop 3.7}.
\end{proof}

For a triangle
\begin{equation}\label{eq:cross-window-triangle}
	\Delta=(X\xrightarrow fY\xrightarrow gZ\xrightarrow h\Sigma X)
\end{equation}
in $\mD_r$, define
\begin{equation}\label{eq:cross-window-boundary}
	\delta_i^\Delta
	:=\sigma_{i,X}\circ H_i^\rightarrow(h)
	:H_i^{[r]}(Z)\longrightarrow H_{i-1}^{[r]}(X),
	\qquad1\leq i\leq r.
\end{equation}

\subsection{Exactness and naturality}
\mbox{}\par\nobreak\smallskip

\noindent\textbf{Step 1: Top-layer four-term exactness.}
\par\smallskip

\begin{lem}\label{lem:top-representability}
	For $0\leq r\leq\ell$, $X\in\mD_{[r,0]}$, and $a\in\mA$,
	composition with $\iota_X^r$ induces a natural isomorphism
	\[
	\HomA(a,H_r(X))\xrightarrow{\sim}\HomC(\Sigma^ra,X).
	\]
	Moreover,
	\[
	\HomC(\Sigma^{r+1}\mA,\mD_{[r,0]})=0.
	\]
\end{lem}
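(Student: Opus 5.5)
I would prove the vanishing assertion first, since the representability statement depends on it. By Lemma \ref{lem 3.2} it suffices to check pure layers. For $0\leq q\leq r$ we have
\[
\HomC(\Sigma^{r+1}\mA,\Sigma^q\mA)\cong\HomC(\Sigma^{r+1-q}\mA,\mA).
\]
The exponent satisfies $1\leq r+1-q\leq r+1\leq\ell+1$, so the group vanishes by \eqref{eq:global-vanishing}. This gives $\HomC(\Sigma^{r+1}\mA,\mD_{[r,0]})=0$. In the case $r=0$ the statement is simply $\HomC(\Sigma\mA,\mA)=0$.

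For the isomorphism, I apply $\HomC(\Sigma^ra,-)$ to the top-layer triangle \eqref{eq:top-truncation}. This yields the exact sequence
\[
\HomC(\Sigma^ra,\Sigma^{-1}X^{<r})\to\HomC(\Sigma^ra,\Sigma^rH_r(X))\xrightarrow{\iota_X^r\circ-}\HomC(\Sigma^ra,X)\to\HomC(\Sigma^ra,X^{<r}).
\]
Since $X^{<r}\in\mD_{[r-1,0]}$, the two outer terms reduce on pure layers to
\[
\HomC(\Sigma^{r-q}\mA,\mA)\quad\text{and}\quad\HomC(\Sigma^{r-q+1}\mA,\mA),\qquad 0\leq q\leq r-1.
\]
The exponents lie in $[1,r]$ and $[2,r+1]$ respectively. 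Both lie within $[1,\ell+1]$, so both terms vanish by \eqref{eq:global-vanishing} together with Lemma \ref{lem 3.2}. Hence composition with $\iota_X^r$ is bijective. Finally, $\Sigma^r$ is an autoequivalence, and $\mA$ is full in $\mC$. So
\[
\HomC(\Sigma^ra,\Sigma^rH_r(X))\cong\HomC(a,H_r(X))=\HomA(a,H_r(X)),
\]
and composing gives the claimed map $\varphi\mapsto\iota_X^r\circ\Sigma^r\varphi$. For $r=0$ the triangle is $H_0(X)\cong X\to0$ and the statement is trivial.

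For naturality in $a$, the map is given by precomposition, so it is clearly natural. Naturality in $X$ uses the naturality of the top-layer triangle from Proposition \ref{prop 3.7}(3): for $u\colon X\to X'$ we have $u\,\iota_X^r=\iota_{X'}^r\,\Sigma^rH_r(u)$, which is exactly the needed compatibility.

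The only real obstacle is the exponent bookkeeping, in particular checking that the term $\Sigma^{-1}X^{<r}$ needs at most degree $r\leq\ell$ and that the other term needs at most $r+1\leq\ell+1$. This is precisely where the hypothesis $(E_{\ell+1})$ rather than $(E_\ell)$ enters, and nothing else is delicate.
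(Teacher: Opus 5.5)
Your proof is correct and is essentially the paper's: the paper cites the adjunction of the $s$-torsion pair $(\Sigma^r\mA,\mD_{[r-1,0]})$, which you reprove by applying $\HomC(\Sigma^r a,-)$ to the top-layer triangle, and the vanishing part is the same pure-layer check via Lemma~\ref{lem 3.2}. One cosmetic slip is in your final paragraph, where the degree bookkeeping is swapped: $\HomC(\Sigma^ra,\Sigma^{-1}X^{<r})$ is the term needing exponents up to $r+1$, while $\HomC(\Sigma^ra,X^{<r})$ needs only up to $r$, and this does not affect the argument.
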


\begin{proof}
	The first assertion is the adjunction attached to
	$(\Sigma^r\mA,\mD_{[r-1,0]})$.  For the second, test the pure layers:
	\[
	\HomC(\Sigma^{r+1}\mA,\Sigma^j\mA)
	=\HomC(\Sigma^{r+1-j}\mA,\mA)=0
	\qquad(0\leq j\leq r),
	\]
	and use Lemma \ref{lem 3.2}.
\end{proof}

Let
\begin{equation}\label{eq:level-r-triangle}
	\Delta_r=(X\xrightarrow fY\xrightarrow gZ\xrightarrow h\Sigma X)
\end{equation}
be a triangle in $\mD_{[r,0]}$, where $1\leq r\leq\ell$.

\begin{lem}\label{lem:edge-characterization}
	With the standard Verdier rotation convention, the top boundary from
	\eqref{eq:cross-window-boundary} is characterized by
	\begin{equation}\label{eq:edge-characterization}
		\iota_{X^{<r}}^{r-1}\Sigma^{r-1}\delta_r^{\Delta_r}
		=\pi_X^r\,\Sigma^{-1}(h\iota_Z^r).
	\end{equation}
	Here $H_{r-1}(X^{<r})\cong H_{r-1}(X)$ is the natural comparison from
	Section~\ref{section 3}.  In particular,
	\eqref{eq:edge-characterization} uniquely determines
	$\delta_r^{\Delta_r}$.
\end{lem}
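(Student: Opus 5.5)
We unwind the definition $\delta_r^{\Delta_r}=\sigma_{r,X}\circ H_r^\rightarrow(h)$ from Lemma~\ref{lem:cross-window} at the top index $i=r$, and then compare the resulting square with the top-layer triangles \eqref{eq:top-truncation} of $Z$ and of $X$. At $i=r$ the endpoint convention gives $u_{r+1}=0$. With the normalization \eqref{eq:top-endpoint-normalization} we have $U_r^{[r]}(Z)=\Sigma^rH_r(Z)$ and $e_r^Z=\iota_Z^r$. Hence the unique lift $h_r\colon\Sigma^rH_r(Z)\to U_r^{+,[r]}(\Sigma X)$ of \eqref{eq:unique-upper-lift} satisfies
\[
e_r^{+}h_r=h\,\iota_Z^r .
\]
The dashed map in the (TR3) diagram is then just $h_r$ composed with the pure quotient $U_r^{+,[r]}(\Sigma X)\to\Sigma^rH_r^{+,[r]}(\Sigma X)$. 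By the shift definitions, $U_r^{+,[r]}(\Sigma X)=\Sigma U_{r-1}^{[r]}(X)$, and its pure quotient is $\Sigma\beta_{r-1}^X$ onto $\Sigma\Sigma^{r-1}H_{r-1}(X)$; via $\sigma_{r,X}$ this is exactly where $\Sigma^r\delta_r$ lands. So
\[
\Sigma^r\delta_r=\Sigma\beta_{r-1}^X\circ h_r
\qquad\text{(up to the identification }\sigma_{r,X}).
\]

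Next we desuspend and pass to $X^{<r}$. Applying $\Sigma^{-1}$ gives
\[
\Sigma^{-1}(h\iota_Z^r)=e_{r-1}^X\,\Sigma^{-1}h_r .
\]
Composing with $\pi_X^r=p_{r-1}^X$ and using the nested square \eqref{eq:nested-block-square} at $i=r-1$, namely $p_{r-1}^Xe_{r-1}^X=\alpha_{r-1}^X\beta_{r-1}^X$, we get
\[
\pi_X^r\Sigma^{-1}(h\iota_Z^r)=\alpha_{r-1}^X\beta_{r-1}^X\Sigma^{-1}h_r=\alpha_{r-1}^X\Sigma^{r-1}\delta_r .
\]
By Proposition~\ref{prop 3.7}(2),(3), the map $\alpha_{r-1}^X\colon\Sigma^{r-1}H_{r-1}(X)\to L_{r-1}^{[r]}(X)=X^{<r}$ is identified with $\iota^{r-1}_{X^{<r}}$ under the comparison $H_{r-1}(X)\cong H_{r-1}(X^{<r})$, since $X^{<r}\in\mD_{r-1}$ and its top-layer triangle is the lower-layer triangle of $X$ at $i=r-1$. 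This yields \eqref{eq:edge-characterization}. The sign conventions (standard Verdier rotation) enter only through the identification $U_r^{+,[r]}(\Sigma X)=\Sigma U_{r-1}^{[r]}(X)$ and its counit; we take the counit of the shifted window to be $\Sigma e_{r-1}^X$ with no sign, consistent with the definition of the shifted functors.

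For uniqueness, Lemma~\ref{lem:top-representability} applied to $X^{<r}\in\mD_{[r-1,0]}$ says that composition with $\iota^{r-1}_{X^{<r}}$ gives a bijection $\HomA(H_r(Z),H_{r-1}(X^{<r}))\cong\HomC(\Sigma^{r-1}H_r(Z),X^{<r})$. Hence \eqref{eq:edge-characterization} determines $\delta_r^{\Delta_r}$.

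The main obstacle is bookkeeping rather than substance. One has to check carefully that the identifications $\sigma_{r,X}$, the endpoint normalization, and the comparison of Proposition~\ref{prop 3.7}(3) are mutually compatible, so that $\alpha^X_{r-1}$ really becomes $\iota^{r-1}_{X^{<r}}$ and no stray automorphism or sign appears. I would handle this by invoking uniqueness of each comparison map, each of which is forced by the vanishing \eqref{eq:global-vanishing}, rather than by tracking them explicitly.
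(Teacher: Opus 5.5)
Your proposal is correct and follows the paper's proof essentially step for step. You use the unique lift $u_r\colon\Sigma^rH_r(Z)\to\Sigma U_{r-1}^{[r]}(X)$ with $(\Sigma e_{r-1}^X)u_r=h\iota_Z^r$, read off $\Sigma^{r-1}\delta_r^{\Delta_r}=\beta_{r-1}^X\Sigma^{-1}u_r$ from the (TR3) square (using $U_{r+1}^{[r]}=0$ and $\beta_r^Z=1$), apply the nested-block square at $i=r-1$ with $p_{r-1}^X=\pi_X^r$ and $\alpha_{r-1}^X=\iota^{r-1}_{X^{<r}}$, and get uniqueness from Lemma~\ref{lem:top-representability} applied to $X^{<r}$; your appeal to Proposition~\ref{prop 3.7} only spells out the identification $\alpha_{r-1}^X=\iota^{r-1}_{X^{<r}}$ that the paper asserts directly.
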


\begin{proof}
	At the cut $r$, the lift \eqref{eq:unique-upper-lift} is a map
	\[
	u_r:\Sigma^rH_r(Z)=U_r^{[r]}(Z)\longrightarrow
	\Sigma U_{r-1}^{[r]}(X)
	\]
	such that $(\Sigma e_{r-1}^X)u_r=h\iota_Z^r$.  Put
	$\bar u_r:=\Sigma^{-1}u_r$.  The defining morphism of upper layer
	triangles gives
	\[
	\Sigma^{r-1}\delta_r^{\Delta_r}=\beta_{r-1}^X\bar u_r.
	\]
	Apply the nested-block square \eqref{eq:nested-block-square} with
	$i=r-1$; there $p_{r-1}^X=\pi_X^r$ and
	$\alpha_{r-1}^X=\iota_{X^{<r}}^{r-1}$.  We obtain
	\begin{align*}
		\iota_{X^{<r}}^{r-1}\Sigma^{r-1}\delta_r^{\Delta_r}
		&=\alpha_{r-1}^X\beta_{r-1}^X\bar u_r\\
		&=\pi_X^r e_{r-1}^X\bar u_r
		=\pi_X^r\Sigma^{-1}(h\iota_Z^r),
	\end{align*}
	which proves the identity.  Lemma~\ref{lem:top-representability} applied to
	$X^{<r}\in\mD_{r-1}$ proves uniqueness.  A different global rotation
	convention changes all connecting maps by the corresponding standard
	sign and does not affect exactness or naturality.
\end{proof}

\begin{lem}\label{lem:edge-monomorphism}
	If $X,Y\in\mD_{[r-1,0]}$ in \eqref{eq:level-r-triangle}, then
	$\delta_r^{\Delta_r}$ is monic.
\end{lem}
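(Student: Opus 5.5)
The plan is to use the characterization of Lemma~\ref{lem:edge-characterization} to turn monicity into a vanishing statement in $\mC$, and then to prove that vanishing with the triangle $\Delta_r$. When $X\in\mD_{[r-1,0]}$, the top layer $H_r(X)=0$ by Proposition~\ref{prop 3.7}(4). So $\iota^r_X=0$, $\pi_X^r$ is an isomorphism, and we may identify $X^{<r}=X$. Identity \eqref{eq:edge-characterization} then reads
\[
\iota_X^{r-1}\,\Sigma^{r-1}\delta_r^{\Delta_r}=\Sigma^{-1}(h\iota_Z^r).
\]
Let $\varphi\colon a\to H_r(Z)$ be a morphism in $\mA$ with $\delta_r^{\Delta_r}\varphi=0$. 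We must show $\varphi=0$. By Lemma~\ref{lem:top-representability}, it suffices to show that $\iota_Z^r\Sigma^r\varphi\colon\Sigma^r a\to Z$ vanishes.

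The first step is to show that $h\,\iota_Z^r\,\Sigma^r\varphi=0$. Shifting the displayed identity by $\Sigma$ gives
\[
h\,\iota_Z^r\,\Sigma^r\varphi=\Sigma\bigl(\iota_X^{r-1}\Sigma^{r-1}(\delta_r^{\Delta_r}\varphi)\bigr)=0.
\]

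The second step uses the triangle $\Delta_r$. Since $h\circ(\iota_Z^r\Sigma^r\varphi)=0$, the morphism $\iota_Z^r\Sigma^r\varphi$ factors through $g$ as $g\psi$ for some $\psi\colon\Sigma^r a\to Y$. But $Y\in\mD_{[r-1,0]}$, and by Lemma~\ref{lem 3.2} together with \eqref{eq:global-vanishing} we have
\[
\HomC(\Sigma^r\mA,\Sigma^j\mA)=\HomC(\Sigma^{r-j}\mA,\mA)=0\qquad(0\le j\le r-1),
\]
because the exponents $r-j$ lie in $[1,r]\subseteq[1,\ell+1]$. This is also the statement of Lemma~\ref{lem:top-representability} applied to $Y\in\mD_{r-1}$. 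Hence $\psi=0$, so $\iota_Z^r\Sigma^r\varphi=0$, and therefore $\varphi=0$ by the representability isomorphism.

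The only delicate point is bookkeeping rather than mathematics. The identification $X^{<r}\cong X$ must be compatible with the comparison $H_{r-1}(X^{<r})\cong H_{r-1}(X)$ and with $\iota^{r-1}$. This is exactly the content of the zero-top-layer identifications of Proposition~\ref{prop 3.7}(2),(3) and the compatibility clause of Lemma~\ref{lem:cross-window}, so I will cite them rather than recompute. Sign conventions do not matter, since only vanishing is used.
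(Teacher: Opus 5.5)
Your proposal is correct and is essentially the paper's proof. The edge characterization \eqref{eq:edge-characterization}, with $\pi_X^r$ invertible because $H_r(X)=0$, gives $h\iota_Z^r\Sigma^r\varphi=0$. Exactness of $\mathrm{Hom}_{\mC}(\Sigma^r a,-)$ on $\Delta_r$ then lifts $\iota_Z^r\Sigma^r\varphi$ through $g$ to a map $\Sigma^r a\to Y$, which vanishes because $\mathrm{Hom}_{\mC}(\Sigma^r\mA,\mD_{[r-1,0]})=0$, and Lemma~\ref{lem:top-representability} forces $\varphi=0$.
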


\begin{proof}
	Let $u:a\to H_r(Z)$ satisfy $\delta_r^{\Delta_r}u=0$.  Since the
	top-layer decomposition of $X$ is trivial, \eqref{eq:edge-characterization}
	gives $h\iota_Z^r\Sigma^ru=0$.  Exactness of
	$\HomC(\Sigma^ra,-)$ on the triangle supplies
	$s:\Sigma^ra\to Y$ with
	$gs=\iota_Z^r\Sigma^ru$.  But
	$\HomC(\Sigma^r\mA,\mD_{[r-1,0]})=0$, so $s=0$.
	Lemma~\ref{lem:top-representability} now gives $u=0$.  Thus
	$\HomA(a,\delta_r^{\Delta_r})$ is injective for every $a$, and
	$\delta_r^{\Delta_r}$ is monic.
\end{proof}

\begin{prop}
	\label{prop:top-four-term}
	For \eqref{eq:level-r-triangle}, the sequence
	\[
	0\longrightarrow H_r(X)\xrightarrow{H_r(f)}H_r(Y)
	\xrightarrow{H_r(g)}H_r(Z)
	\xrightarrow{\delta_r^{\Delta_r}}H_{r-1}(X)
	\]
	is exact at its first three nonzero terms.
\end{prop}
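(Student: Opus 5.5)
The plan is to test exactness objectwise with the representable functors $\HomA(a,-)$ for $a\in\mA$, and transport everything via Lemma~\ref{lem:top-representability}. That lemma identifies
\[
\HomA(a,H_r(W))\cong\HomC(\Sigma^ra,W)\qquad(W\in\mD_{[r,0]}).
\]
By naturality of the counits $\iota^r$, the maps $H_r(f)$ and $H_r(g)$ correspond to composition with $f$ and $g$; this naturality comes from the uniqueness of the $s$-torsion decompositions. A sequence $0\to A\to B\to C\to D$ in $\mA$ is exact at $A$, $B$ and $C$ if and only if $0\to\HomA(a,A)\to\HomA(a,B)\to\HomA(a,C)\to\HomA(a,D)$ is exact for every $a$. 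So it suffices to prove exactness of
\[
0\to\HomC(\Sigma^ra,X)\xrightarrow{f_*}\HomC(\Sigma^ra,Y)\xrightarrow{g_*}\HomC(\Sigma^ra,Z)\to\HomA(a,H_{r-1}(X)),
\]
where the last arrow is $\HomA(a,\delta_r^{\Delta_r})$ composed with the inverse representability isomorphism.

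The first two exactness statements come from the long exact $\HomC(\Sigma^ra,-)$ sequence of $\Delta_r$. Injectivity of $f_*$ holds because the preceding term is $\HomC(\Sigma^ra,\Sigma^{-1}Z)$. Since $Z\in\mD_{[r,0]}$ and $\Sigma^{r}a=\Sigma^{-1}\Sigma^{r+1}a$, this group is $\HomC(\Sigma^{r+1}a,Z)$, which vanishes by the second assertion of Lemma~\ref{lem:top-representability}. Exactness at $\HomC(\Sigma^ra,Y)$ is then immediate from the triangle.

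The main step is exactness at $H_r(Z)$. Take $u:a\to H_r(Z)$ and set $v=\iota^r_Z\Sigma^ru$. By exactness of the Hom sequence, $v$ factors through $g$ if and only if $hv=0$. Applying \eqref{eq:edge-characterization} gives
\[
\iota^{r-1}_{X^{<r}}\Sigma^{r-1}(\delta_r^{\Delta_r}u)=\pi^r_X\Sigma^{-1}(hv).
\]
By Lemma~\ref{lem:top-representability} applied to $X^{<r}\in\mD_{r-1}$, the left side vanishes if and only if $\delta_r^{\Delta_r}u=0$. It therefore remains to show that $\pi^r_X\Sigma^{-1}(hv)=0$ implies $hv=0$. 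This is the expected obstacle.

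To overcome it, write $w=\Sigma^{-1}(hv):\Sigma^{r-1}a\to X$. Apply $\HomC(\Sigma^{r-1}a,-)$ to the top-layer triangle \eqref{eq:top-truncation} of $X$. If $\pi^r_Xw=0$, then $w$ factors through $\iota^r_X:\Sigma^rH_r(X)\to X$. The factoring map lies in $\HomC(\Sigma^{r-1}a,\Sigma^rH_r(X))$, which is a Hom from $\mA$ into $\Sigma\mA$ after shifting. This group need not vanish, so one must argue more carefully.

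Write $w=\iota^r_X\,t$ with $t:\Sigma^{r-1}a\to\Sigma^rH_r(X)$. Then $hv=\Sigma(\iota_X^r t)$, and the goal is to absorb this into the choice of lift. Note that $\Sigma w\in\HomC(\Sigma^ra,\Sigma X)$. Consider the composite
\[
\Sigma^ra\xrightarrow{\Sigma t}\Sigma^{r+1}H_r(X)\xrightarrow{\Sigma\iota^r_X}\Sigma X.
\]
Using the octahedral axiom on $h\iota^r_Z$ and the top decomposition, I would compare with the induced triangle on top layers. Alternatively, I would use the uniqueness of the lift $u_r$ in \eqref{eq:unique-upper-lift} at the cut $r$. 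The point is that $hv$ factors through $\Sigma e^X_{r-1}$ via $u_r\Sigma^ru$, and $e^X_{r-1}$ lands in $U^{[r]}_{r-1}(X)$, with $\beta_{r-1}$ detecting it modulo $U^{[r]}_r(X)$.

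So $\delta u=0$ means $\bar u_r\Sigma^{r-1}u$ factors through $d_{r-1}:U_r(X)\to U_{r-1}(X)$. Hence $hv=\Sigma(e_r^X s')$ for some $s':\Sigma^{r-1}a\to\Sigma^rH_r(X)$. In that case $hv=(\Sigma\iota^r_X)(\Sigma s')$ is a composite with a suspension, and it is killed by the next map $-\Sigma f$ in the rotated triangle only up to the top-layer parts.

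The final step is to modify $u$'s lift. I would show that $\Sigma s'$ lifts through the connecting map by building a morphism of triangles from the split piece. Concretely, apply the octahedral axiom to the composite $\Sigma^r a\to\Sigma^{r+1}H_r(X)\to\Sigma X$, so that the obstruction is represented by an element of $\HomC(\Sigma^r a,\Sigma^{r+1}H_r(X))\cong\mathrm{Ext}^1_{\mA}(a,H_r(X))$. I would then use properness of $\mA$ together with the fact that $v$ factors through the top layer of $Z$ to show that this extension class vanishes. This is the step I expect to need the most care.
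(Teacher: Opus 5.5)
\textbf{There is a genuine gap at the third term: the statement you reduce to is false.} Your argument for exactness at $H_r(X)$ and $H_r(Y)$ is fine and is exactly the paper's.

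Your Yoneda criterion holds only in the ``if'' direction. At the third term the Hom-level condition is strictly stronger than exactness. Exactness of $\HomA(a,H_r(Y))\to\HomA(a,H_r(Z))\to\HomA(a,H_{r-1}(X))$ for all $a$ would force every morphism $a\to I:=\mathrm{Im}\,H_r(g)$ to lift along the epimorphism $H_r(Y)\twoheadrightarrow I$, and that requires $a$ to be projective. Consequently the claim ``$\pi_X^r\Sigma^{-1}(hv)=0$ implies $hv=0$'' is false. The obstruction class in $\mathrm{Ext}^1_{\mA}(a,H_r(X))$ that you hope to kill is in general nonzero.

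Here is a concrete counterexample. Take $\mC=D^b(\mA)$ with $\mA$ the representations of $1\to2$; this is a standard heart, so all vanishing conditions hold. Take $r=1$ and the suspension $\Sigma S_2\xrightarrow{f}\Sigma P\xrightarrow{g}\Sigma S_1\xrightarrow{h}\Sigma^2S_2$ of the triangle of the nonsplit sequence $0\to S_2\to P\to S_1\to0$. Then $X^{<1}=0$, so $\pi_X^1=0$ and $H_0(X)=0$. For $a=S_1$ and $u=1_{S_1}$ we get $\delta_1u=0$. Yet $v=\iota_Z^1\Sigma u$ is an isomorphism and $hv\neq0$, because the extension is nonsplit. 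Your $t$ is exactly the nonzero extension class, so no use of properness or of the lift $u_r$ can make it vanish.

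\textbf{How to repair it.} You only need $\mathrm{Ker}\,\delta_r^{\Delta_r}\subseteq I$, that is, $q_Ju=0$ for $q_J\colon H_r(Z)\twoheadrightarrow J:=\mathrm{Coker}(I\hookrightarrow H_r(Z))$. You do not need $u$ itself to lift. Your setup can be kept by passing to an epimorphic cover of $a$ instead of killing $t$:
\begin{enumerate}
\item Write $\Sigma^{-1}(hv)=\iota_X^r t$ and set $\theta:=\Sigma^{1-r}t\colon a\to\Sigma H_r(X)$.
\item By extension closure and properness, $\theta$ completes to a short exact sequence $0\to H_r(X)\to E\xrightarrow{p}a\to0$ in $\mA$.
\item Then $hv\,\Sigma^rp=(\Sigma\iota_X^r)(\Sigma^r\theta)(\Sigma^rp)=0$, so $v\,\Sigma^rp$ factors through $g$.
\item Lemma~\ref{lem:top-representability} then gives $up=H_r(g)s'$ for some $s'\colon E\to H_r(Y)$.
\item Hence $q_Jup=0$, and since $p$ is epic, $q_Ju=0$.
\end{enumerate}
Taking $a=\mathrm{Ker}\,\delta_r^{\Delta_r}$ gives the missing inclusion. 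The reverse inclusion $\delta_r^{\Delta_r}H_r(g)=0$ follows from \eqref{eq:edge-characterization} and $hg=0$.

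The paper reaches the same conclusion by a different route. It applies the $3\times3$ lemma to the top-layer triangles of $X$ and $Y$. This produces $N$ with $H_r(N)\cong J$ and a relative triangle $\Gamma$ whose first two terms lie in $\mD_{r-1}$. It then shows $\delta_r^{\Delta_r}=\delta_r^\Gamma q_J$ and invokes Lemma~\ref{lem:edge-monomorphism}. That lemma is a setting where your Yoneda test does work, because the would-be lift $s\colon\Sigma^ra\to Y$ is forced to be zero.
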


\begin{proof}
	Apply $\HomC(\Sigma^ra,-)$ to \eqref{eq:level-r-triangle}.  The preceding
	term $\HomC(\Sigma^{r+1}a,Z)$ vanishes by Lemma
	\ref{lem:top-representability}.  Lemma~\ref{lem:top-representability} followed by
	Yoneda detection in the abelian category $\mA$ proves exactness of
	\[
	0\longrightarrow H_r(X)\longrightarrow H_r(Y)\longrightarrow H_r(Z).
	\]
	
	Put $I=\mathrm{Im} H_r(g)$, let $j:I\hookrightarrow H_r(Z)$, and let
	$q_J:H_r(Z)\twoheadrightarrow J:=\mathrm{Coker} j$ be the quotient.  Properness realizes
	\[\begin{tikzcd}
		0 & {H_r(X)} & {H_r(Y)} & I & 0
		\arrow[from=1-1, to=1-2]
		\arrow[from=1-2, to=1-3]
		\arrow["\varepsilon", from=1-3, to=1-4]
		\arrow[from=1-4, to=1-5]
	\end{tikzcd}\]
	as a triangle.  The $3\times3$ lemma, applied to the natural top
	decomposition triangles of $X$ and $Y$, supplies an object $N$ and triangles
	\begin{align}
		X^{<r}&\longrightarrow Y^{<r}\longrightarrow N
		\xrightarrow{\nu}\Sigma X^{<r},                       \label{eq:relative-reduction}\\
		\Sigma^rI&\xrightarrow{t}Z\xrightarrow{\rho}N
		\longrightarrow\Sigma^{r+1}I.                         \label{eq:I-reduction}
	\end{align}
	
\[\begin{tikzcd}
	{\Sigma^rH_r(X)} & X & {X^{< r}} \\
	{\Sigma^rH_r(Y)} & Y & {Y^{< r}} \\
	{\Sigma ^rI} & Z & N \\
	& {\Sigma X} & {\Sigma X^{< r}}
	\arrow[from=1-1, to=1-2]
	\arrow[from=1-1, to=2-1]
	\arrow[from=1-2, to=1-3]
	\arrow[from=1-2, to=2-2]
	\arrow[from=1-3, to=2-3]
	\arrow[from=2-1, to=2-2]
	\arrow[from=2-1, to=3-1]
	\arrow[from=2-2, to=2-3]
	\arrow[from=2-2, to=3-2]
	\arrow[from=2-3, to=3-3]
	\arrow["{t}", from=3-1, to=3-2]
	\arrow["\rho", from=3-2, to=3-3]
	\arrow["h"', from=3-2, to=4-2]
	\arrow["\nu", from=3-3, to=4-3]
	\arrow["{\Sigma \pi_X^r}"', from=4-2, to=4-3]
\end{tikzcd}\]
	
	The following diagram gives $t=\iota_Z^r\Sigma^r j$.
	
	\[\begin{tikzcd}
		{\Sigma^rH_r(Y)} & {\Sigma^rI} & {\Sigma^{r+1}H_r(X)} & {} \\
		& Z
		\arrow["{\Sigma^r\varepsilon}", from=1-1, to=1-2]
		\arrow[from=1-2, to=1-3]
		\arrow["{t-\iota_Z^r\Sigma^rj}"', from=1-2, to=2-2]
		\arrow["{+1}", from=1-3, to=1-4]
		\arrow["0", dashed, from=1-3, to=2-2]
	\end{tikzcd}\]
	
	The octahedral axiom for
	$\Sigma^rI\to\Sigma^rH_r(Z)\to Z$ gives
	\begin{equation}\label{eq:N-top-truncation}
		\Sigma^rJ\xrightarrow{\iota_N^r}N\xrightarrow{\kappa}Z^{<r}
		\longrightarrow\Sigma^{r+1}J.
	\end{equation}
	
\[\begin{tikzcd}
	{\Sigma^rI} & {\Sigma^rH_r(Z)} & {\Sigma^r J} \\
	{\Sigma^rI} & Z & N \\
	& {Z^{< r}} & {Z^{< r}}
	\arrow[from=1-1, to=1-2]
	\arrow[equals, nfold, from=1-1, to=2-1]
	\arrow["{\Sigma^r q_J}", from=1-2, to=1-3]
	\arrow["{\iota_Z^r}"', from=1-2, to=2-2]
	\arrow["{\iota^r_N}", dashed, from=1-3, to=2-3]
	\arrow[from=2-1, to=2-2]
	\arrow["\rho", from=2-2, to=2-3]
	\arrow["{\pi_Z^r}", from=2-2, to=3-2]
	\arrow["\kappa", dashed, from=2-3, to=3-3]
	\arrow[equals, nfold, from=3-2, to=3-3]
\end{tikzcd}\]
	
	This is the top-layer decomposition triangle of $N$.  Hence
	\begin{equation}\label{eq:N-layers}
		H_r(N)\cong J,
		\qquad H_i(\kappa):H_i(N)\xrightarrow{\sim} H_i(Z^{<r})\quad(i<r).
	\end{equation}
	
	Choose the standard signs in the combined $3\times3$--octahedral diagram.
	Its relevant right-hand squares give
	\begin{equation}\label{eq:N-reduction-identities}
		\rho\iota_Z^r=\iota_N^r\Sigma^rq_J,
		\qquad
		\nu\rho=(\Sigma\pi_X^r)h,
		\qquad
		\kappa\rho=\pi_Z^r.
	\end{equation}
	Let $\Gamma$ denote the relative triangle
	\eqref{eq:relative-reduction}.  Applying Lemma~\ref{lem:edge-characterization} to
	$\Gamma$, precomposing by $\Sigma^{r-1}q_J$, and using
	\eqref{eq:N-reduction-identities}, we obtain
	\begin{align*}
		\iota_{X^{<r}}^{r-1}\Sigma^{r-1}(\delta_r^\Gamma q_J)
		&=\Sigma^{-1}(\nu\iota_N^r\Sigma^rq_J)\\
		&=\Sigma^{-1}(\nu\rho\iota_Z^r)\\
		&=\pi_X^r\Sigma^{-1}(h\iota_Z^r)
		=\iota_{X^{<r}}^{r-1}\Sigma^{r-1}\delta_r^{\Delta_r}.
	\end{align*}
	Applying Lemma~\ref{lem:top-representability} to $X^{<r}$ therefore gives
	$\delta_r^{\Delta_r}=\delta_r^\Gamma q_J$.  By Lemma
	\ref{lem:edge-monomorphism}, $\delta_r^\Gamma$ is monic.  Hence
	\[
	\mathrm{Ker}\delta_r^{\Delta_r}=I=\mathrm{Im} H_r(g).
	\]
\end{proof}

\par\medskip
\noindent\textbf{Step 2: A relative filtered snake lemma.}
\par\smallskip

The relative statement below and Step~3 are proved simultaneously by
induction on $r$.  When proving the relative statement at level $r$, we
use the general statement already established at level $r-1$.

\begin{prop}
	\label{prop:relative-snake}
	Let
	\[
	U\xrightarrow uV\xrightarrow vW\xrightarrow w\Sigma U
	\]
	be a triangle with $U,V\in\mD_{[r-1,0]}$ and
	$W\in\mD_{[r,0]}$, where $1\leq r\leq\ell$.  Assuming the long exact
	statement at level $r-1$, there is an exact sequence
	\begin{align*}
		0\longrightarrow H_r(W)&\xrightarrow{\delta_r}H_{r-1}(U)
		\longrightarrow H_{r-1}(V)\longrightarrow H_{r-1}(W)\\
		&\longrightarrow H_{r-2}(U)\longrightarrow\cdots
		\longrightarrow H_0(U)\longrightarrow H_0(V)
		\longrightarrow H_0(W)\longrightarrow0.
	\end{align*}
\end{prop}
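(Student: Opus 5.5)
The plan is to reduce the relative triangle to triangles whose three terms all lie in $\mD_{[r-1,0]}$, and then invoke the long exact statement at level $r-1$. The leftmost piece, the monomorphism $0\to H_r(W)\xrightarrow{\delta_r}H_{r-1}(U)$, is already Lemma~\ref{lem:edge-monomorphism}, since $U,V\in\mD_{[r-1,0]}$. Note also that $H_r(U)=H_r(V)=0$ by Proposition~\ref{prop 3.7}(4). So the real content is exactness at $H_{r-1}(U)$ together with the lower part of the sequence.

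The key construction uses the top truncation $\Sigma^rH_r(W)\xrightarrow{\iota_W^r}W\xrightarrow{\pi_W^r}W^{<r}$. I will apply the octahedral axiom to the composite $V\xrightarrow{v}W\xrightarrow{\pi_W^r}W^{<r}$. Let $U'$ denote the fibre of $\pi_W^rv$. The octahedron then yields two triangles:
\[
U'\longrightarrow V\xrightarrow{\pi_W^rv}W^{<r}\longrightarrow\Sigma U',
\qquad
U\longrightarrow U'\longrightarrow\Sigma^rH_r(W)\xrightarrow{\ \omega\ }\Sigma U .
\]
Here $\omega=(\Sigma^{-1}\text{-shift of }w)\circ\iota_W^r$ up to the standard sign. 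The second triangle places $U'$ in $\mD_{[r-1,0]}*\Sigma^r\mA\subseteq\mD_{[r,0]}$, by Lemma~\ref{lem 5.1} and Proposition~\ref{prop 3.3}.

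The hoped-for simplification is to show that $U'$ actually lies in $\mD_{[r-1,0]}$. By Proposition~\ref{prop 3.7}(4), this amounts to $H_r(U')=0$. Proposition~\ref{prop:top-four-term} applied to $U\to U'\to\Sigma^rH_r(W)$ gives the exact sequence $0\to H_r(U')\to H_r(W)\xrightarrow{\delta_r}H_{r-1}(U)$. Since $\delta_r$ is monic, this forces $H_r(U')=0$. The identification of this $\delta_r$ with the $\delta_r$ of the original triangle is by Lemma~\ref{lem:edge-characterization}, since both are characterized by the same composite $\pi\Sigma^{-1}(w\iota_W^r)$.

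Once $U'\in\mD_{[r-1,0]}$, both triangles live in $\mD_{[r-1,0]}$ except for the pure term $\Sigma^rH_r(W)$. The first triangle $U'\to V\to W^{<r}$ is then a triangle in $\mD_{r-1}$, so the level-$(r-1)$ long exact sequence applies. Combined with $H_i(W^{<r})\cong H_i(W)$ for $i<r$ (Proposition~\ref{prop 3.7}(3)), it yields the long exact sequence
\[
\cdots\to H_{i}(U')\to H_i(V)\to H_i(W)\to H_{i-1}(U')\to\cdots\to H_0(W)\to0,
\]
which starts with $0\to H_{r-1}(U')\to H_{r-1}(V)$.

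For the second triangle, the layer functors of $U\to U'$ in degrees $<r$ should be handled as follows. The map $U\to U'$ induces isomorphisms $H_i(U)\cong H_i(U')$ for $i\le r-2$. In degree $r-1$ it gives the exact sequence $0\to H_r(W)\xrightarrow{\delta_r}H_{r-1}(U)\to H_{r-1}(U')\to0$. I would get this from the cross-window boundary of Lemma~\ref{lem:cross-window} applied to $\omega\colon\Sigma^rH_r(W)\to\Sigma U$, together with the top four-term sequence applied to the rotated triangle $\Sigma^{r-1}H_r(W)\to U\to U'\to\Sigma^rH_r(W)$, now viewed inside $\mD_{[r-1,0]}$ after one desuspension. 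Splicing this short exact sequence into the level-$(r-1)$ sequence at $H_{r-1}(U')$, and replacing $H_i(U')$ by $H_i(U)$ in lower degrees, produces the claimed sequence.

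The remaining check is that the spliced maps are the intended ones. That is, the composite $H_{r-1}(U)\to H_{r-1}(U')\to H_{r-1}(V)$ must equal $H_{r-1}(u)$, and the connecting maps $H_i(W)\to H_{i-1}(U')\cong H_{i-1}(U)$ must agree with $\delta_i$ from \eqref{eq:cross-window-boundary}. Both should follow from the uniqueness clauses in Lemma~\ref{lem:cross-window} and from naturality of the octahedron's commutative squares.

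I expect the main obstacle to be the degree-$(r-1)$ splicing step. The triangle $\Sigma^{r-1}H_r(W)\to U\to U'$ has its first term in degree $r-1$ but is not a top-layer decomposition, so a separate argument is needed there. My first attempt will be a Yoneda-detection argument of the kind used in Lemma~\ref{lem:edge-monomorphism}: apply $\HomC(\Sigma^{r-1}a,-)$ and use Lemma~\ref{lem:top-representability} for $U$ and $U'$. The vanishing $\HomC(\Sigma^r\mA,\mD_{[r-1,0]})=0$ should make $\HomC(\Sigma^{r-1}a,U)\to\HomC(\Sigma^{r-1}a,U')$ surjective, and the kernel is then identified with the image of $\delta_r$.
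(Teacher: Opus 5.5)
Your construction is the paper's, in a different order. Note that $\omega=\pm w\iota_W^r$, with no desuspension of $w$. Rotating your second octahedral triangle therefore exhibits $U'$ as the cone of $\pm\alpha$, where $\alpha=\Sigma^{-1}(w\iota_W^r)=\iota_U^{r-1}\Sigma^{r-1}\delta_r$. So $U'$ is the paper's $P$, and your triangle $U'\to V\to W^{<r}$ with the octahedral squares is exactly the morphism of triangles \eqref{eq:relative-morphism-of-triangles}. Your final splicing and the identification of boundaries via Lemma~\ref{lem:cross-window} are also as in the paper. The paper instead starts from $\mathrm{Coker}\,\delta_r$ and applies the octahedron to the factorization $\alpha=\iota_U^{r-1}\Sigma^{r-1}\delta_r$. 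This directly yields the top-layer triangle $\Sigma^{r-1}\mathrm{Coker}\,\delta_r\to P\to U^{<r-1}$. Hence $P\in\mD_{[r-1,0]}$, $H_{r-1}(P)\cong\mathrm{Coker}\,\delta_r$, and $H_i(P)\cong H_i(U)$ for $i<r-1$ come for free. Your separate proof that $H_r(U')=0$, via Proposition~\ref{prop:top-four-term} and monicity of $\delta_r$, is a valid substitute for the first of these.

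What fails is your planned Yoneda argument for the step you call the main obstacle, because covariant representables cannot detect epimorphisms. The cokernel of $\HomC(\Sigma^{r-1}a,U)\to\HomC(\Sigma^{r-1}a,U')$ embeds in $\HomC(\Sigma^{r-1}a,\Sigma^rH_r(W))\cong\HomC(a,\Sigma H_r(W))$, which need not vanish. The vanishing $\HomC(\Sigma^r\mA,\mD_{[r-1,0]})=0$ says nothing about it. Surjectivity for all $a$ would even force $H_{r-1}(U)\to H_{r-1}(U')$ to be a split epimorphism. Concretely, take $u=\Sigma^{r-1}$ of the epimorphism $(k\xrightarrow{1}k)\to S_1$ in Proposition~\ref{prop:periodic}. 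Then $W=\Sigma^rS_2$ and $U'\cong V$, and for $a=S_1$ your map is $0\to k$.

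No separate argument is needed, though. Once $U'\in\mD_{[r-1,0]}$, the rotated triangle $\Sigma^{r-1}H_r(W)\to U\to U'\to\Sigma^rH_r(W)$ lies entirely in $\mD_{[r-1,0]}$. The level-$(r-1)$ hypothesis applies to it like any triangle there; it does not have to be a top-layer decomposition. Since $H_i(\Sigma^{r-1}H_r(W))=0$ for $i\neq r-1$, it gives at once $0\to H_r(W)\to H_{r-1}(U)\to H_{r-1}(U')\to0$ and $H_i(U)\cong H_i(U')$ for $i\leq r-2$. The first map is $\pm\delta_r$, because $H_{r-1}(\alpha)=\delta_r$ by naturality of $\iota^{r-1}$ and Lemma~\ref{lem:top-representability}.
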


\begin{proof}
	Set
	\[
	L=H_r(W),\qquad M=H_{r-1}(U),\qquad
	d=\delta_r:L\longrightarrow M.
	\]
	By Lemma \ref{lem:edge-monomorphism}, $d$ is monic.  Let
	$c:M\twoheadrightarrow K:=\mathrm{Coker} d$ be its cokernel.
	Properness gives a triangle
	\begin{equation}\label{eq:cokernel-edge}
		\Sigma^{r-1}L\xrightarrow{\Sigma^{r-1}d}\Sigma^{r-1}M
		\longrightarrow\Sigma^{r-1}K\longrightarrow\Sigma^rL.
	\end{equation}
	Let
	\[
	\alpha:=\Sigma^{-1}(w\iota_W^r):\Sigma^{r-1}L\longrightarrow U.
	\]
	By \eqref{eq:edge-characterization},
	$\alpha=\iota_U^{r-1}\Sigma^{r-1}d$.  The octahedral axiom applied to
	this factorization supplies $P$ and triangles
	\begin{align}
		\Sigma^{r-1}L&\xrightarrow{\alpha}U\xrightarrow{q}P
		\longrightarrow\Sigma^rL,                               \label{eq:P-first}\\
		\Sigma^{r-1}K&\longrightarrow P\longrightarrow U^{<r-1}
		\longrightarrow\Sigma^rK.                               \label{eq:P-top}
	\end{align}
	With the standard octahedral signs, the first octahedron also gives the
	morphism of top-layer triangles
	\begin{equation}\label{eq:P-top-comparison}
		\begin{tikzcd}[column sep=large]
			\Sigma^{r-1}M \arrow[r,"\iota_U^{r-1}"]
			\arrow[d,"\Sigma^{r-1}c"'] &
			U \arrow[r,"\pi_U^{r-1}"] \arrow[d,"q"'] &
			U^{<r-1} \arrow[r] \arrow[d,"1"'] &
			\Sigma^rM \arrow[d,"\Sigma^rc"]\\
			\Sigma^{r-1}K \arrow[r] &
			P \arrow[r] &
			U^{<r-1} \arrow[r] &
			\Sigma^rK.
		\end{tikzcd}
	\end{equation}
	The bottom row is the top-layer decomposition triangle of $P$, so
	\begin{equation}\label{eq:P-layers}
		H_{r-1}(P)\cong K,
		\qquad H_i(P)\cong H_i(U)\quad(i<r-1).
	\end{equation}
	Under these canonical identifications,
	\begin{equation}\label{eq:q-layer-maps}
		H_{r-1}(q)=c,
		\qquad H_i(q)\text{ is an isomorphism for }i<r-1.
	\end{equation}
	
	Apply the octahedral axiom to
	$\Sigma^rL\xrightarrow{\iota_W^r} W\xrightarrow{w}
	\Sigma U$.  
	
	\[\begin{tikzcd}
		{\Sigma^r L} & W & {W^{<r}} \\
		{\Sigma^r L} & {\Sigma U} & {\Sigma P} \\
		& {\Sigma V} & {\Sigma V}
		\arrow["{\iota_W^r}", from=1-1, to=1-2]
		\arrow[equals, nfold, from=1-1, to=2-1]
		\arrow["{\pi_W^r}", from=1-2, to=1-3]
		\arrow["w", from=1-2, to=2-2]
		\arrow["\omega", from=1-3, to=2-3]
		\arrow["{\Sigma\alpha}"', from=2-1, to=2-2]
		\arrow["{\Sigma q}", from=2-2, to=2-3]
		\arrow["{\Sigma u}"', from=2-2, to=3-2]
		\arrow["{\Sigma s}", from=2-3, to=3-3]
		\arrow[equals, nfold, from=3-2, to=3-3]
	\end{tikzcd}\]
	
	  More precisely, it gives the morphism of triangles
	\begin{equation}\label{eq:relative-morphism-of-triangles}
		\begin{tikzcd}[column sep=large]
			U \arrow[r,"u"] \arrow[d,"q"'] &
			V \arrow[r,"v"] \arrow[d,"1"'] &
			W \arrow[r,"w"] \arrow[d,"\pi_W^r"'] &
			\Sigma U \arrow[d,"\Sigma q"]\\
			P \arrow[r,"s"'] &
			V \arrow[r,"\pi_W^rv"'] &
			W^{<r} \arrow[r,"\omega"'] &
			\Sigma P.
		\end{tikzcd}
	\end{equation}
	The induction hypothesis applied to the bottom triangle,
	together with \eqref{eq:P-layers}, gives an exact sequence beginning
	\[
	0\longrightarrow K\longrightarrow H_{r-1}(V)
	\longrightarrow H_{r-1}(W)\longrightarrow H_{r-2}(U)
	\longrightarrow\cdots.
	\]
	For $1\leq i<r$, the maps $H_i(\pi_W^r)$ and $H_{i-1}(q)$ are isomorphisms by
	\eqref{eq:q-layer-maps} and
	Proposition \ref{prop 3.7}.  Naturality of the cross-window
	maps in Lemma \ref{lem:cross-window} therefore gives
	\begin{equation}\label{eq:relative-boundary-conjugacy}
		H_{i-1}(q)\,\delta_i^{(U,V,W)}
		=\delta_i^{(P,V,W^{<r})}\,H_i(\pi_W^r).
	\end{equation}
	The first two squares of
	\eqref{eq:relative-morphism-of-triangles}, together with
	\eqref{eq:q-layer-maps}, identify all ordinary layer maps in the inductive
	sequence with $H_i(u)$ and $H_i(v)$.  Its last square and
	\eqref{eq:relative-boundary-conjugacy} identify its connecting maps with
	the canonical boundaries \eqref{eq:cross-window-boundary} of the original
	relative triangle.  (The boundary on the right is computed at level
	$r-1$ and then identified with level $r$ by the zero-upper-layer
	comparison.)  Splicing with
	$0\to L\xrightarrow{d}M\xrightarrow{c}K\to0$ now gives exactly the asserted
	sequence.
\end{proof}

\par\medskip
\noindent\textbf{Step 3: The general triangle.}
\par\smallskip

\begin{prop}
	\label{prop:existence-long-exact}
	For every $0\leq r\leq\ell$ and every triangle
	\[
	\Delta=(X\xrightarrow fY\xrightarrow gZ\xrightarrow h\Sigma X)
	\]
	in $\mD_{[r,0]}$, the canonical connecting maps
	\eqref{eq:cross-window-boundary} give an exact sequence
	\begin{align*}
		0\longrightarrow H_r(X)&\longrightarrow H_r(Y)\longrightarrow H_r(Z)
		\longrightarrow H_{r-1}(X)\longrightarrow\cdots\\
		&\cdots\longrightarrow H_0(X)\longrightarrow H_0(Y)
		\longrightarrow H_0(Z)\longrightarrow0.
	\end{align*}
	In particular, for each $i$ the sequence
	\[
	H_i(X)\longrightarrow H_i(Y)\longrightarrow H_i(Z)
	\]
	is exact at $H_i(Y)$.
\end{prop}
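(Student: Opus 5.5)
We argue by induction on $r$, simultaneously with Proposition~\ref{prop:relative-snake} as announced in Step~2. For $r=0$ all three objects lie in $\mA$, and $H_0$ is the identity on $\mA$ (Proposition~\ref{prop 3.7}(1)). A triangle in $\mA$ is a short exact sequence by properness, because $\mA$ is extension closed and the third object lies in $\mA$. Hence $0\to X\to Y\to Z\to0$ is exact. Now let $r\geq1$ and assume the statement at level $r-1$. Then Proposition~\ref{prop:relative-snake} is available at level $r$.

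Given $\Delta$ in $\mD_{[r,0]}$, we use the $3\times3$ diagram already constructed in the proof of Proposition~\ref{prop:top-four-term}. It provides the relative triangle $\Gamma=(X^{<r}\to Y^{<r}\to N\xrightarrow{\nu}\Sigma X^{<r})$ with $X^{<r},Y^{<r}\in\mD_{[r-1,0]}$ and $N\in\mD_{[r,0]}$. It also provides the triangle \eqref{eq:N-top-truncation}, which identifies $H_r(N)\cong J=\mathrm{Coker}\,H_r(g)$ and $H_i(N)\cong H_i(Z)$ for $i<r$ via $\kappa$ and $\pi_Z^r$. Proposition~\ref{prop:top-four-term} gives exactness of
\[0\to H_r(X)\to H_r(Y)\to H_r(Z)\to H_{r-1}(X)\]
and the factorization $\delta_r^{\Delta}=\delta_r^\Gamma q_J$. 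Proposition~\ref{prop:relative-snake} applied to $\Gamma$ gives the exact sequence
\[0\to J\xrightarrow{\delta_r^\Gamma}H_{r-1}(X^{<r})\to H_{r-1}(Y^{<r})\to H_{r-1}(N)\to\cdots\to H_0(N)\to0.\]
We splice the two sequences along $H_r(Z)\twoheadrightarrow J\hookrightarrow H_{r-1}(X)$. Since $\mathrm{Ker}\,q_J=\mathrm{Im}\,H_r(g)$ and $\delta_r^\Gamma$ is monic, this yields exactness at $H_r(Z)$ and at $H_{r-1}(X)$.

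It remains to identify the terms and maps of the relative sequence with those for $\Delta$. For the objects, the natural isomorphisms $H_i(\pi_X^r)$, $H_i(\pi_Y^r)$ from Proposition~\ref{prop 3.7}(3) handle $X$ and $Y$, and $H_i(\kappa)$ handles $N$. For the layer maps, the commutative squares of the $3\times3$ diagram show that $H_i(X^{<r})\to H_i(Y^{<r})$ corresponds to $H_i(f)$. Together with $\kappa\rho=\pi_Z^r$ and the naturality of $H_i$, they show that $H_i(Y^{<r})\to H_i(N)$ corresponds to $H_i(g)$.

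For the connecting maps with $i<r$, we apply Lemma~\ref{lem:cross-window}. Its naturality, applied to the square $\nu\rho=(\Sigma\pi_X^r)h$ in \eqref{eq:N-reduction-identities}, gives
\[H_{i-1}(\pi_X^r)\,\delta_i^\Delta=\delta_i^\Gamma\,H_i(\rho).\]
Here $H_i(\rho)$ is an isomorphism for $i<r$, since $\kappa\rho=\pi_Z^r$ and both $H_i(\kappa)$ and $H_i(\pi_Z^r)$ are isomorphisms. This is the analogue of \eqref{eq:relative-boundary-conjugacy}. One must check that the cross-window maps for the $\Sigma\mD$-window are computed compatibly with the zero-top-layer identifications, which Lemma~\ref{lem:cross-window} asserts. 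With these identifications, the transported sequence is exactly the sequence of the theorem, and the final claim of exactness at $H_i(Y)$ is a special case.

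\textbf{Main obstacle.} The hardest step is the bookkeeping in this last identification. The $3\times3$ lemma yields only some commutative diagram, with signs and non-unique fill-in maps. We therefore have to confirm that the squares actually used, namely $\nu\rho=(\Sigma\pi_X^r)h$, $\kappa\rho=\pi_Z^r$, and the compatibility of $Y^{<r}\to N$ with $g$, commute on the nose (up to a global sign). We would rely on the uniqueness statements, namely the vanishing \eqref{eq:nested-comparison-vanishing} and Lemma~\ref{lem:top-representability}, so that any ambiguity is killed after applying the $H_i$.
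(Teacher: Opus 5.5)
Your proposal is correct and follows essentially the same route as the paper. You induct on $r$ jointly with the relative snake lemma, reduce $\Delta$ to the relative triangle $X^{<r}\to Y^{<r}\to N\xrightarrow{\nu}\Sigma X^{<r}$ coming from the $3\times3$ diagram, and splice along $\delta_r^\Delta=\delta_r^\Gamma q_J$. You then identify the lower terms, layer maps and connecting maps through the isomorphisms $H_i(\rho)$ and the naturality of the cross-window maps applied to $\nu\rho=(\Sigma\pi_X^r)h$, exactly as in equation \eqref{eq:general-boundary-conjugacy}.
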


\begin{proof}
	We induct on $r$.  At $r=0$, the assertion is precisely properness of
	$\mA$.  Assume it holds at $r-1$.  Apply
	Proposition \ref{prop:top-four-term} to the given triangle and retain the notation
	$I,J,N,q_J$ from its proof.  We obtain
	\[
	0\longrightarrow H_r(X)\longrightarrow H_r(Y)
	\longrightarrow H_r(Z)\longrightarrow J\longrightarrow0.
	\]
	The triangle \eqref{eq:relative-reduction} is relative in the sense of Proposition
	\ref{prop:relative-snake}.  By \eqref{eq:N-layers} and
	$\kappa\rho=\pi_Z^r$, the maps $H_i(\rho):H_i(Z)\to H_i(N)$ are
	isomorphisms for $0\leq i<r$.  Hence Proposition~\ref{prop:relative-snake} gives,
	after these canonical identifications,
	\[
	0\longrightarrow J\longrightarrow H_{r-1}(X)
	\longrightarrow H_{r-1}(Y)\longrightarrow H_{r-1}(Z)
	\longrightarrow\cdots\longrightarrow H_0(Z)\longrightarrow0.
	\]
	The $3\times3$ construction supplies the full morphism of triangles
	\begin{equation}\label{eq:general-reduction-morphism}
		\begin{tikzcd}[column sep=large]
			X \arrow[r,"f"] \arrow[d,"\pi_X^r"'] &
			Y \arrow[r,"g"] \arrow[d,"\pi_Y^r"'] &
			Z \arrow[r,"h"] \arrow[d,"\rho"'] &
			\Sigma X \arrow[d,"\Sigma\pi_X^r"]\\
			X^{<r} \arrow[r] &
			Y^{<r} \arrow[r] &
			N \arrow[r,"\nu"'] &
			\Sigma X^{<r}.
		\end{tikzcd}
	\end{equation}
	For $1\leq i<r$, the maps $H_{i-1}(\pi_X^r)$ are isomorphisms by Proposition
	\ref{prop 3.7}; moreover,
	$\kappa\rho=\pi_Z^r$ and \eqref{eq:N-layers} show that each
	$H_i(\rho)$ is an isomorphism.  Naturality in Lemma
	\ref{lem:cross-window} gives
	\begin{equation}\label{eq:general-boundary-conjugacy}
		H_{i-1}(\pi_X^r)\,\delta_i^{(X,Y,Z)}
		=\delta_i^{(X^{<r},Y^{<r},N)}\,H_i(\rho).
	\end{equation}
	The first two squares of \eqref{eq:general-reduction-morphism} show that,
	after the canonical lower-layer identifications, all ordinary maps in the
	relative sequence are $H_i(f)$ and $H_i(g)$.  The last square and
	\eqref{eq:general-boundary-conjugacy} give the same conclusion for all
	lower connecting maps.  At the splice, Step~1 already proved
	$\delta_r^\Delta=\delta_r^\Gamma q_J$.  Thus splicing the two displayed
	sequences gives exactly the canonical sequence claimed in the statement.
\end{proof}

\medskip
\noindent\textbf{Naturality of the connecting morphisms.}

\begin{prop}\label{prop:boundary-naturality}
	Let
	\[
	\begin{tikzcd}[column sep=large]
		X \arrow[r,"f"] \arrow[d,"x"'] &
		Y \arrow[r,"g"] \arrow[d,"y"'] &
		Z \arrow[r,"h"] \arrow[d,"z"'] &
		\Sigma X \arrow[d,"\Sigma x"] \\
		X' \arrow[r,"f'"'] &
		Y' \arrow[r,"g'"'] &
		Z' \arrow[r,"h'"'] &
		\Sigma X'
	\end{tikzcd}
	\]
	be a morphism of triangles in $\mD_r$, where $r\leq\ell$.  Then
	\[
	H_{i-1}^{[r]}(x)\,\delta_i^\Delta
	=\delta_i^{\Delta'}H_i^{[r]}(z)
	\qquad(1\leq i\leq r).
	\]
	Consequently the induced diagram of long exact sequences commutes.
\end{prop}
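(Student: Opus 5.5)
The plan is to reduce the statement to the naturality clause of Lemma~\ref{lem:cross-window} and to the naturality of the shift isomorphisms \eqref{eq:shifted-layer-iso}. By definition \eqref{eq:cross-window-boundary}, $\delta_i^\Delta=\sigma_{i,X}\circ H_i^\rightarrow(h)$ and $\delta_i^{\Delta'}=\sigma_{i,X'}\circ H_i^\rightarrow(h')$. The last square of the morphism of triangles, $h'z=(\Sigma x)h$, is a commutative square whose vertical maps are $z\colon Z\to Z'$ in $\mD_r$ and $\Sigma x\colon \Sigma X\to\Sigma X'$ in $\mD_r^+$. The horizontal maps $h$ and $h'$ go from $\mD_r$ to $\mD_r^+$. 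This is exactly the kind of square for which Lemma~\ref{lem:cross-window} asserts naturality, so we expect
\[
H_i^{+,[r]}(\Sigma x)\,H_i^\rightarrow(h)=H_i^\rightarrow(h')\,H_i^{[r]}(z).
\]

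To make this explicit rather than citing it, I would redo the uniqueness argument from the lemma. Consider the upper lifts $h_i\colon U_i^{[r]}(Z)\to U_i^{+,[r]}(\Sigma X)$ and $h'_i$ from \eqref{eq:unique-upper-lift}, and the functorial maps $U_i(z)$ and $U_i^{+}(\Sigma x)$ coming from the $s$-torsion right adjoints. Both composites $U_i^+(\Sigma x)h_i$ and $h'_iU_i(z)$ satisfy the same defining equation after composing with $e_i^{+}$. Indeed, $e_i^{+}U_i^+(\Sigma x)h_i=(\Sigma x)e_i^+h_i=(\Sigma x)he_i$, and $e_i^+h'_iU_i(z)=h'e_iU_i(z)=h'ze_i$. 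By the vanishings \eqref{eq:cross-v1} and \eqref{eq:cross-v2}, which give injectivity of $\mathrm{Hom}(U_i^{[r]}(Z),-)$ on $U_i^{+}(\Sigma X')$ relative to the cut-$i$ triangle, the two composites agree.

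Next I would apply the same reasoning to the $(i+1)$ lifts. Comparing the two TR3-induced maps on pure quotients, both give fillers of the same morphism between upper layer triangles. Since \eqref{eq:cross-v3} forces uniqueness of such fillers, they coincide. This yields the displayed identity.

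Then I would compose with the shift isomorphisms. Naturality of $\sigma$ in $x$ gives $\sigma_{i,X'}H_i^{+,[r]}(\Sigma x)=H_{i-1}^{[r]}(x)\sigma_{i,X}$. This holds because $H_i^{+,[r]}=H_{i-1}^{[r]}\Sigma^{-1}$ by definition, so $\sigma$ is the identification induced by $\Sigma^{-1}\Sigma\cong\mathrm{Id}$, which is natural. Combining the two identities gives $H_{i-1}(x)\delta_i^\Delta=\delta_i^{\Delta'}H_i(z)$.

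The remaining squares of the long exact sequences involve only $H_i(f),H_i(g)$ and their primed versions. These commute because each $H_i$ is a functor and $yf=f'x$, $zg=g'y$. The main point needing care is the sign and endpoint conventions: the case $i=r$ with $u_{r+1}=0$, and the top normalization \eqref{eq:top-endpoint-normalization}. These should be handled by the same uniqueness argument, since the endpoint lift is zero on both sides.
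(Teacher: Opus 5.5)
Your proposal is correct and is essentially the paper's proof: the paper applies the naturality clause of Lemma~\ref{lem:cross-window} to the square $(\Sigma x)h=h'z$, then uses naturality of the shift isomorphisms \eqref{eq:shifted-layer-iso}, and gets the remaining squares from functoriality of the $H_i$. Your explicit re-derivation, via uniqueness of the upper lifts and of the (TR3) fillers, is the same argument the paper already gives inside the proof of that lemma.
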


\begin{proof}
	The right-hand square of the morphism of triangles is
	$(\Sigma x)h=h'z$, a commutative square from $\mD_r$ to $\mD_r^+$.
	Apply naturality of $H_i^\rightarrow$ from Lemma
	\ref{lem:cross-window}, and then naturality of the shift isomorphism
	\eqref{eq:shifted-layer-iso}.  This gives
	\[
	H_{i-1}^{[r]}(x)\delta_i^\Delta
	=\delta_i^{\Delta'}H_i^{[r]}(z).
	\]
	The remaining squares commute by functoriality of the $H_i$.
\end{proof}

\begin{proof}[Completion of the proof of Theorem \ref{thm:main-long-exact}]
	Exactness is Step~3, and naturality is Proposition
	\ref{prop:boundary-naturality}.
\end{proof}

\begin{cor}\label{cor:arbitrary-window}
	Let $m<n$ be integers and put $d=n-m$. Assume that
	$\mathcal A$ satisfies $(E_{d+1})$, that is,
	\[
	\operatorname{Hom}_{\mathcal C}
	(\Sigma^p\mathcal A,\mathcal A)=0
	\qquad
	(1\leq p\leq d+1).
	\]
	Set
	\[
	\mathcal D_{[n,m]}
	:=
	\Sigma^n\mathcal A
	*\Sigma^{n-1}\mathcal A
	*\cdots*
	\Sigma^m\mathcal A.
	\]
	Then there are canonical additive functors
	\[
	H_i^{[n,m]}\colon
	\mathcal D_{[n,m]}\longrightarrow\mathcal A,
	\qquad
	m\leq i\leq n,
	\]
	and, for each triangle
	\[
	\Delta=
	\bigl(
	X\xrightarrow{f}Y\xrightarrow{g}Z
	\xrightarrow{h}\Sigma X
	\bigr)
	\]
	with $X,Y,Z\in\mathcal D_{[n,m]}$, canonical connecting
	morphisms
	\[
	\delta_i^\Delta\colon
	H_i^{[n,m]}(Z)\longrightarrow
	H_{i-1}^{[n,m]}(X),
	\qquad
	m<i\leq n,
	\]
	such that
	\[
	\begin{aligned}
		0
		&\longrightarrow H_n^{[n,m]}(X)
		\longrightarrow H_n^{[n,m]}(Y)
		\longrightarrow H_n^{[n,m]}(Z) \\
		&\xrightarrow{\delta_n^\Delta}
		H_{n-1}^{[n,m]}(X)
		\longrightarrow H_{n-1}^{[n,m]}(Y)
		\longrightarrow H_{n-1}^{[n,m]}(Z) \\
		&\longrightarrow\cdots
		\longrightarrow H_m^{[n,m]}(X)
		\longrightarrow H_m^{[n,m]}(Y)
		\longrightarrow H_m^{[n,m]}(Z)
		\longrightarrow 0
	\end{aligned}
	\]
	is exact in $\mathcal A$. Moreover, this sequence is natural
	with respect to morphisms of triangles.
\end{cor}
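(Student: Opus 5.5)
The plan is to reduce Corollary \ref{cor:arbitrary-window} to Theorem \ref{thm:main-long-exact} by shifting. Put $\ell:=d=n-m\geq 1$. The hypothesis $(E_{d+1})$ is exactly the standing hypothesis \eqref{eq:global-vanishing} for this $\ell$. Hence all of Sections~\ref{section 3}--\ref{section 4} apply to the window $\mD_\ell=\mD_{[d,0]}$.

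The functor $\Sigma^{m}$ is a triangle autoequivalence of $\mC$ (with the usual sign convention on the connecting map). It maps $\mD_{[d,0]}$ isomorphically onto $\mD_{[n,m]}$, since $\Sigma^m(\Sigma^{j}\mA*\cdots)=\Sigma^{j+m}\mA*\cdots$ and $*$ commutes with $\Sigma$. I would define
\[
H_i^{[n,m]}:=H_{i-m}\circ\Sigma^{-m}\colon\mD_{[n,m]}\to\mA,
\qquad m\leq i\leq n.
\]
These are additive because $H_{i-m}$ and $\Sigma^{-m}$ are.

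Given a triangle $\Delta$ in $\mD_{[n,m]}$, apply $\Sigma^{-m}$. This gives the triangle
\[
\Sigma^{-m}X\xrightarrow{\Sigma^{-m}f}\Sigma^{-m}Y\xrightarrow{\Sigma^{-m}g}\Sigma^{-m}Z\xrightarrow{(-1)^m\Sigma^{-m}h}\Sigma\Sigma^{-m}X,
\]
which lies in $\mD_{[d,0]}$. Define $\delta_i^\Delta$ as the connecting morphism $\delta_{i-m}$ of Theorem \ref{thm:main-long-exact} for this shifted triangle. The sign only rescales the connecting maps by $\pm1$, which does not affect exactness or naturality, as remarked in Lemma \ref{lem:edge-characterization}. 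Alternatively, the sign can be absorbed by using the canonical identification $\Sigma\Sigma^{-m}\cong\Sigma^{-m}\Sigma$.

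Exactness then follows directly from Theorem \ref{thm:main-long-exact}, with indices shifted by $m$. For naturality, a morphism of triangles in $\mD_{[n,m]}$ is carried by $\Sigma^{-m}$ to a morphism of triangles in $\mD_{[d,0]}$, and Proposition \ref{prop:boundary-naturality} applies.

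The only point needing care is canonicity, i.e.\ that $H_i^{[n,m]}$ agrees with the layer functors $H_j^{[b,a]}$ built intrinsically on shifted intervals. Proposition \ref{prop 3.7}(2) asserts that suspending the canonical comparisons gives the same statement for every shifted interval. Alternatively, since $\Sigma^{-m}$ is an equivalence preserving the $s$-torsion pairs $(\mD_{[n,i]},\mD_{[i-1,m]})$, uniqueness of $s$-torsion decompositions identifies the intrinsic functors with the transported ones, as in the shifted window $\mD_r^+$ of Lemma \ref{lem:cross-window}. This bookkeeping of the sign and of the canonical identification is the main, though mild, obstacle; there is no new homological input.
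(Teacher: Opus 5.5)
Your proposal is correct and is essentially the paper's argument: both reduce to Theorem~\ref{thm:main-long-exact} through the autoequivalence $\Sigma^m$. The only difference is the direction of transport. The paper checks that $\mathcal B=\Sigma^m\mathcal A$ is again an extension-closed proper abelian subcategory satisfying $(E_{d+1})$, applies the theorem to $\mathcal B$ directly on $\mathcal D_{[n,m]}$, and sets $H_i^{[n,m]}=\Sigma^{-m}\widetilde H_{i-m}$, which avoids any sign on triangles; you instead keep $\mathcal A$ and move triangles into $\mathcal D_{[d,0]}$ by $\Sigma^{-m}$, correctly tracking the $(-1)^m$ sign on the third morphism.
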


\begin{proof}
	Put
	\[
	\mathcal B:=\Sigma^m\mathcal A.
	\]
	Since $\Sigma^m$ is a triangle autoequivalence,
	$\mathcal B$ is again a full, replete, extension-closed proper
	abelian subcategory of $\mathcal C$. Moreover, for
	$1\leq p\leq d+1$, there are natural isomorphisms
	\[
	\begin{aligned}
		\operatorname{Hom}_{\mathcal C}
		(\Sigma^p\mathcal B,\mathcal B)
		&=
		\operatorname{Hom}_{\mathcal C}
		(\Sigma^{p+m}\mathcal A,\Sigma^m\mathcal A) \\
		&\cong
		\operatorname{Hom}_{\mathcal C}
		(\Sigma^p\mathcal A,\mathcal A)
		=0.
	\end{aligned}
	\]
	Thus $\mathcal B$ satisfies $(E_{d+1})$.
	
	The length-$d$ window generated by $\mathcal B$ is
	\[
	\begin{aligned}
		\mathcal D_d(\mathcal B)
		&=
		\Sigma^d\mathcal B
		*\Sigma^{d-1}\mathcal B
		*\cdots*
		\mathcal B \\
		&=
		\Sigma^n\mathcal A
		*\Sigma^{n-1}\mathcal A
		*\cdots*
		\Sigma^m\mathcal A
		=
		\mathcal D_{[n,m]}.
	\end{aligned}
	\]
	Apply Theorem~\ref{thm:main-long-exact} to the proper
	abelian subcategory $\mathcal B$ and the integer $d$.
	Let
	\[
	\widetilde H_j\colon
	\mathcal D_{[n,m]}\longrightarrow\mathcal B,
	\qquad
	0\leq j\leq d,
	\]
	be the resulting homology functors. For $m\leq i\leq n$,
	define
	\[
	H_i^{[n,m]}
	:=
	\Sigma^{-m}\widetilde H_{i-m}.
	\]
	The equivalence
	\[
	\Sigma^{-m}\colon\mathcal B\longrightarrow\mathcal A
	\]
	is exact with respect to the transported abelian structure.
	Applying it to the natural long exact sequence supplied by
	Theorem~\ref{thm:main-long-exact} gives precisely the
	displayed sequence. Naturality follows from the naturality in
	Theorem~\ref{thm:main-long-exact}.
\end{proof}

\begin{rem}
\label{rem:comparison-thesis}
	Korteg{\aa}rd's finite-window theorem
	\cite[Project D, Theorem~4.5]{Kortegard2024}
	constructs homology on the window
	\[
	\Sigma^{[n,m]}\mathcal A
	=
	\Sigma^n\mathcal A
	*\Sigma^{n-1}\mathcal A
	*\cdots*
	\Sigma^m\mathcal A
	\]
	and establishes a long exact sequence under the hypothesis $(E_{n-m+3})$.
	In contrast, Corollary~\ref{cor:arbitrary-window} requires only $(E_{n-m+1})$.
	Thus the present result removes the two highest vanishing
	requirements
	\[
	\operatorname{Hom}_{\mathcal C}
	\bigl(\Sigma^{n-m+2}\mathcal A,\mathcal A\bigr)=0
	\]
	and
	\[
	\operatorname{Hom}_{\mathcal C}
	\bigl(\Sigma^{n-m+3}\mathcal A,\mathcal A\bigr)=0.
	\]
	
	The difference also concerns the construction. The proof in
	\cite[Project D]{Kortegard2024} uses exact-category
	truncation methods, whereas the present argument uses only the
	homology functors obtained from the unique block
	$s$-torsion-pair decompositions and the adjacent-window maps
	of Section~\ref{section 4}.
\end{rem}

\subsection{Sharpness of the negative vanishing range}

\begin{prop}\label{prop:sharp}
	For every integer $\ell\geq1$, there is a Krull--Schmidt triangulated category
	$\mC$ with an extension-closed proper abelian subcategory $\mA$ satisfying
	$E_\ell$ but not \eqref{eq:global-vanishing} such that the following two requirements cannot
	hold simultaneously:
	\begin{enumerate}
		\item additive functors $H_i:D_\ell\to\mA$, $0\leq i\leq\ell$, satisfy
		$H_i(\Sigma^j a)\cong a$ for $i=j$ and $H_i(\Sigma^j a)=0$ otherwise;
		\item every triangle with its three terms in $D_\ell$ induces the finite long
		exact sequence in the form of Theorem \ref{thm:main-long-exact}.
	\end{enumerate}
\end{prop}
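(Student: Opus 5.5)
Fix $\ell\geq1$. I would take $\mC$ to be the orbit category $D^b(\modu k)/\Sigma^{\ell+1}$ of the derived category of a field $k$. It is triangulated by Keller's theorem on orbit categories, and it is Krull--Schmidt, since every object is a finite direct sum of shifts of $k$ with finite-dimensional endomorphism rings. Let $\mA=\add k$ be the image of $\modu k$; it is semisimple abelian. In $\mC$,
\[
\HomC(\Sigma^pk,k)\cong\bigoplus_{n\in\mathbb Z}\mathrm{Hom}_{D^b}\bigl(\Sigma^{p}k,\Sigma^{n(\ell+1)}k\bigr),
\]
which is nonzero exactly when $\ell+1\mid p$. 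So $(E_\ell)$ holds and $(E_{\ell+1})$ fails, because $\Sigma^{\ell+1}k\cong k$.

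Next I would check that $\mA$ is extension closed and proper. A triangle $a\to a'\to a''\to\Sigma a$ with terms in $\mA$ has connecting map in $\HomC(a'',\Sigma a)=0$, since $\ell\geq1$ gives $\ell+1\nmid 1$. Hence these triangles are exactly the split ones. Extension closure follows, and so does properness, since in the semisimple $\mA$ every short exact sequence splits. If the periodic orbit category causes trouble with the Krull--Schmidt or triangulated structure, I would fall back to a periodic derived orbit of a hereditary algebra, as in Section 8; semisimplicity is only used to make the properness check easy.

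For the contradiction, consider the triangle
\[
\Sigma^{\ell}k\xrightarrow{1}\Sigma^{\ell}k\longrightarrow 0\longrightarrow\Sigma^{\ell+1}k.
\]
This does not use the vanishing failure, and I would not use it. Instead I would exploit the nonzero map $\theta:\Sigma^{\ell+1}k\xrightarrow{\sim}k$, or equivalently its rotation $\Sigma^{\ell}k\to\Sigma^{-1}k$, and complete it to the triangle
\[
\Sigma^{-1}k\longrightarrow 0\longrightarrow k\xrightarrow{\ \sim\ }k .
\]
In $\mC$ we have $\Sigma^{-1}k\cong\Sigma^{\ell}k$, so this is the triangle
\[
X=\Sigma^{\ell}k\longrightarrow Y=0\longrightarrow Z=k\xrightarrow{h}\Sigma X\cong k,
\]
with $h$ an isomorphism and all three terms in $\mD_\ell$. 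By requirement (1), $H_\ell(X)=k$, $H_0(X)=0$, $H_0(Z)=k$ and $H_\ell(Z)=0$. The long exact sequence of requirement (2) begins
\[
0\to H_\ell(X)\to H_\ell(Y)=0,
\]
which forces $H_\ell(X)=0$, contradicting $H_\ell(X)\cong k$. The same contradiction appears at the bottom: $H_0(Y)=0\to H_0(Z)\to0$ forces $H_0(Z)=0$, while $H_0(Z)\cong k$. This works for every $\ell\geq1$, which gives the uniform sharpness.

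The main obstacle I expect is making sure requirement (1) is consistent in this ambient category. In $\mC$, $\Sigma^{\ell+1}a\cong a$, so one should check that no coincidence $\Sigma^ja\cong\Sigma^{j'}a$ with $0\leq j\neq j'\leq\ell$ exists; otherwise (1) would already be contradictory for trivial reasons. This holds because in the orbit category $\Sigma^jk\not\cong\Sigma^{j'}k$ for $0<|j-j'|<\ell+1$. One also needs to confirm that the objects $\Sigma^{\ell}k$ and $k$ lie in $\mD_\ell$, which is immediate.
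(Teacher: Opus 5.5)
Your proposal is correct and is essentially the paper's argument: you use the same orbit category $D^b(\modu k)/\Sigma^{\ell+1}$ with $\mA=\add(k)$, and your triangle $\Sigma^{-1}k\to 0\to k\xrightarrow{\sim}k$ (with $\Sigma^{-1}k\cong\Sigma^\ell k$) is the paper's triangle $\Sigma^\ell s\to 0\to\Sigma^{\ell+1}s\xrightarrow{\sim}\Sigma^{\ell+1}s$ with $\Sigma^{\ell+1}s\cong s$, which forces the impossible exact beginning $0\to k\to 0$. Note that this triangle is just the rotation of the identity triangle on $\Sigma^\ell k$ that you set aside, and the failure of $(E_{\ell+1})$, namely $\Sigma^{\ell+1}k\cong k$, is used precisely to put the rotated third term back into $\mD_\ell$.
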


\begin{proof}
	Let $k$ be a field, put $N=\ell+1$, and take
	\[
	\mC=D^b(\mathrm{mod}\,k)/\Sigma^N.
	\]
	The orbit category is triangulated by Keller's theorem \cite{Keller2005}:
	$\mathrm{mod}\,k$ is hereditary, $\Sigma^N$ is standard, and its orbits meet
	a finite interval of shifts. Let $s$ be the image of $k$, and put
	$\mA=\add(s)$. The orbit formula gives
	\[
	\mathrm{Hom}_{\mC}(\Sigma^i s,\Sigma^j s)=
	\begin{cases}
		k,&i\equiv j\pmod N,\\
		0,&i\not\equiv j\pmod N.
	\end{cases}
	\]
	Every object is a finite direct sum of the $N$ distinct shifts of $s$,
	whose endomorphism rings are $k$. Thus $\mC$ is Krull--Schmidt.
	Since $N\geq2$, $	\mathrm{Hom}_{\mC}(\mA,\Sigma\mA)=0$, so $\mA$ is extension closed
	and proper abelian with its semisimple structure. The displayed formula
	proves $E_\ell$ and the failure of \eqref{eq:global-vanishing}.
	
	Rotate the identity triangle of $\Sigma^\ell s$. It gives a distinguished
	triangle
	\[
	\Sigma^\ell s\longrightarrow0\longrightarrow
	\Sigma^{\ell+1}s\xrightarrow{\sim}\Sigma^{\ell+1}s.
	\]
	The third term is isomorphic to $s$, and all three terms belong to $D_\ell$.
	Requirements (1) and (2) would give an exact beginning
	$0\to s\to0$, a contradiction.
\end{proof}

\begin{rem}
	This is sharpness of the \emph{uniform theorem with the stated normalization
		and endpoint exactness}. It does not assert that \eqref{eq:global-vanishing} is necessary
	for each particular pair $(\mC,\mA)$, or exclude other homological structures
	with different endpoint behavior.
\end{rem}

\section{Top-layer intermediate categories}

Throughout this section, we assume that $\mA$ is an extension-closed proper abelian subcategory in a Krull-Schmidt triangulated category $\mC$ satisfying \eqref{eq:global-vanishing}, where $\ell\geq 1$ is a positive integer. We write $\mA_i:=\Sigma^i\mA$ and, for $0\leq m\leq \ell$, set $\mathcal{D}_m:=\mA_m*\mA_{m-1}*\cdots* \mA_0, \mathcal{D}_0=\mA$.

\subsection{Intermediate subcategories and top-layer purity}

\begin{lem}\label{lem 5.2}\rm(Homotopy pullback)
	
	Let 
	\[\begin{tikzcd}
		b & x & c & {\Sigma b},
		\arrow["\alpha", from=1-1, to=1-2]
		\arrow[from=1-2, to=1-3]
		\arrow[from=1-3, to=1-4]
	\end{tikzcd}\]
	\[\begin{tikzcd}
		t & x & r & {\Sigma t}
		\arrow["\iota", from=1-1, to=1-2]
		\arrow["\psi", from=1-2, to=1-3]
		\arrow[from=1-3, to=1-4]
	\end{tikzcd}\]
	be distinguished triangles. Put $\beta:=\psi\alpha$, and complete $\beta$ to a distinguished triangle 
	\[\begin{tikzcd}
		m & b & r & {\Sigma m}.
		\arrow["\mu", from=1-1, to=1-2]
		\arrow["\beta", from=1-2, to=1-3]
		\arrow[from=1-3, to=1-4]
	\end{tikzcd}\]
	Then there are distinguished triangles 
	\[\begin{tikzcd}
		{\Sigma^{-1}c} & m & t & c,
		\arrow[from=1-1, to=1-2]
		\arrow[from=1-2, to=1-3]
		\arrow[from=1-3, to=1-4]
	\end{tikzcd}\]
	\[\begin{tikzcd}
		m & {b\oplus t} & x & {\Sigma m}
		\arrow["{(\mu, -v)}", from=1-1, to=1-2]
		\arrow["{(\alpha,\iota)}", from=1-2, to=1-3]
		\arrow[from=1-3, to=1-4]
	\end{tikzcd}\]
	for a morphism $v:m\to t$.
\end{lem}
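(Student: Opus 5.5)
This is the standard homotopy pullback, and I would obtain both triangles from a single application of the octahedral axiom to the composite $\beta=\psi\alpha\colon b\to x\to r$. The three triangles attached to this composite are the given triangle $b\xrightarrow{\alpha}x\to c\to\Sigma b$ (cone of $\alpha$), the given $t\xrightarrow{\iota}x\xrightarrow{\psi}r\to\Sigma t$ (cone of $\psi$, rotated), and $m\xrightarrow{\mu}b\xrightarrow{\beta}r\to\Sigma m$ (cone of $\beta$). It is cleanest to rotate everything so that the cones sit in third position:
\[
b\xrightarrow{\alpha}x\to c\to\Sigma b,\qquad
x\xrightarrow{\psi}r\to\Sigma t\xrightarrow{-\Sigma\iota}\Sigma x,\qquad
b\xrightarrow{\beta}r\to\Sigma m\xrightarrow{-\Sigma\mu}\Sigma b.
\]

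First I would apply (TR4) to these three triangles. It yields a distinguished triangle $c\to\Sigma m\to\Sigma t\to\Sigma c$ whose maps commute with the structure maps of the octahedron. Rotating it backwards and applying $\Sigma^{-1}$ gives the first claimed triangle $\Sigma^{-1}c\to m\to t\to c$. Call its middle map $v\colon m\to t$; up to the sign bookkeeping, it is $\Sigma^{-1}$ of the octahedral map $\Sigma m\to\Sigma t$. The compatibility squares of the octahedron give the identity $\iota v=\alpha\mu$ up to sign, and I would fix the sign of $v$ so that $\alpha\mu-\iota v=0$.

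For the second triangle I would use the standard fact (e.g.\ Neeman's homotopy cartesian squares, or a direct argument) that an octahedron makes the square
\[
\begin{array}{ccc}
m & \xrightarrow{\ \mu\ } & b\\
{\scriptstyle v}\downarrow & & \downarrow{\scriptstyle\alpha}\\
t & \xrightarrow{\ \iota\ } & x
\end{array}
\]
homotopy cartesian. Concretely, I would take the triangle $m\xrightarrow{(\mu,-v)^{T}}b\oplus t\to w\to\Sigma m$ and compare it with $b\oplus t\xrightarrow{(\alpha,\iota)}x$. This composite vanishes by the choice of sign, so $(\alpha,\iota)$ factors through some $w\to x$. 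To show this factor map is an isomorphism I would build a second octahedron, for the composite $b\to b\oplus t\to w$, and compare cones: the cone of $b\to w$ is $\Sigma m\oplus$-free and identifies with $t\oplus\ldots$; the five lemma on long exact $\mathrm{Hom}(-,?)$ sequences then finishes.

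The cleaner route I would actually try first is the following. Take the octahedron on $m\xrightarrow{\mu}b\xrightarrow{\alpha}x$ and on $m\xrightarrow{v}t\xrightarrow{\iota}x$, and invoke the known lemma that the mapping-cone triangle of a square obtained from (TR4) is distinguished; this is Neeman, \emph{Triangulated Categories}, Lemma 1.4.4. The main obstacle is exactly this step: checking that the specific $v$ produced by the octahedron makes $m\to b\oplus t\to x\to\Sigma m$ distinguished, rather than merely showing that the composite vanishes. Throughout, the connecting map $x\to\Sigma m$ is taken to be the composite $x\to r\to\Sigma m$ (equivalently $x\to c\to\Sigma m$), and signs must be tracked carefully.
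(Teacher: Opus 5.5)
Your first triangle is obtained exactly as in the paper: apply the octahedron to $b\xrightarrow{\alpha}x\xrightarrow{\psi}r$ and rotate. Your observation about the octahedral map $v$ is also correct. It satisfies $\iota v=\alpha\mu$ and $(\Sigma v)\circ(r\to\Sigma m)=(r\to\Sigma t)$. So it is \emph{one} TR3 fill-in of the morphism of triangles $(v,\alpha,1_r)$ from $m\xrightarrow{\mu}b\xrightarrow{\beta}r\to\Sigma m$ to $t\xrightarrow{\iota}x\xrightarrow{\psi}r\to\Sigma t$. Such a morphism exists because $\psi\alpha=\beta$.

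The gap is in the second triangle, and it comes from a requirement you impose yourself and then leave unresolved. The paper takes this morphism of triangles and applies the homotopy-cartesian-square criterion (the completion lemma from Neeman's book that you cite, in rotated form). That lemma says the first component can be \emph{chosen} so that the left square is homotopy cartesian. In other words, for that choice $m\xrightarrow{(\mu,-v)^{T}}b\oplus t\xrightarrow{(\alpha,\iota)}x\to\Sigma m$ is distinguished. The statement asks only for \emph{some} $v$, and it does not tie $v$ to the map $m\to t$ in the first triangle, so this finishes the proof.

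You instead want the specific octahedral $v$. The lemma is an existence statement; it does not say that an arbitrary fill-in works, such as the one produced by an arbitrary octahedron. Your description of it is also not what it states: it is not a claim that ``the mapping-cone triangle of a square obtained from (TR4) is distinguished'' via octahedra on $m\to b\to x$ and $m\to t\to x$.

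Your direct argument does not close the gap either. A factorization $w\to x$ of $(\alpha,\iota)$ through the cone of $(\mu,-v)^{T}$ exists, but it is not unique. The five-lemma argument needs $w\to x$ to be a component of a morphism of triangles whose other two components are isomorphisms, and your ``second octahedron'' sketch does not produce one. Drop the insistence that $v$ come from the octahedron, and your proposal becomes the paper's proof.
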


\begin{proof}
	Apply the octahedral axiom to the composable maps $b\xrightarrow{\alpha}x\xrightarrow{\psi}r$. Rotating and shifting the third octahedral triangle gives the first required triangle.
	
	Consider the following morphism of triangles.
	\[\begin{tikzcd}
		m & b & r & {} \\
		t & x & r & {}
		\arrow["\mu", from=1-1, to=1-2]
		\arrow["v"', dashed, from=1-1, to=2-1]
		\arrow["\beta", from=1-2, to=1-3]
		\arrow["\alpha", from=1-2, to=2-2]
		\arrow["{+1}", from=1-3, to=1-4]
		\arrow[equals, from=1-3, to=2-3]
		\arrow["\iota"', from=2-1, to=2-2]
		\arrow["\psi"', from=2-2, to=2-3]
		\arrow["{+1}", from=2-3, to=2-4]
	\end{tikzcd}\]
	The homotopy-cartesian-square criterion says that the left square is homotopy cartesian, which gives the last required triangle.
\end{proof}

\begin{definition}
	Let $\mA$ be an extension-closed proper abelian subcategory in $\mC$. A subcategory $\mathcal{X}\subseteq \mC$ is called an $\ell$-layer intermediate subcategory if $\mD_{\ell-1}\subseteq \mX\subseteq \mD_{\ell}$ and $\mX$ is closed under extensions and direct summands. It is automatically closed under finite direct sums by the split triangle.
\end{definition}

\begin{lem}\label{lem 5.4}
	Let $\mX$ be an $\ell$-layer intermediate subcategory satisfying $\mX\subseteq \mD_{\ell-1}*\Sigma^\ell\mA$. Then $\Sigma^\ell H_\ell(x)\in \mX$, for every $x\in \mX$.
\end{lem}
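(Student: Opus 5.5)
The plan is to realize $\Sigma^\ell H_\ell(x)$ as a direct summand of an object that is an extension of something in $\mD_{\ell-1}$ by $x$. Closure of $\mX$ under extensions and summands then finishes the argument. Fix $x\in\mX$. By the hypothesis $\mX\subseteq\mD_{\ell-1}*\Sigma^\ell\mA$, choose a triangle
\[
d\xrightarrow{\iota}x\xrightarrow{\psi}\Sigma^\ell a\longrightarrow\Sigma d,\qquad d\in\mD_{\ell-1},\ a\in\mA.
\]
Alongside it, take the top-layer decomposition triangle \eqref{eq:top-truncation} of $x$:
\[
\Sigma^\ell H_\ell(x)\xrightarrow{\iota_x^\ell}x\xrightarrow{\pi_x^\ell}x^{<\ell}\to\Sigma^{\ell+1}H_\ell(x).
\]

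First I identify the composite $\beta:=\psi\iota_x^\ell$. The top-layer counit is natural. By Proposition~\ref{prop 3.7}(1), $\iota^\ell_{\Sigma^\ell a}$ is an isomorphism identifying $H_\ell(\Sigma^\ell a)$ with $a$. Together these give $\beta=\Sigma^\ell\varphi$, where $\varphi:=H_\ell(\psi):H_\ell(x)\to a$ is a morphism in $\mA$.

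Next I apply Theorem~\ref{thm:main-long-exact} to the triangle $d\to x\to\Sigma^\ell a\to\Sigma d$, all of whose terms lie in $\mD_\ell$. Since $H_\ell(d)=0$ by Proposition~\ref{prop 3.7}(4), the sequence begins
\[
0\to H_\ell(x)\xrightarrow{\varphi}a\to H_{\ell-1}(d)\to\cdots .
\]
Hence $\varphi$ is monic in $\mA$; let $K:=\mathrm{Coker}\,\varphi\in\mA$. Properness realizes $0\to H_\ell(x)\to a\to K\to0$ as a triangle. Shifting by $\Sigma^\ell$ and rotating gives a completion of $\beta$:
\[
\Sigma^{\ell-1}K\xrightarrow{\mu}\Sigma^\ell H_\ell(x)\xrightarrow{\beta}\Sigma^\ell a\to\Sigma^\ell K .
\]

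Now I invoke Lemma~\ref{lem 5.2} with the following data:
\begin{itemize}
\item first triangle: $b=\Sigma^\ell H_\ell(x)$ and $\alpha=\iota_x^\ell$;
\item second triangle: $t=d$, with maps $\iota$ and $\psi$, and $r=\Sigma^\ell a$;
\item completion of $\beta$: $m=\Sigma^{\ell-1}K$.
\end{itemize}
The lemma produces a triangle
\[
\Sigma^{\ell-1}K\longrightarrow\Sigma^\ell H_\ell(x)\oplus d\longrightarrow x\longrightarrow\Sigma^\ell K .
\]
Thus $\Sigma^\ell H_\ell(x)\oplus d\in\Sigma^{\ell-1}\mA*\mX\subseteq\mD_{\ell-1}*\mX\subseteq\mX*\mX\subseteq\mX$, using $\mD_{\ell-1}\subseteq\mX$ and extension closure. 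Closure under direct summands then gives $\Sigma^\ell H_\ell(x)\in\mX$.

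The main point requiring care is the identification $\beta=\Sigma^\ell H_\ell(\psi)$ and the injectivity of $H_\ell(\psi)$. Injectivity is exactly where the long exact sequence, and hence $(E_{\ell+1})$, enters. Without it, the cone of $\beta$ would not be a single shifted object $\Sigma^{\ell-1}K$ lying in $\mD_{\ell-1}$.
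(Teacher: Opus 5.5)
Your proof is correct. It follows the same skeleton as the paper: Lemma~\ref{lem 5.2} applied to two triangles with middle term $x$, followed by extension and summand closure. However, you assign the two triangles to opposite roles, and this changes what has to be checked about $m$.

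The paper takes the hypothesis triangle $b\to x\to\Sigma^\ell a$ as the first triangle and the top truncation $\Sigma^\ell H_\ell(x)\to x\to x^{<\ell}$ as the second. Its $m$ is therefore the cocone of a map $b\to x^{<\ell}$ between objects of $\mD_{\ell-1}$. To get $m\in\mD_{\ell-1}$, the paper needs four ingredients:
\begin{enumerate}
\item the octahedral axiom, to obtain $m\in\mA_{\ell-1}*\mA_\ell$;
\item Lemma~\ref{lem 5.1}, to place $m$ in $\mD_\ell$ so that $H_\ell(m)$ is defined at all;
\item the top of the long exact sequence for $m\to b\to x^{<\ell}$, to get $H_\ell(m)=0$;
\item Proposition~\ref{prop 3.7}(4).
\end{enumerate}

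In your version, $\beta$ is a morphism between pure top layers. Once $\varphi$ is monic, properness gives $m\cong\Sigma^{\ell-1}\mathrm{Coker}\,\varphi$ directly. You need no octahedron, no exchange lemma and no kernel characterization of $\mD_{\ell-1}$, and you get an explicit description of $m$. The cost is only naturality of the top-layer counit, which yields $\beta=\Sigma^\ell H_\ell(\psi)$.

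Your closing remark slightly overstates the role of the long exact sequence. Monicity of $\varphi$ already follows from Lemma~\ref{lem:top-representability} together with $\mathrm{Hom}_{\mC}(\Sigma^\ell\mA,\mD_{\ell-1})=0$, exactly as in the first part of the proof of Proposition~\ref{prop:top-four-term}. The hypothesis $(E_{\ell+1})$ actually enters earlier, in the existence and uniqueness of the top-layer decomposition that defines $H_\ell$.
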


\begin{proof}
	Fix $x\in \mX\subseteq \mD_{\ell-1}*\Sigma^\ell\mA$. By assumption, there is a distinguished triangle 
\[\begin{tikzcd}
	b & x & {\Sigma^\ell a} & {}
	\arrow["\alpha", from=1-1, to=1-2]
	\arrow[from=1-2, to=1-3]
	\arrow["{+1}", from=1-3, to=1-4]
\end{tikzcd}\]
where $b\in \mD_{\ell-1}, a\in \mA$.

The canonical highest-layer triangle is 
\[\begin{tikzcd}
	{\Sigma^\ell H_\ell(x)} & x & r & {}
	\arrow["\iota", from=1-1, to=1-2]
	\arrow["\psi", from=1-2, to=1-3]
	\arrow["{+1}", from=1-3, to=1-4]
\end{tikzcd}\]
where $r\in \mD_{\ell-1}$.

Complete $\beta:=\psi\alpha:b\to r$ to a triangle, applying the octahedral axiom we get the following diagram.
\[\begin{tikzcd}
	b & x & {\Sigma^\ell a} & {} \\
	b & r & {\Sigma m} \\
	& {\Sigma^{\ell+1}H_{\ell}(x)} & {\Sigma^{\ell+1}H_{\ell}(x)} \\
	& {} & {}
	\arrow["\alpha", from=1-1, to=1-2]
	\arrow[equals, from=1-1, to=2-1]
	\arrow[from=1-2, to=1-3]
	\arrow["\psi"', from=1-2, to=2-2]
	\arrow["{+1}", from=1-3, to=1-4]
	\arrow[dashed, from=1-3, to=2-3]
	\arrow["\beta"', from=2-1, to=2-2]
	\arrow[from=2-2, to=2-3]
	\arrow[from=2-2, to=3-2]
	\arrow[dashed, from=2-3, to=3-3]
	\arrow[equals, from=3-2, to=3-3]
	\arrow["{+1}", from=3-2, to=4-2]
	\arrow["{+1}", from=3-3, to=4-3]
\end{tikzcd}\]
Then $m\in \mA_{\ell-1}*\mA_\ell\subseteq \mA_\ell*\mA_{\ell-1}\subseteq \mD_{\ell}$ by Lemma \ref{lem 5.1}. Applying Theorem \ref{thm:main-long-exact} to the triangle $m\to b\xrightarrow{\beta} r\xrightarrow{+1}$, we have the following exact sequence in $\mA$.
\[\begin{tikzcd}
	0 & {H_\ell(m)} & {H_\ell(b)} & {H_\ell(r)}
	\arrow[from=1-1, to=1-2]
	\arrow[from=1-2, to=1-3]
	\arrow[from=1-3, to=1-4]
\end{tikzcd}\]
$H_\ell(b)=0=H_\ell(r)$ forces $H_\ell(m)=0$ since $b,r\in \mD_{\ell-1}$. Hence $m\in \mD_{\ell-1}\subseteq \mX$.

By Lemma \ref{lem 5.2}, there exists the following distinguished triangle.
\[\begin{tikzcd}
	m & {b\oplus\Sigma^\ell H_{\ell}(x)} & x & {}
	\arrow[from=1-1, to=1-2]
	\arrow[from=1-2, to=1-3]
	\arrow["{+1}", from=1-3, to=1-4]
\end{tikzcd}\]
$m, x\in \mX$ implies $b\oplus\Sigma^\ell H_\ell(x)\in \mX$, and its direct summand $\Sigma^\ell H_{\ell}(x)$ belongs to $\mX$ by definition.
\end{proof}

\subsection{Criteria for the highest-layer exchange equality}
The standing hypothesis $(E_{\ell+1})$ implies, by the extension closure
and associativity of extension, that
\begin{equation}
	\mD_{\ell-1}*\mA_\ell
	\subseteq
	\mA_\ell*\mD_{\ell-1}.
	\label{eq:automatic-exchange-inclusion}
\end{equation}
The purpose of this subsection is to give usable conditions under which the reverse inclusion also holds.

\begin{prop}
	\label{prop:top-exchange-factorization}
	Assume $(E_{\ell+1})$. The following conditions are equivalent.
	\begin{enumerate}[label=\textup{(\arabic*)}]
		\item
		The highest-layer exchange equality holds:
		\[
		\Sigma^\ell\mA*\mD_{\ell-1}
		=
		\mD_{\ell-1}*\Sigma^\ell\mA.
		\tag{$\mathrm{EX}_\ell$}
		\]
		
		\item
		Every morphism
		\[
		\gamma\colon d\longrightarrow\Sigma^{\ell+1}a,
		\qquad
		d\in\mD_{\ell-1},\quad a\in\mA,
		\]
		factors through an object of $\Sigma^\ell\mA$.
	\end{enumerate}
\end{prop}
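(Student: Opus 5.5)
We prove that (2) holds if and only if $\Sigma^\ell\mA*\mD_{\ell-1}\subseteq\mD_{\ell-1}*\Sigma^\ell\mA$. The reverse inclusion is \eqref{eq:automatic-exchange-inclusion}, so this suffices.

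The tool is a dictionary between triangles and morphisms. An object $x\in\Sigma^\ell\mA*\mD_{\ell-1}$ is the same as a triangle
\[
\Sigma^\ell a\longrightarrow x\longrightarrow d\xrightarrow{\gamma}\Sigma^{\ell+1}a,
\qquad d\in\mD_{\ell-1},\quad a\in\mA.
\]
Equivalently, $x$ is the shifted cocone of some morphism $\gamma\colon d\to\Sigma^{\ell+1}a$. Conversely, every such $\gamma$ produces an object of $\Sigma^\ell\mA*\mD_{\ell-1}$.

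\emph{(2)$\Rightarrow$(1).} Take $x$ with its triangle as above, and factor $\gamma=\Sigma\theta\circ\phi$ with $\phi\colon d\to\Sigma^{\ell+1}a'$ and $\theta\colon a'\to a$ in $\mA$. Here we read "factors through $\Sigma^\ell\mA$" in the shifted form, i.e.\ through $\Sigma\Sigma^\ell\mA$, since a map $d\to\Sigma^{\ell+1}a$ cannot factor through $\Sigma^\ell\mA$ nontrivially in degree-correct form. Pinning down this reading is the first task; the factorization must be through an object in the top layer after a shift.

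Apply the octahedral axiom to the composite $d\xrightarrow{\phi}\Sigma^{\ell+1}a'\xrightarrow{\Sigma\theta}\Sigma^{\ell+1}a$. This yields a triangle
\[
\mathrm{cocone}(\phi)\longrightarrow\mathrm{cocone}(\gamma)\longrightarrow\mathrm{cocone}(\Sigma\theta)\longrightarrow.
\]
The third term $\Sigma^{\ell}\mathrm{Cone}(\theta)$ lies in $\Sigma^\ell(\mA*\Sigma\mA)$. By properness, a cone of a map in $\mA$ lies in $\Sigma\mathrm{Ker}\,\theta*\mathrm{Coker}\,\theta$, as in Lemma~\ref{lem 5.1}. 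The hard part is choosing the factorization so that the pieces land in $\mD_{\ell-1}*\Sigma^\ell\mA$. I would first try to use (2) more cleverly: let the intermediate object $\Sigma^\ell c$ satisfy $\gamma=\psi\phi$ with $\phi\colon d\to\Sigma^\ell c$ and $\psi\colon\Sigma^\ell c\to\Sigma^{\ell+1}a$. Then apply the homotopy pullback, Lemma~\ref{lem 5.2}, to compare the cocone of $\gamma$ with the cocone $d'$ of $\phi$. Since $\mathrm{Hom}(\mD_{\ell-1},\Sigma^\ell\mA)$ maps land in $\mD_{\ell-1}*\ldots$, $d'$ lies in $\Sigma^{\ell-1}\mA*\mD_{\ell-1}\subseteq\mD_{\ell-1}$ after a shift. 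The octahedron on $\gamma=\psi\phi$ then gives a triangle
\[
d'\longrightarrow x\longrightarrow\Sigma^{-1}\mathrm{cone}(\psi)\longrightarrow.
\]
Here $\Sigma^{-1}\mathrm{cone}(\psi)\in\Sigma^\ell\mA*\Sigma^\ell\mA=\Sigma^\ell\mA$, because $\psi$ is a morphism $\Sigma^\ell c\to\Sigma^{\ell+1}a$, i.e.\ an extension class. Hence $x\in\mD_{\ell-1}*\Sigma^\ell\mA$. The degree bookkeeping for $d'$ is the main obstacle; I expect it to need Lemma~\ref{lem 5.1} and the extension closure of Proposition~\ref{prop 3.3}.

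\emph{(1)$\Rightarrow$(2).} Given $\gamma$, form $x=\mathrm{cocone}$. By (1) there is a triangle $b\to x\to\Sigma^\ell a''\to$ with $b\in\mD_{\ell-1}$. Apply Lemma~\ref{lem 5.2}, in the same manner as in Lemma~\ref{lem 5.4}, to the two triangles through $x$. The vanishing $\mathrm{Hom}(\Sigma^\ell\mA,\mD_{\ell-1})=0$ forces the composite $\Sigma^\ell a\to x\to\Sigma^\ell a''$ to mediate the factorization of $\gamma$ through the top layer. Finally, read off the factorization using the uniqueness of the top-layer decomposition (Proposition~\ref{prop 3.7}(3)), which identifies $\Sigma^\ell a$ with a summand piece of $\Sigma^\ell H_\ell(x)$.
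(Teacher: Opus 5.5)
\textbf{(1)$\Rightarrow$(2): genuine gap.} You never exhibit an object of $\Sigma^\ell\mA$ through which $\gamma$ factors. Write $\Sigma^\ell a\xrightarrow{\alpha}x\to d\xrightarrow{\gamma}\Sigma^{\ell+1}a$ and $b\to x\xrightarrow{q}\Sigma^\ell a''\to\Sigma b$ with $b\in\mD_{\ell-1}$. Then $q\alpha=\Sigma^\ell f$ for a unique $f\colon a\to a''$.

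You could apply (TR3) to the square $(1_{\Sigma^\ell a},q)$, or use the octahedron behind Lemma~\ref{lem 5.2}. Either way, the object through which $\gamma$ factors is $\operatorname{cone}(\Sigma^\ell f)$. By the argument of Lemma~\ref{lem 5.1}, this cone lies in general only in $\Sigma^{\ell+1}\mathrm{Ker} f*\Sigma^\ell\mathrm{Coker} f$. It lies in $\Sigma^\ell\mA$ only if $f$ is a monomorphism, and proving that $f$ is monic is the step your sketch never states.

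The paper obtains it from the top of the long exact sequence in Theorem~\ref{thm:main-long-exact}:
\begin{itemize}
\item $H_\ell(\alpha)\colon a\to H_\ell(x)$ is an isomorphism, since $H_\ell(d)=0$;
\item $H_\ell(q)\colon H_\ell(x)\to a''$ is monic, since $H_\ell(b)=0$;
\item hence $f=H_\ell(q)H_\ell(\alpha)$ is monic.
\end{itemize}
A direct argument also works: $\operatorname{Hom}_{\mC}(\Sigma^\ell a_0,-)$ vanishes on $\Sigma^{-1}d$ and on $b$ by $(E_{\ell+1})$. So composing with $\alpha$ and then with $q$ is injective, and $\operatorname{Hom}_{\mA}(a_0,f)$ is injective for every $a_0$.

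With $c=\mathrm{Coker} f$, properness gives a triangle $\Sigma^\ell a\xrightarrow{\Sigma^\ell f}\Sigma^\ell a''\to\Sigma^\ell c\xrightarrow{\Sigma^\ell\eta}\Sigma^{\ell+1}a$. Then (TR3) yields $u\colon d\to\Sigma^\ell c$ with $\gamma=(\Sigma^\ell\eta)u$.

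Your remark that $\operatorname{Hom}(\Sigma^\ell\mA,\mD_{\ell-1})=0$ ``forces the composite to mediate the factorization'' touches one ingredient of this injectivity but never draws the conclusion. Your closing appeal to Proposition~\ref{prop 3.7}(3) is also misdirected. The triangle $\Sigma^\ell a\to x\to d$ is already the top-layer decomposition, so $a\cong H_\ell(x)$ outright, not a ``summand piece''.

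\textbf{(2)$\Rightarrow$(1): correct in its second form, and it is the paper's argument.} You factor $\gamma=\psi\phi$ through $\Sigma^\ell c$. Rotating gives $\operatorname{cocone}(\phi)\in\Sigma^{\ell-1}\mA*\mD_{\ell-1}\subseteq\mD_{\ell-1}$, and $\operatorname{cocone}(\psi)\in\Sigma^\ell\mA*\Sigma^\ell\mA=\Sigma^\ell\mA$. The octahedron for $\gamma=\psi\phi$ then gives the triangle $\operatorname{cocone}(\phi)\to x\to\operatorname{cocone}(\psi)$.

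The ``degree bookkeeping'' you worry about is just one rotation plus extension closure of $\mD_{\ell-1}$ (Proposition~\ref{prop 3.3}). Neither Lemma~\ref{lem 5.1} nor Lemma~\ref{lem 5.2} is needed.

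Discard your opening reinterpretation, though. Morphisms $d\to\Sigma^\ell c$ need not vanish, so the literal reading of (2) is the intended one. Factoring through $\Sigma^{\ell+1}\mA$ instead would make (2) vacuous, since $\gamma=1\circ\gamma$.
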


\begin{proof}
	We first prove that \textup{(2)} implies \textup{(1)}. In view of
	\eqref{eq:automatic-exchange-inclusion}, it remains to establish
	\[
	\Sigma^\ell\mA*\mD_{\ell-1}
	\subseteq
	\mD_{\ell-1}*\Sigma^\ell\mA.
	\]
	Let $x\in\Sigma^\ell\mA*\mD_{\ell-1}$. Choose a distinguished
	triangle
	\begin{equation}
		\Sigma^\ell a\longrightarrow x\longrightarrow d
		\xrightarrow{\gamma}\Sigma^{\ell+1}a
		\label{eq:top-first-triangle}
	\end{equation}
	with $a\in\mA$ and $d\in\mD_{\ell-1}$. By
	\textup{(2)}, there are $b\in\mA$ and morphisms
	\[
	d\xrightarrow{u}\Sigma^\ell b
	\xrightarrow{v}\Sigma^{\ell+1}a
	\]
	such that $\gamma=vu$.
	
	Complete $u$ to a distinguished triangle
	\[
	m\longrightarrow d\xrightarrow{u}\Sigma^\ell b
	\longrightarrow\Sigma m.
	\]
	After rotation, this gives a distinguished triangle
	\[
	\Sigma^{\ell-1}b\longrightarrow m\longrightarrow d
	\longrightarrow\Sigma^\ell b.
	\]
	Both $\Sigma^{\ell-1}b$ and $d$ belong to $\mD_{\ell-1}$.
	Since $\mD_{\ell-1}$ is extension closed, it follows that
	\begin{equation}
		m\in\mD_{\ell-1}.
		\label{eq:m-in-lower-window}
	\end{equation}
	
	The shifted morphism $\Sigma^{-\ell}v\colon b\to\Sigma a$ can be
	completed to a distinguished triangle
	\[
	a\longrightarrow c\longrightarrow b
	\xrightarrow{\Sigma^{-\ell}v}\Sigma a.
	\]
	Because $\mA$ is extension closed, $c\in\mA$. Shifting this triangle
	by $\ell$ yields
	\[
	\Sigma^\ell a\longrightarrow\Sigma^\ell c
	\longrightarrow\Sigma^\ell b
	\xrightarrow{v}\Sigma^{\ell+1}a.
	\]
	
	Apply the octahedral axiom to the composable morphisms $u$ and $v$.
	Using \eqref{eq:top-first-triangle}, the two triangles just constructed
	occur as the fibre triangles of $u$, $v$, and $vu=\gamma$. The
	resulting octahedral triangle is
	\[
	m\longrightarrow x\longrightarrow\Sigma^\ell c
	\longrightarrow\Sigma m.
	\]
	By \eqref{eq:m-in-lower-window} and $c\in\mA$, this shows that
	\[
	x\in\mD_{\ell-1}*\Sigma^\ell\mA.
	\]
	The reverse inclusion is \eqref{eq:automatic-exchange-inclusion}, so
	the two extension products are equal.
	
	Conversely, assume \textup{(1)}. Let
	\[
	\gamma\colon d\longrightarrow\Sigma^{\ell+1}a,
	\qquad
	d\in\mD_{\ell-1},\quad a\in\mA,
	\]
	be any morphism, and complete it to a distinguished triangle
	\begin{equation}
		\Sigma^\ell a\xrightarrow{\alpha}x\xrightarrow{\beta}d
		\xrightarrow{\gamma}\Sigma^{\ell+1}a.
		\label{eq:triangle-from-obstruction}
	\end{equation}
	Then $x\in\Sigma^\ell\mA*\mD_{\ell-1}$. By \textup{(1)}, choose a
	distinguished triangle
	\begin{equation}
		d'\longrightarrow x\xrightarrow{q}\Sigma^\ell b
		\longrightarrow\Sigma d'
		\label{eq:reverse-decomposition}
	\end{equation}
	with $d'\in\mD_{\ell-1}$ and $b\in\mA$.
	
	There is a unique morphism $f\colon a\to b$ such that
	\[
	\Sigma^\ell f=q\alpha.
	\]
	Apply the highest-layer part of the long exact sequence to
	\eqref{eq:triangle-from-obstruction}. Since
	\[
	H_\ell(d)=0,
	\qquad
	H_\ell(\Sigma^\ell a)\cong a,
	\]
	the morphism
	\[
	H_\ell(\alpha)\colon a\xrightarrow{\sim}H_\ell(x)
	\]
	is an isomorphism. Applying the same exact sequence to
	\eqref{eq:reverse-decomposition}, and using $H_\ell(d')=0$, shows
	that
	\[
	H_\ell(q)\colon H_\ell(x)\longrightarrow b
	\]
	is a monomorphism. Under the pure-layer identifications,
	\[
	f=H_\ell(q)H_\ell(\alpha),
	\]
	so $f$ is a monomorphism in $\mA$.
	
	Let $c=\operatorname{Coker}f$. Properness of $\mA$ gives a
	distinguished triangle
	\[
	a\xrightarrow{f}b\longrightarrow c
	\xrightarrow{\eta}\Sigma a
	\]
	with $c\in\mA$. After shifting by $\ell$, we obtain
	\begin{equation}
		\Sigma^\ell a\xrightarrow{\Sigma^\ell f}\Sigma^\ell b
		\longrightarrow\Sigma^\ell c
		\xrightarrow{\Sigma^\ell\eta}\Sigma^{\ell+1}a.
		\label{eq:shifted-cokernel-triangle}
	\end{equation}
	Since $q\alpha=\Sigma^\ell f$, axiom \textup{(TR3)} completes the
	commutative square determined by $1_{\Sigma^\ell a}$ and $q$ to a
	morphism from \eqref{eq:triangle-from-obstruction} to
	\eqref{eq:shifted-cokernel-triangle}. In particular, there is a
	morphism
	\[
	u\colon d\longrightarrow\Sigma^\ell c
	\]
	such that
	\[
	\gamma=(\Sigma^\ell\eta)u.
	\]
	Thus $\gamma$ factors through an object of $\Sigma^\ell\mA$, proving
	\textup{(2)}.
\end{proof}

\begin{rem}
	When $\ell=1$, $(2)$ in Proposition \ref{prop:top-exchange-factorization} is \cite[Proposition 3.5(1)]{Kortegaard2025}.
\end{rem}

The factorization hypothesis is automatic under a stronger, but very easy to check, positive vanishing condition.

\begin{cor}
	[Positive vanishing criterion]
	\label{cor:positive-vanishing-exchange}
	Assume $(E_{\ell+1})$ and
	\[
	\mathrm{Hom}_{\mC}(\mA,\Sigma^q\mA)=0
	\qquad (2\leq q\leq\ell+1).
	\]
	Then
	\[
	\Sigma^\ell\mA*\mD_{\ell-1}
	=
	\mD_{\ell-1}*\Sigma^\ell\mA.
	\]
\end{cor}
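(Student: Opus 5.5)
The plan is to verify condition (2) of Proposition~\ref{prop:top-exchange-factorization}; the equality then follows from that proposition. So fix $a\in\mA$, $d\in\mD_{\ell-1}$ and a morphism $\gamma\colon d\to\Sigma^{\ell+1}a$. I will show that in fact $\gamma=0$, which trivially factors through the zero object $0\in\Sigma^\ell\mA$ (recall all subcategories are additive, so they contain $0$).

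To see $\gamma=0$, I will apply Lemma~\ref{lem 3.2} with $\mX_j=\Sigma^j\mA$ for $0\leq j\leq\ell-1$, so that $\mX_{\ell-1}*\cdots*\mX_0=\mD_{\ell-1}$, and with the single target $\mathcal Y_1=\Sigma^{\ell+1}\mA$. This reduces the vanishing of $\mathrm{Hom}_{\mC}(\mD_{\ell-1},\Sigma^{\ell+1}\mA)$ to the vanishing on pure layers:
\[
\mathrm{Hom}_{\mC}(\Sigma^j\mA,\Sigma^{\ell+1}\mA)\cong\mathrm{Hom}_{\mC}(\mA,\Sigma^{\ell+1-j}\mA),\qquad 0\leq j\leq\ell-1.
\]
Here the exponent $q=\ell+1-j$ ranges over $2\leq q\leq\ell+1$, which is exactly the range of the assumed positive vanishing. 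Hence every pure-layer Hom group is zero, so Lemma~\ref{lem 3.2} gives $\gamma=0$.

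Finally, feed this into Proposition~\ref{prop:top-exchange-factorization}, (2)$\Rightarrow$(1), using the standing $(E_{\ell+1})$. Alternatively, one can argue directly: since $\gamma=0$, every triangle $\Sigma^\ell a\to x\to d\xrightarrow{\gamma}\Sigma^{\ell+1}a$ splits, so $x\cong\Sigma^\ell a\oplus d\in\mD_{\ell-1}*\Sigma^\ell\mA$; the reverse inclusion is \eqref{eq:automatic-exchange-inclusion}.

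There is no real obstacle. The only point needing care is the index bookkeeping: the window $\mD_{\ell-1}$ uses only the shifts $j\le\ell-1$, so the smallest exponent that occurs is $q=2$. This is why the case $q=1$, i.e.\ $\mathrm{Ext}^1$ in $\mA$, is not needed.
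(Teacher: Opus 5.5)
Your proposal is correct and follows essentially the same route as the paper: you use the pure-layer computation $\mathrm{Hom}_{\mC}(\Sigma^j\mA,\Sigma^{\ell+1}\mA)\cong\mathrm{Hom}_{\mC}(\mA,\Sigma^{\ell+1-j}\mA)=0$ for $0\leq j\leq\ell-1$, then extension closure of the left orthogonal (Lemma~\ref{lem 3.2}) to get $\mathrm{Hom}_{\mC}(\mD_{\ell-1},\Sigma^{\ell+1}\mA)=0$. From this, every $\gamma$ in condition~(2) of Proposition~\ref{prop:top-exchange-factorization} is zero, and that proposition gives the equality. Your alternative splitting argument, combined with \eqref{eq:automatic-exchange-inclusion}, is also valid.
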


\begin{proof}
	For $0\leq i\leq\ell-1$, one has
	\[
	\mathrm{Hom}_{\mC}
	\bigl(\Sigma^i\mA,\Sigma^{\ell+1}\mA\bigr)
	\cong
	\mathrm{Hom}_{\mC}
	\bigl(\mA,\Sigma^{\ell+1-i}\mA\bigr)
	=0,
	\]
	because $2\leq\ell+1-i\leq\ell+1$. Since the left orthogonal of
	$\Sigma^{\ell+1}\mA$ is closed under extensions, this gives
	\[
	\mathrm{Hom}_{\mC}
	\bigl(\mD_{\ell-1},\Sigma^{\ell+1}\mA\bigr)=0.
	\]
	Thus every morphism occurring in condition~\textup{(2)} of
	Proposition~\ref{prop:top-exchange-factorization} is zero, and the
	result follows.
\end{proof}

\section{Classification of top-layer intermediate subcategories}
Throughout this section, we assume that $\mA$ is an extension-closed proper abelian subcategory in a Krull-Schmidt triangulated category $\mC$ satisfying \eqref{eq:global-vanishing}, where $\ell\geq 1$ is a positive integer. We write $\mA_i:=\Sigma^i\mA$ and, for $0\leq m\leq \ell$, set $\mathcal{D}_m:=\mA_m*\mA_{m-1}*\cdots* \mA_0, \mathcal{D}_0=\mA$.

Recall that a torsion-free class in an abelian category is an additive subcategory closed under subobjects and extensions. It is automatically closed under direct summands.                  

\begin{lem}
	Let $\mF\subseteq\mA$ be a torsion-free class. For  $d\in \mD_{\ell-1}$, $a\in\mA$, $x\in \mD_\ell$, one has
		$$H_\ell(d)=0, \quad H_{\ell}(\Sigma^\ell a)\cong a,$$
		$$H_\ell(x)=0\iff x\in \mD_{\ell-1},$$
		and
		$$x\in \Sigma^\ell \mF*\mD_{\ell-1}\iff H_\ell(x)\in \mF.$$
\end{lem}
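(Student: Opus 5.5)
The first three assertions are special cases of Section~\ref{section 3}. By Proposition~\ref{prop 3.7}(2), an object $d\in\mD_{\ell-1}$ has $H_\ell^{[\ell]}(d)=0$. The isomorphism $H_\ell(\Sigma^\ell a)\cong a$ is Proposition~\ref{prop 3.7}(1) with $i=j=\ell$. The equivalence $H_\ell(x)=0\iff x\in\mD_{\ell-1}$ is the case $m=\ell-1$, $n=\ell$ of Proposition~\ref{prop 3.7}(4). So the real content is the last equivalence, and I treat its two directions separately.

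For ($\Leftarrow$), assume $H_\ell(x)\in\mF$. Use the top-layer decomposition triangle \eqref{eq:top-truncation} with $r=\ell$:
\[
\Sigma^\ell H_\ell(x)\xrightarrow{\iota_x^\ell}x\xrightarrow{\pi_x^\ell}x^{<\ell}\longrightarrow\Sigma^{\ell+1}H_\ell(x),
\]
where $x^{<\ell}\in\mD_{\ell-1}$. This triangle exhibits $x\in\Sigma^\ell\mF*\mD_{\ell-1}$ at once.

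For ($\Rightarrow$), suppose we have a triangle $\Sigma^\ell f\to x\to d\to\Sigma^{\ell+1}f$ with $f\in\mF$ and $d\in\mD_{\ell-1}$. All three terms lie in $\mD_\ell$, since $\Sigma^\ell\mA\subseteq\mD_\ell$ and $\mD_{\ell-1}\subseteq\mD_\ell$. Theorem~\ref{thm:main-long-exact} therefore gives an exact beginning
\[
0\to H_\ell(\Sigma^\ell f)\to H_\ell(x)\to H_\ell(d)=0.
\]
Hence $H_\ell(x)\cong H_\ell(\Sigma^\ell f)\cong f$, which lies in $\mF$. Here I use that $\mF$ is replete; subcategories are assumed replete throughout.

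The only point needing care is that the identification $H_\ell(\Sigma^\ell f)\cong f$ is compatible with how $\mF$ sits inside $\mA$. This holds because $\mF$ is replete, so membership is isomorphism-invariant. For the ($\Leftarrow$) direction I could instead invoke the uniqueness of $s$-torsion decompositions from \cite[Proposition~3.7]{Adachi2023}, but the explicit triangle suffices. The ($\Rightarrow$) direction does not even use closure of $\mF$ under subobjects or extensions; that closure will matter later, for showing that $\Sigma^\ell\mF*\mD_{\ell-1}$ is extension closed.
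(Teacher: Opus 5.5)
Your proof is correct. The first three assertions and the direction $H_\ell(x)\in\mF\Rightarrow x\in\Sigma^\ell\mF*\mD_{\ell-1}$ match the paper: you use Proposition~\ref{prop 3.7} and the canonical top-layer triangle.

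For the direction ($\Rightarrow$) your route differs. You go through the long exact sequence of Theorem~\ref{thm:main-long-exact}. The paper, by contrast, uses only uniqueness of $s$-torsion decompositions. A triangle $\Sigma^\ell f\to x\to d\to\Sigma^{\ell+1}f$ with $f\in\mF$ and $d\in\mD_{\ell-1}$ is itself a decomposition of $x$ for the pair $(\Sigma^\ell\mA,\mD_{\ell-1})$ in $\mD_\ell$. So it is isomorphic to the canonical top-layer triangle, which gives $\Sigma^\ell f\cong\Sigma^\ell H_\ell(x)$ and hence $H_\ell(x)\cong f\in\mF$.

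What each route buys: the uniqueness argument is more elementary and independent of Section~\ref{section 4}, and it also gives $d\cong x^{<\ell}$ for free. Your argument is legitimate, since the theorem precedes the lemma. It actually needs only the left-exact piece $0\to H_\ell(X)\to H_\ell(Y)\to H_\ell(Z)$ from the first paragraph of the proof of Proposition~\ref{prop:top-four-term}. That piece is itself a Yoneda argument close in spirit to uniqueness, so citing it directly would make your proof lighter.

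One small point: your closing remark has the directions swapped. Uniqueness is what would replace your argument for ($\Rightarrow$), whereas ($\Leftarrow$) needs only the existence of the canonical triangle.
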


\begin{proof}
	This follows from the canonical top-layer
	triangle and its uniqueness.
\end{proof}

\begin{prop}\label{prop 6.1}
	If $\mF\subseteq \mA$ is a torsion-free class, then $$\mX_{\mF}:=\Sigma^\ell\mF*\mD_{\ell-1}$$
	is an $\ell$-layer intermediate subcategory. Moreover,
	$$H_\ell(\mX_{\mF})=\mF.$$
\end{prop}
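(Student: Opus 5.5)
We must show four things about $\mX_{\mF}=\Sigma^\ell\mF*\mD_{\ell-1}$: it lies between $\mD_{\ell-1}$ and $\mD_\ell$, it is closed under extensions, it is closed under direct summands, and $H_\ell(\mX_\mF)=\mF$. The organizing tool is the preceding lemma, which characterizes the subcategory as
\[
\mX_\mF=\{x\in\mD_\ell\mid H_\ell(x)\in\mF\}.
\]
All closure properties then reduce to properties of $\mF$ in $\mA$, transported through the exact sequence of Theorem~\ref{thm:main-long-exact}.

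\textbf{Inclusions and $H_\ell(\mX_\mF)=\mF$.} Since $0\in\mF$, we get $\mD_{\ell-1}=\Sigma^\ell 0*\mD_{\ell-1}\subseteq\mX_\mF$. The inclusion $\mX_\mF\subseteq\Sigma^\ell\mA*\mD_{\ell-1}=\mD_\ell$ is by definition. For $a\in\mF$ we have $\Sigma^\ell a\in\mX_\mF$ and $H_\ell(\Sigma^\ell a)\cong a$. Because $\mF$ is replete, this gives $\mF\subseteq H_\ell(\mX_\mF)$. The reverse inclusion is exactly the characterization above.

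\textbf{Direct summands.} If $x\oplus x'\in\mX_\mF$, then both summands lie in $\mD_\ell$, since $\mD_\ell$ is the torsion part of the trivial pair and closed under summands. To see this, note $H_\ell$ is additive and extension-closed subcategories of the form used here are summand-closed. Alternatively, use the Krull–Schmidt hypothesis together with the fact that $\mD_\ell$ is closed under summands because it is the torsion class $\mD_{[\ell,0]}$ of an $s$-torsion pair in a larger window. I expect the cleanest route is to apply the split triangle $x\to x\oplus x'\to x'\xrightarrow{0}$ inside $\mD_\ell$. Then $H_\ell(x)$ is a direct summand, hence a subobject, of $H_\ell(x\oplus x')\in\mF$, so it lies in $\mF$.

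This summand-closure of $\mD_\ell$ is the one point I would double-check. If it is not already available, I would instead observe that $H_\ell$ commutes with finite sums, and argue via idempotents in the Krull–Schmidt category $\mC$ that a summand of an object of $\mD_\ell$ decomposes accordingly. Failing that, I would simply adopt closure under summands of $\mD_\ell$ as implicit in the paper's setup.

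\textbf{Extensions.} Let $x\to y\to z\to\Sigma x$ be a triangle with $x,z\in\mX_\mF$. Then $y\in\mD_\ell$ by Proposition~\ref{prop 3.3}. Theorem~\ref{thm:main-long-exact} gives the exact sequence
\[
0\to H_\ell(x)\to H_\ell(y)\to H_\ell(z).
\]
Let $I$ be the image of $H_\ell(y)\to H_\ell(z)$. Then $I$ is a subobject of $H_\ell(z)\in\mF$, so $I\in\mF$. This yields a short exact sequence
\[
0\to H_\ell(x)\to H_\ell(y)\to I\to 0
\]
in $\mA$ whose outer terms lie in $\mF$. Since $\mF$ is closed under extensions, $H_\ell(y)\in\mF$, and therefore $y\in\mX_\mF$.

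This step is where both defining properties of a torsion-free class are used: subobjects and extensions. It is also the only place the long exact sequence is needed. The remaining steps are formal.
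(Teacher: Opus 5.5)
Your inclusions, your proof of $H_\ell(\mX_\mF)=\mF$, and your extension-closure argument match the paper's. That argument takes $y\in\mD_\ell$ from Proposition~\ref{prop 3.3}, uses the image of $H_\ell(y)\to H_\ell(z)$ as a subobject of $H_\ell(z)$, and applies the characterization $x\in\Sigma^\ell\mF*\mD_{\ell-1}\iff H_\ell(x)\in\mF$. The gap is closure under direct summands, which is part of the definition of an $\ell$-layer intermediate subcategory.

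Your reduction via additivity of $H_\ell$ works only once you know that a summand of an object of $\mX_\mF$ lies in $\mD_\ell$. That is exactly the nontrivial point, and none of your justifications proves it:
\begin{enumerate}
\item Extension closure alone does not give summand closure. In $\mathrm D^{\mathrm b}(\modu k)$, the objects $V\oplus\Sigma W$ with $\dim V=\dim W$ form an additive, extension-closed subcategory (Euler characteristic is additive on triangles). It contains $k\oplus\Sigma k$ but not $k$.
\item Being the torsion class of an $s$-torsion pair gives summand closure at most inside the ambient window. Realizing $\mD_\ell$ as a torsion class in $\mD_{[\ell,-1]}$ would need $\mathrm{Hom}_{\mC}(\Sigma^{\ell+2}\mA,\mA)=0$, i.e.\ $(E_{\ell+2})$, which is not assumed. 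Even then, the larger window would raise the same question.
\item You cannot treat it as implicit: the standing convention only makes subcategories full, replete and additive.
\end{enumerate}

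The missing ingredient is the Krull--Schmidt result of Iyama--Yoshino \cite[Proposition~2.1(i)]{Iyama2008}. It says that if $\mathcal X,\mathcal Y\subseteq\mC$ are closed under direct summands and $\mathrm{Hom}_{\mC}(\mathcal X,\mathcal Y)=0$, then $\mathcal X*\mathcal Y$ is closed under direct summands. Apply it as follows:
\begin{enumerate}
\item The abelian category $\mA$ is summand-closed in $\mC$, since idempotents split in $\mA$.
\item Condition $(E_{\ell+1})$ and Lemma~\ref{lem 3.2} give $\mathrm{Hom}_{\mC}(\Sigma^i\mA,\mD_{i-1})=0$, so induction shows each $\mD_i=\Sigma^i\mA*\mD_{i-1}$ is summand-closed.
\item Apply the result once more to $\Sigma^\ell\mF$ and $\mD_{\ell-1}$. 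Here $\Sigma^\ell\mF$ is summand-closed because $\mF$ is closed under subobjects, and $\mathrm{Hom}_{\mC}(\Sigma^\ell\mF,\mD_{\ell-1})\subseteq\mathrm{Hom}_{\mC}(\Sigma^\ell\mA,\mD_{\ell-1})=0$.
\end{enumerate}
This gives summand closure of $\mX_\mF$ directly, which is the paper's route, and makes your $H_\ell$ detour unnecessary.

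Your idempotent idea can also be completed, but only by essentially reproving that lemma. An idempotent $\epsilon$ on $z\in\mD_\ell$ induces idempotents $U_\ell(\epsilon)$ and $L_{\ell-1}(\epsilon)$. Together with $\epsilon$ these form an idempotent endomorphism of the decomposition triangle; the connecting square commutes because $\mathrm{Hom}_{\mC}(\Sigma^{\ell+1}\mA,\mD_{\ell-1})=0$. You then need three further facts: idempotents split in $\mC$, a summand of a distinguished triangle is distinguished, and $\mD_{\ell-1}$ is summand-closed by induction. Together these put the image of $\epsilon$ in $\Sigma^\ell\mA*\mD_{\ell-1}$.
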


\begin{proof}
	The inclusions $\mD_{\ell-1}\subseteq \mX_{\mF}\subseteq \mD_\ell$ are immediate.
	
	To prove extension closure, let 
	\[\begin{tikzcd}
		x & y & z & {}
		\arrow["f", from=1-1, to=1-2]
		\arrow["g", from=1-2, to=1-3]
		\arrow["{+1}", from=1-3, to=1-4]
	\end{tikzcd}\]
	be a distinguished triangle with $x, z\in \mX_{\mF}\subseteq \mD_\ell$. Proposition \ref{prop 3.3} implies $y\in \mD_\ell$. Applying Theorem \ref{thm:main-long-exact} to the above triangle, we have the following exact sequence in $\mA$.
	\[\begin{tikzcd}
		0 & {H_{\ell}(x)} & {H_{\ell}(y)} & {\mathrm{Im}H_{\ell}(g)} & 0
		\arrow[from=1-1, to=1-2]
		\arrow["{H_{\ell}(f)}", from=1-2, to=1-3]
		\arrow[from=1-3, to=1-4]
		\arrow[from=1-4, to=1-5]
	\end{tikzcd}\]
	One has $H_{\ell}(x), H_{\ell}(z)\in \mF$. $\mathrm{Im}H_{\ell}(g)$ is a subobject of $H_{\ell}(z)$ and $\mF$ is closed under subobjects, then $\mathrm{Im}H_{\ell}(g)\in \mF$. Since $\mF$ is extension-closed, then $H_{\ell}(y)\in \mF$, hence $y\in \mX_{\mF}$.
	
To prove closure under direct summands, note first that
\[
\operatorname{Hom}_{\mC}
\bigl(\Sigma^\ell \mF,\mD_{\ell-1}\bigr)
\subseteq
\operatorname{Hom}_{\mC}
\bigl(\Sigma^\ell \mA,\mD_{\ell-1}\bigr)
=0.
\]
Moreover, by induction, \(\mD_i\) is closed under direct summands for every
\(0\leq i\leq \ell\), by \cite[Proposition 2.1(i)]{Iyama2008}.
	
	By canonical distinguished triangle, we have $H_{\ell}(\mX_{\mF})\subseteq\mF$. Conversely, if $f\in\mF$, then $\Sigma^\ell f\in \mX_{\mF}$, using the split triangle with third term zero, and $H_{\ell}(\Sigma^\ell f)\cong f$.
\end{proof}

\begin{prop}\label{prop 6.2}
	Let $\mX$ be an $\ell$-layer intermediate subcategory satisfying $$\mX\subseteq\mD_{\ell-1}*\Sigma^\ell\mA.$$
	Define $$H_{\ell}(\mX):=\left\lbrace a\in \mA: a\cong H_{\ell}(x) \text{ for some } x\in \mX \right\rbrace. $$
	Then $H_{\ell}(\mX)$ is a torsion-free class in $\mA$, and $$\mX=\Sigma^\ell H_{\ell}(\mX)*\mD_{\ell-1}.$$
\end{prop}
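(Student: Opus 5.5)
We split the statement into two parts. First we show that $\mathcal F:=H_\ell(\mX)$ is a torsion-free class. Then we prove the equality $\mX=\Sigma^\ell\mathcal F*\mD_{\ell-1}$ by two inclusions. Throughout, the key input is Lemma~\ref{lem 5.4}: the hypothesis $\mX\subseteq\mD_{\ell-1}*\Sigma^\ell\mA$ gives $\Sigma^\ell H_\ell(x)\in\mX$ for every $x\in\mX$. Together with $\mD_{\ell-1}\subseteq\mX$, this lets us pass freely between $\mX$ and pure top layers.

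\emph{Equality.} For $\mX\subseteq\Sigma^\ell\mathcal F*\mD_{\ell-1}$, take $x\in\mX$. The canonical top-layer triangle $\Sigma^\ell H_\ell(x)\to x\to x^{<\ell}\to$ has first term in $\Sigma^\ell\mathcal F$ and last term in $\mD_{\ell-1}$.

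For the reverse inclusion, let $\Sigma^\ell f\to y\to d\to$ be a triangle with $f\cong H_\ell(x)$ for some $x\in\mX$ and $d\in\mD_{\ell-1}$. By Lemma~\ref{lem 5.4} and repleteness, $\Sigma^\ell f\in\mX$. Also $d\in\mD_{\ell-1}\subseteq\mX$. Extension closure then gives $y\in\mX$.

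It remains to see that $\mathcal F$ is replete and additive. Additivity holds because $H_\ell$ is additive and $\mX$ is closed under finite sums.

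\emph{Closure under extensions.} Let $0\to f_1\to e\to f_2\to 0$ be exact in $\mA$ with $f_1,f_2\in\mathcal F$. Properness turns it into a triangle, and shifting gives $\Sigma^\ell f_1\to\Sigma^\ell e\to\Sigma^\ell f_2\to$ (with the usual sign). The outer terms lie in $\mX$ by Lemma~\ref{lem 5.4}, so $\Sigma^\ell e\in\mX$. Since $H_\ell(\Sigma^\ell e)\cong e$ by Proposition~\ref{prop 3.7}, we get $e\in\mathcal F$.

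\emph{Closure under subobjects.} This is the main step. Let $f\in\mathcal F$ and let $a\hookrightarrow f$ be a monomorphism with cokernel $c$. Properness gives a triangle $a\to f\to c\to\Sigma a$. Shifting by $\ell$ and rotating yields
\[
\Sigma^{\ell-1}c\longrightarrow\Sigma^\ell a\longrightarrow\Sigma^\ell f\longrightarrow\Sigma^\ell c.
\]
This exhibits $\Sigma^\ell a$ as an extension of $\Sigma^\ell f\in\mX$ by $\Sigma^{\ell-1}c\in\mD_{\ell-1}\subseteq\mX$. Hence $\Sigma^\ell a\in\mX$, and so $a\cong H_\ell(\Sigma^\ell a)\in\mathcal F$.

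The only point needing care is the rotation and sign conventions when shifting a triangle of $\mA$ by $\Sigma^{\ell}$ and by $\Sigma^{\ell-1}$. Signs do not affect membership, since isomorphic triangles have isomorphic terms.

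Closure under direct summands then follows automatically for a torsion-free class, as recalled in the paper. Alternatively, it follows directly: if $a\oplus a'\cong H_\ell(x)$, then $a$ is a subobject of $H_\ell(x)$. This completes the proof that $\mathcal F$ is a torsion-free class and that $\mX=\Sigma^\ell\mathcal F*\mD_{\ell-1}$.
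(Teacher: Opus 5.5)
Your proposal is correct and follows essentially the same route as the paper. Both use Lemma~\ref{lem 5.4} to put $\Sigma^\ell H_\ell(x)$ into $\mX$, properness plus a shift (with one backward rotation for subobjects) to get extension and subobject closure of $H_\ell(\mX)$, and the canonical top-layer triangle together with extension closure of $\mX$ for the two inclusions. The only difference is that you present the equality before the torsion-free axioms.
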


\begin{proof}
	Set $\mF:=H_{\ell}(\mX)$. It contains zero since $0\in \mD_{\ell-1}\subseteq \mX$ and $H_{\ell}(0)=0$. If $f_i\cong H_{\ell}(x_i)$ with $x_i\in \mX$, then $x_1\oplus x_2\in \mX$ and additivity gives $$f_1\oplus f_2\cong H_{\ell}(x_1\oplus x_2)\in \mF.$$
	Thus $\mF$ is additive.
	
	To prove extension closure, let 
	\[\begin{tikzcd}
		0 & {f'} & e & {f''} & 0
		\arrow[from=1-1, to=1-2]
		\arrow[from=1-2, to=1-3]
		\arrow[from=1-3, to=1-4]
		\arrow[from=1-4, to=1-5]
	\end{tikzcd}\]
	be a short exact sequence in $\mA$ with $f', f''\in \mF$. Properness of $\mA$ turns the above into a distinguished triangle 
	\[\begin{tikzcd}
		{\Sigma^\ell f'} & {\Sigma^\ell e} & {\Sigma^\ell f''} & {}.
		\arrow[from=1-1, to=1-2]
		\arrow[from=1-2, to=1-3]
		\arrow["{+1}", from=1-3, to=1-4]
	\end{tikzcd}\]
	Then $\Sigma^\ell f', \Sigma^\ell f''\in \mX$ implies $\Sigma^\ell e\in\mX$ by Lemma \ref{lem 5.4}. Hence $e\cong H_{\ell}(\Sigma^\ell e)\in \mF$.
	
	To prove subobject closure, let u be a subobject of $f\in \mF$. In $\mA$ there is a short exact sequence 
	\[\begin{tikzcd}
		0 & u & f & q & 0.
		\arrow[from=1-1, to=1-2]
		\arrow[from=1-2, to=1-3]
		\arrow[from=1-3, to=1-4]
		\arrow[from=1-4, to=1-5]
	\end{tikzcd}\]
	The corresponding distinguished triangle, shifted by $\Sigma^\ell$ and rotated once to the right, is 
	\[\begin{tikzcd}
		{\Sigma^{\ell-1}q} & {\Sigma^\ell u} & {\Sigma^\ell f} & {}.
		\arrow[from=1-1, to=1-2]
		\arrow[from=1-2, to=1-3]
		\arrow["{+1}", from=1-3, to=1-4]
	\end{tikzcd}\]
	Here $\Sigma^{\ell-1}q\in \mD_{\ell-1}\subseteq\mX, \Sigma^\ell f\in \Sigma^\ell\mF=\Sigma^\ell H_{\ell}(\mX)\subseteq \mX$ by Lemma \ref{lem 5.4}. Then $\Sigma^\ell u\in \mX$, hence $u\cong H_{\ell}(\Sigma^\ell u)\in \mF$.
	
	$\mX\subseteq \Sigma^\ell H_{\ell}(\mX)*\mD_{\ell-1}$ is trivial by canonical distinguished triangle. Conversely, for any $y\in \Sigma^\ell H_{\ell}(\mX)*\mD_{\ell-1}$, suppose a triangle
	\[\begin{tikzcd}
		{\Sigma^{\ell}f} & y & d & {}
		\arrow[from=1-1, to=1-2]
		\arrow[from=1-2, to=1-3]
		\arrow["{+1}", from=1-3, to=1-4]
	\end{tikzcd}\]
	with $f\in H_{\ell}(\mX), d\in \mD_{\ell-1}$. The first term belongs to $\mX$ by Lemma \ref{lem 5.4}, and the third term belongs to $\mX$ because $\mD_{\ell-1}\subseteq\mX$. Extension closure gives $y\in\mX$.
\end{proof}

\begin{thm}\label{thm 6.3}
	Assume the highest-layer exchange equality
	$$\Sigma^\ell\mA*\mD_{\ell-1}=\mD_{\ell-1}*\Sigma^\ell\mA.$$
	Then there is a bijection
	\[
	\begin{array}{r@{\;}c@{\;}l}
		\left\{
		\begin{array}{c}
			\ell\text{-layer intermediate}\\
			\text{subcategories } \mathcal X
		\end{array}
		\right\}
		&
		\overset{1:1}{\longleftrightarrow}
		&
		\left\{
		\begin{array}{c}
			\text{torsion-free classes}\\
			\mathcal F\subseteq \mathcal A
		\end{array}
		\right\}
		\\[1.6em]
		\mathcal X
		&
		\mapsto
		&
		H_{\ell}(\mathcal X)
		\\[0.6em]
		\Sigma^\ell\mathcal F * \mathcal D_{\ell-1}
		&
		\mapsfrom
		&
		\mathcal F
	\end{array}
	\]
	Moreover, this bijection is an isomorphism of posets with respect to inclusion.
\end{thm}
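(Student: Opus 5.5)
The plan is to assemble the bijection from Propositions \ref{prop 6.1} and \ref{prop 6.2}. The one genuinely new input is that, under the exchange equality, every $\ell$-layer intermediate subcategory automatically satisfies the hypothesis $\mX\subseteq\mD_{\ell-1}*\Sigma^\ell\mA$ of Proposition \ref{prop 6.2}. Indeed, associativity of $*$ gives $\mD_\ell=\Sigma^\ell\mA*\mD_{\ell-1}$. The assumed equality rewrites this as $\mD_\ell=\mD_{\ell-1}*\Sigma^\ell\mA$, so any $\mX\subseteq\mD_\ell$ lies in $\mD_{\ell-1}*\Sigma^\ell\mA$.

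With this in hand, both maps are well defined.
\begin{itemize}
\item For an $\ell$-layer intermediate subcategory $\mX$, Proposition \ref{prop 6.2} shows that $H_\ell(\mX)$ is a torsion-free class. It also gives $\mX=\Sigma^\ell H_\ell(\mX)*\mD_{\ell-1}$, which is exactly the statement that the composite $\mX\mapsto H_\ell(\mX)\mapsto \Sigma^\ell H_\ell(\mX)*\mD_{\ell-1}$ is the identity.
\item Conversely, for a torsion-free class $\mF$, Proposition \ref{prop 6.1} shows that $\mX_\mF=\Sigma^\ell\mF*\mD_{\ell-1}$ is an $\ell$-layer intermediate subcategory with $H_\ell(\mX_\mF)=\mF$. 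This is the other composite, so the two maps are mutually inverse.
\end{itemize}
Note that $H_\ell(\mX)$ is defined up to isomorphism, i.e.\ as a replete subcategory, so the equality $H_\ell(\mX_\mF)=\mF$ uses that $\mF$ is replete. This holds by the standing convention that all subcategories are replete.

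For the poset statement, I will check that both maps preserve inclusion.
\begin{itemize}
\item If $\mX\subseteq\mX'$, then $H_\ell(\mX)\subseteq H_\ell(\mX')$ directly from the definition.
\item If $\mF\subseteq\mF'$, then $\Sigma^\ell\mF*\mD_{\ell-1}\subseteq\Sigma^\ell\mF'*\mD_{\ell-1}$, since $*$ is monotone in each argument.
\end{itemize}
An order-preserving bijection whose inverse is also order-preserving is a poset isomorphism.

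I expect no serious obstacle; the main point to state carefully is the first step, namely that the exchange equality places every intermediate $\mX$ inside $\mD_{\ell-1}*\Sigma^\ell\mA$. That containment is what lets Lemma \ref{lem 5.4} (pure top layers stay in $\mX$) apply through Proposition \ref{prop 6.2}. Without it the map $\mX\mapsto H_\ell(\mX)$ need not land in torsion-free classes, nor recover $\mX$.
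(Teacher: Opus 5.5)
Your proposal is correct and follows the paper's proof. The bijection comes from Propositions \ref{prop 6.1} and \ref{prop 6.2}, and the poset statement comes from monotonicity of both assignments. You also make explicit a step the paper leaves implicit: $\mD_\ell=\Sigma^\ell\mA*\mD_{\ell-1}=\mD_{\ell-1}*\Sigma^\ell\mA$, so every $\ell$-layer intermediate subcategory satisfies the hypothesis of Proposition \ref{prop 6.2}.
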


\begin{proof}
	The bijection can be obtained directly from Proposition \ref{prop 6.1} and Proposition \ref{prop 6.2}.  
	
	Both assignments preserve inclusion, since
	$\mathcal X\subseteq\mathcal Y$ implies
	$H_\ell(\mathcal X)\subseteq H_\ell(\mathcal Y)$, and
	$\mathcal F\subseteq\mathcal G$ implies
	$\Sigma^\ell\mathcal F*\mD_{\ell-1}
	\subseteq\Sigma^\ell\mathcal G*\mD_{\ell-1}$.
	Hence the bijection is an isomorphism of posets.
\end{proof}

\section{Compatible layers and intermediate subcategories}

\subsection{Why one torsion-free class is insufficient}

\medskip
\noindent\textbf{Why one torsion-free class is insufficient}

One may instead ask for a classification of all extension-closed, direct summand-closed subcategories satisfying $$\mA\subseteq\mX\subseteq\mD_\ell$$
by a single torsion-free class. This stronger statement is false even in a semisimple derived category.
\begin{Ctex}\label{Ctex 7.1}
	Let $k$ be a field, $\mA=\mathrm{Vect}_k$ the category of finite-dimensional vector spaces, and $\mathcal{T}=D^b(\mathrm{Vect}_k)$. Then
	$$\mD_2=\mathrm{add}(\mA\cup\Sigma\mA\cup\Sigma^2\mA).$$
	The following four distinct subcategories contain $\mA$:
	$$\mA,\quad \mathrm{add}(\mA*\Sigma\mA),\quad \mathrm{add}(\mA*\Sigma^2\mA),\quad \mD_2.$$
	They are closed under direct summands because a direct summand cannot acquire cohomology
	in a new degree. They are extension closed because, if the first and third terms of a distinguished
	triangle have zero cohomology outside a fixed subset of degrees, the long exact cohomology
	sequence forces the middle term to have the same vanishing.
	On the other hand, $\mathrm{Vect}_k$ has only two trivial torsion-free classes $0$ and $\mA$. Indeed, if a torsion-free
	class contains a nonzero vector space, subobject closure gives a one-dimensional vector space, and
	extension closure then gives every finite-dimensional vector space. Hence these four subcategories cannot be classified by the two choices of a single torsion-free class.
\end{Ctex}

In fact, by Theorem \ref{thm:main-long-exact}, functors $H_i:\mD_\ell\to \mA$ and connecting morphisms make 
\[\begin{tikzcd}[column sep=small]
	0 & {H_{\ell}(c)} & {H_{\ell}(c')} & {H_{\ell}(c'')} & {H_{\ell-1}(c)} & \cdots & {H_{0}(c)} & {H_{0}(c')} & {H_{0}(c'')} & 0
	\arrow[from=1-1, to=1-2]
	\arrow["{H_{\ell}(f)}", from=1-2, to=1-3]
	\arrow["{H_{\ell}(g)}", from=1-3, to=1-4]
	\arrow["{\delta_{\ell}}", from=1-4, to=1-5]
	\arrow[from=1-5, to=1-6]
	\arrow[from=1-6, to=1-7]
	\arrow["{H_0(f)}", from=1-7, to=1-8]
	\arrow["{H_0(g)}", from=1-8, to=1-9]
	\arrow[from=1-9, to=1-10]
\end{tikzcd}\]
exact for every triangle $c\xrightarrow{f} c'\xrightarrow{g} c''\to\Sigma c$ in $\mD_\ell$.
For $1\leq i\leq \ell$, the relevant part is
\[\begin{tikzcd}
	{H_{i+1}(c'')} & {H_i(c)} & {H_i(c')} & {H_i(c'')} & {H_{i-1}(c)}
	\arrow["{\delta_{i+1}}", from=1-1, to=1-2]
	\arrow["{H_i(f)}", from=1-2, to=1-3]
	\arrow["{H_i(g)}", from=1-3, to=1-4]
	\arrow["{\delta_i}", from=1-4, to=1-5]
\end{tikzcd}\]
with $H_{\ell+1}=0$. Therefore 
\[\begin{tikzcd}
	0 & {\mathrm{Coker}\delta_{i+1}} & {H_i(c')} & {\mathrm{Ker}\delta_i} & 0
	\arrow[from=1-1, to=1-2]
	\arrow[from=1-2, to=1-3]
	\arrow[from=1-3, to=1-4]
	\arrow[from=1-4, to=1-5]
\end{tikzcd}\]
is exact in $\mA$.

At the highest layer, $\delta_{\ell+1}=0$, so the left term is $H_{\ell}(c)$. But at an interior layer, $\mathrm{Coker}\delta_{i+1}$ is a quotient of $H_i(c)$. A torsion-free class need not be closed under that quotient. This is exactly where the proof of Proposition \ref{prop 6.1} fails if one tries to impose
independent torsion-free conditions in every degree.

\subsection{Classification by compatible layer subcategories}\label{subxection 7.2}

Throughout this subsection, let $\mC$ be Krull--Schmidt and let $\mA$ be an
extension-closed proper abelian subcategory satisfying \eqref{eq:global-vanishing}.
Write
\[
D_\ell=\Sigma^\ell\mA*\cdots*\Sigma\mA*\mA,
\]
and use the functors $H_i:D_\ell\to\mA$.
Subcategories are full, replete, and additive. The finite windows are
closed under direct summands, as in  Proposition \ref{prop 6.1}.

\begin{definition}\label{compatible}
	An extension-closed, direct-summand-closed subcategory
	$\mA\subseteq\X\subseteq D_\ell$ is \emph{stable under taking pure homology
		layers} if
	\[
	x\in\X\quad\Longrightarrow\quad\Sigma^iH_i(x)\in\X
	\quad(0\leq i\leq\ell).
	\]
	A sequence $(\B_0,\ldots,\B_\ell)$ of additive, direct-summand-closed
	subcategories of $\mA$ is \emph{compatible} if:
	\begin{enumerate}
		\item $\B_0=\mA$ and each $\B_i$ is extension closed in $\mA$;
		\item for $1\leq i\leq\ell$ and every morphism
		$f:a\to b$ with $a\in\B_i$ and $b\in\B_{i-1}$, one has
		\[
		\mathrm{Ker} f\in\B_i,\qquad \mathrm{Coker} f\in\B_{i-1}.
		\tag{KC}
		\]
	\end{enumerate}
\end{definition}

\begin{thm}\label{thm:layers}
	There is an isomorphism of posets between compatible sequences, ordered
	componentwise by inclusion, and subcategories
	$\mA\subseteq\X\subseteq D_\ell$ closed under extensions, direct summands,
	and taking pure homology layers. The maps are
	\begin{align*}
		(\B_0,\ldots,\B_\ell)&\longmapsto
		\X_{\B}:=\{x\in D_\ell\mid H_i(x)\in\B_i
		\text{ for all }i\},\\
		\X&\longmapsto
		\bigl(\{a\in\mA\mid\Sigma^ia\in\X\}\bigr)_{i=0}^{\ell}.
	\end{align*}
	Moreover,
	\[
	\X_{\B}=\Sigma^\ell\B_\ell*\cdots*\Sigma\B_1*\B_0.
	\]
\end{thm}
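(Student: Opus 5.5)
The proof splits into four steps: the product description of $\X_{\B}$, the fact that $\X_{\B}$ is closed under the three operations, the fact that a closed $\X$ yields a compatible sequence, and the two round trips. The main tool throughout is the long exact sequence of Theorem \ref{thm:main-long-exact}, together with Proposition \ref{prop 3.7} for pure objects and the canonical layer triangles.

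\textbf{Step 1: $\X_{\B}=\Sigma^\ell\B_\ell*\cdots*\Sigma\B_1*\B_0$.} Let $x\in\X_{\B}$. The upper layer towers \eqref{eq:upper-layer-tower} filter $x$ with pure subquotients $\Sigma^iH_i(x)$, so $x$ lies in the product; here Proposition \ref{prop:catalan-path-independence} identifies the iterated decomposition with the $H_i$. Conversely, let $x$ lie in the product. I induct on the number of layers, peeling off the top layer $\Sigma^\ell b\to x\to x'$ with $x'\in\Sigma^{\ell-1}\B_{\ell-1}*\cdots*\B_0$. By uniqueness of $s$-torsion decompositions (Proposition \ref{prop 3.4}), this triangle is the top decomposition of $x$. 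Hence $H_\ell(x)\cong b$ and $H_i(x)\cong H_i(x')$ for $i<\ell$ by Proposition \ref{prop 3.7}(3).

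\textbf{Step 2: $\X_{\B}$ has the required closure properties.} Closure under pure layers is immediate from Proposition \ref{prop 3.7}(1). Closure under direct summands holds because each $\B_i$ is summand closed and $H_i$ is additive. For extensions, take a triangle $x\to y\to z\to$ with $x,z\in\X_{\B}$; then $y\in D_\ell$ by Proposition \ref{prop 3.3}. The long exact sequence gives
\[0\to\mathrm{Coker}\,\delta_{i+1}\to H_i(y)\to\mathrm{Ker}\,\delta_i\to0 .\]
Here $\delta_{i+1}\colon H_{i+1}(z)\to H_i(x)$ goes from $\B_{i+1}$ to $\B_i$, so (KC) gives $\mathrm{Coker}\,\delta_{i+1}\in\B_i$. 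Likewise $\delta_i\colon H_i(z)\to H_{i-1}(x)$ goes from $\B_i$ to $\B_{i-1}$, so (KC) gives $\mathrm{Ker}\,\delta_i\in\B_i$. At the top layer $\delta_{\ell+1}=0$, and at the bottom layer $\B_0=\mA$, so these cases are harmless. Extension closure of $\B_i$ then gives $H_i(y)\in\B_i$.

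\textbf{Step 3: a closed $\X$ yields a compatible sequence.} Set $\B_i=\{a\mid\Sigma^ia\in\X\}$. Then $\B_0=\mA$ since $\mA\subseteq\X$. Each $\B_i$ is additive and summand closed. It is extension closed because properness makes a short exact sequence in $\mA$ a triangle, and $\Sigma^i$ of that triangle lies in $\X$.

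For (KC), take $f\colon a\to b$ with $a\in\B_i$ and $b\in\B_{i-1}$. I regard $f$ as the map $\Sigma^{i-1}a\to\Sigma^{i-1}b$ and take the cone $c$, so there is a triangle
\[\Sigma^{i-1}b\to c\to\Sigma^ia\to .\]
Then $c\in\X$ by extension closure. From the octahedral argument of Lemma \ref{lem 5.1} (the case $q=p+1$, which produces a triangle with $\Sigma^i\mathrm{Ker} f$ and $\Sigma^{i-1}\mathrm{Coker} f$), together with Step 1 applied to the cone, I expect $H_i(c)\cong\mathrm{Ker} f$, $H_{i-1}(c)\cong\mathrm{Coker} f$, and all other layers zero. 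Pure-layer stability then puts $\Sigma^i\mathrm{Ker} f$ and $\Sigma^{i-1}\mathrm{Coker} f$ in $\X$, which is (KC). This identification of the layers of the cone is the step I expect to need the most care. The cleanest route is to check it directly through the long exact sequence of the triangle above, where the connecting map $\delta_i$ is $f$ up to the shift identification; Lemma \ref{lem:edge-characterization} should give exactly this.

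\textbf{Step 4: the two maps are mutually inverse poset maps.} First, $\X\subseteq\X_{\B}$ by pure-layer stability. Conversely, Step 1 gives $\X_{\B}=\Sigma^\ell\B_\ell*\cdots*\B_0\subseteq\X$ by extension closure. In the other direction, the sequence attached to $\X_{\B}$ recovers $\B$, since $H_j(\Sigma^ia)$ equals $a$ if $j=i$ and $0$ otherwise, and $0\in\B_j$. Both assignments visibly preserve inclusion, so they give the asserted isomorphism of posets.
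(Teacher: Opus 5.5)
Your proposal is correct and follows essentially the same route as the paper: extension closure via the short exact sequences $0\to\mathrm{Coker}\,\delta_{i+1}\to H_i(Y)\to\mathrm{Ker}\,\delta_i\to0$ and (KC); (KC) via the cone of $\Sigma^i f$, whose layers you read off from the long exact sequence with boundary $\pm f$, exactly as the paper does (your octahedral alternative via Lemma~\ref{lem 5.1} and uniqueness of the block decompositions also works); and reconstruction via the layer tower plus extension closure. The differences are minor: you get the product formula directly from uniqueness of the top-layer $s$-torsion decompositions, so it holds without (KC), whereas the paper deduces it from extension closure of $\X_{\B}$; and for summand closure of $\X_{\B}$ you should also cite that $D_\ell$ itself is closed under direct summands, as the paper does.
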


\begin{proof}
	First let $\B$ be compatible. The category $\X_{\B}$ contains $\mA$ by
	normalization, and is additive and closed under direct summands by
	additivity of the $H_i$ and direct-summand closure of $D_\ell$.
	
	Consider a triangle $X\to Y\to Z\to\Sigma X$ with $X,Z\in\X_{\B}$.
	Extension closure of $D_\ell$ gives $Y\in D_\ell$. The long
	exact sequence gives, for $0\leq i\leq\ell$,
	\begin{equation}\label{eq 7.1}
		0\longrightarrow\mathrm{Coker}\delta_{i+1}
		\longrightarrow H_i(Y)\longrightarrow\mathrm{Ker}\delta_i
		\longrightarrow0,
	\end{equation}
	where $\delta_i:H_i(Z)\to H_{i-1}(X)$.
	At the endpoints use $\delta_{\ell+1}:0\to H_\ell(X)$ and
	$\delta_0:H_0(Z)\to0$.
	Condition (KC) places the cokernel and kernel in \eqref{eq 7.1} in $\B_i$;
	at the endpoints the corresponding assertion is immediate.
	Extension closure of $\B_i$ gives $H_i(Y)\in\B_i$, so
	$Y\in\X_{\B}$. Normalization shows that $\X_{\B}$ is stable under taking
	pure homology layers.
	
	Conversely, let $\X$ be as stated and set
	$\B_i=\{a\in\mA\mid\Sigma^ia\in\X\}$. Additivity and direct-summand closure
	are immediate, and $\B_0=\mA$. A short exact sequence in $\mA$ with its
	end terms in $\B_i$ becomes, after suspension, a triangle with end terms
	in $\X$. Properness and extension closure of $\X$ therefore show that
	$\B_i$ is extension closed.
	
	For $f:a\to b$ as in (KC), complete $\Sigma^if$ to a triangle
	\begin{equation}
		\Sigma^{i-1}b\longrightarrow y\longrightarrow
		\Sigma^i a\xrightarrow{\Sigma^if}\Sigma^i b.
	\end{equation}
	Its end terms lie in $\X$, so $y\in\X\subseteq D_\ell$. Normalization and
	the long exact sequence yield
	\begin{equation}
		0\longrightarrow H_i(y)\longrightarrow a
		\xrightarrow{\ \pm f\ } b\longrightarrow H_{i-1}(y)
		\longrightarrow0,
	\end{equation}
	and all other homology objects vanish. Here the boundary is $f$ up to
	the fixed rotation sign: the cross-window construction on pure objects
	is the ordinary shifted map. Hence $H_i(y)\cong\mathrm{Ker} f$ and
	$H_{i-1}(y)\cong\mathrm{Coker} f$. Stability under taking pure layers proves (KC).
	
	It remains to prove reconstruction. For $x\in\X$, stability gives
	$H_i(x)\in\B_i$, hence $x\in\X_{\B}$. If $x\in\X_{\B}$, all of its pure
	layers $\Sigma^iH_i(x)$ lie in $\X$. The canonical finite layer tower
	expresses $x$ as an iterated extension of these layers, so $x\in\X$.
	Normalization also recovers each $\B_i$ from $\X_{\B}$.
	The displayed extension-product formula follows from this tower and
	extension closure of $\X_{\B}$. Both assignments preserve inclusion.
\end{proof}

\begin{cor}
	\label{cor:top}
	Compatible sequences with $\B_i=\mA$ for $0\leq i<\ell$ are exactly those
	with $\B_\ell$ a torsion-free class in $\mA$. They classify the top-layer
	intermediate subcategories stable under taking their top pure layer.
	If the highest-layer exchange equality holds,  Lemma \ref{lem 5.4}
	shows that every top-layer intermediate subcategory has this stability.
\end{cor}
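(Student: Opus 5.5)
The corollary has three assertions, and I would derive each from Theorem~\ref{thm:layers} together with Lemma~\ref{lem 5.4}.

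\emph{First assertion.} Let $\B_i=\mA$ for $0\leq i<\ell$. Condition (KC) for indices $1\leq i\leq\ell-1$ holds automatically, since kernels and cokernels of morphisms in $\mA$ lie in $\mA$. So the only content is (KC) at $i=\ell$: for every $f:a\to b$ with $a\in\B_\ell$ and $b\in\mA$, we need $\mathrm{Ker}f\in\B_\ell$. The cokernel condition is again automatic.
\begin{itemize}
\item If $\B_\ell$ is a torsion-free class, then $\mathrm{Ker}f$ is a subobject of $a$, hence lies in $\B_\ell$. Extension closure and summand closure hold by definition and by the remark that torsion-free classes are closed under summands.
\item Conversely, if the sequence is compatible, then any subobject $u\hookrightarrow a$ is the kernel of the quotient $a\to a/u$. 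The target lies in $\mA=\B_{\ell-1}$, so (KC) gives $u\in\B_\ell$. Together with extension closure from condition (1), $\B_\ell$ is a torsion-free class.
\end{itemize}

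\emph{Second assertion.} Apply Theorem~\ref{thm:layers}. A subcategory $\X$ corresponds to such a sequence exactly when $\Sigma^i a\in\X$ for all $a\in\mA$ and all $i<\ell$. Since $\X$ is extension closed and $\mD_{\ell-1}=\Sigma^{\ell-1}\mA*\cdots*\mA$, this is equivalent to $\mD_{\ell-1}\subseteq\X$, that is, to $\X$ being an $\ell$-layer intermediate subcategory.

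For such $\X$, stability under all pure layers reduces to stability under the top pure layer $\Sigma^\ell H_\ell(x)$. The lower layers $\Sigma^iH_i(x)$ with $i<\ell$ lie in $\Sigma^i\mA\subseteq\mD_{\ell-1}\subseteq\X$ automatically. Hence compatible sequences of this form classify exactly the top-layer intermediate subcategories stable under their top pure layer, and the formula $\X_\B=\Sigma^\ell\B_\ell*\mD_{\ell-1}$ follows from Theorem~\ref{thm:layers}.

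\emph{Third assertion.} Assume the highest-layer exchange equality $(\mathrm{EX}_\ell)$. Since every $\ell$-layer intermediate subcategory satisfies $\X\subseteq\mD_\ell=\Sigma^\ell\mA*\mD_{\ell-1}$, the equality gives $\X\subseteq\mD_{\ell-1}*\Sigma^\ell\mA$. This is exactly the hypothesis of Lemma~\ref{lem 5.4}, which then yields $\Sigma^\ell H_\ell(x)\in\X$ for every $x\in\X$. The remaining pure layers were already handled above.

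None of these steps is a real obstacle. The only care needed is the identification $\mD_\ell=\Sigma^\ell\mA*\mD_{\ell-1}$ by associativity of $*$, and the observation that containing every lower pure layer is equivalent to containing $\mD_{\ell-1}$.
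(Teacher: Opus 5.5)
Your proposal is correct and follows the paper's own argument. Only (KC) at $i=\ell$ adds a restriction, and a subobject $u\subseteq a$ is the kernel of $a\to a/u$. The lower pure layers already lie in $\mD_{\ell-1}$, and Lemma~\ref{lem 5.4} gives top-layer stability under the exchange equality. You also make explicit two steps the paper leaves implicit: $\B_i=\mA$ for $i<\ell$ is equivalent to $\mD_{\ell-1}\subseteq\X$, and the exchange equality places every $\X\subseteq\mD_\ell$ inside $\mD_{\ell-1}*\Sigma^\ell\mA$.
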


\begin{proof}
	Only (KC) at $i=\ell$ imposes a new restriction. If $\B_\ell$ is
	torsion free, kernels of maps from it are subobjects and belong to it;
	the cokernels belong to $\mA$. Conversely, for a subobject $u\subseteq a$
	with $a\in\B_\ell$, apply (KC) to $a\to a/u$ to obtain $u\in\B_\ell$.
	Extension closure is already part of compatibility. The final statement
	uses Lemma \ref{lem 5.4}; lower layers already lie in $\mD_{\ell-1}$.
\end{proof}

\begin{rem}
	The use of sequences of subcategories to control homology is established
	in the literature on narrow sequences and homology-determined preaisles,
	including Stanley--van Roosmalen \cite{Stanley2019}, Sakai \cite{Sakai2023} and Hanson~\cite{Hanson2024}.
	Theorem~\ref{thm:layers} is formulated for a finite window of a proper
	abelian subcategory, with neither an ambient $t$-structure nor shift
	closure of $\X$. Thus its sequences need not be nested and should not
	be identified with ICE sequences without additional arguments.
	The proof mechanism is closely related to the kernel--cokernel
	conditions behind those earlier classifications. 
\end{rem}

\subsection{Complete classification in split windows}

\begin{thm}
\label{thm:split}
Keep the assumptions of subsection \ref{subxection 7.2} and suppose in addition that
\[
\mathrm{Hom}_\mathcal{C}(\mA,\Sigma^q\mA)=0\qquad(2\leq q\leq\ell+1).
\tag{P}
\]
Then every $x\in D_\ell$ admits an isomorphism
\begin{equation}\label{composition}
	x\cong\bigoplus_{i=0}^{\ell}\Sigma^iH_i(x).
\end{equation}
The isomorphism need not be natural. Consequently,
Theorem~\ref{thm:layers} classifies \emph{all} extension-closed,
direct-summand-closed subcategories
$\mA\subseteq\X\subseteq D_\ell$.	
\end{thm}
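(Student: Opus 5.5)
We prove the splitting \eqref{composition} by induction along the canonical top-layer decomposition, and then deduce the classification from Theorem~\ref{thm:layers}. For $x\in\mD_r$ with $1\le r\le\ell$, take the top-layer triangle \eqref{eq:top-truncation}
\[
\Sigma^rH_r(x)\xrightarrow{\iota_x^r}x\xrightarrow{\pi_x^r}x^{<r}\xrightarrow{\theta}\Sigma^{r+1}H_r(x).
\]
The connecting morphism $\theta$ lies in $\mathrm{Hom}_{\mC}(\mD_{r-1},\Sigma^{r+1}\mA)$. On pure layers $\Sigma^j\mA$ with $0\le j\le r-1$ we have
\[
\mathrm{Hom}_{\mC}(\Sigma^j\mA,\Sigma^{r+1}\mA)\cong\mathrm{Hom}_{\mC}(\mA,\Sigma^{r+1-j}\mA),
\]
and here $2\le r+1-j\le r+1\le\ell+1$. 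Hypothesis (P) kills each of these groups, so Lemma~\ref{lem 3.2} gives $\theta=0$. This is the argument of Corollary~\ref{cor:positive-vanishing-exchange}, applied at level $r$ instead of $\ell$. The triangle therefore splits, and $x\cong\Sigma^rH_r(x)\oplus x^{<r}$.

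By Proposition~\ref{prop 3.7}(3), $H_i(x^{<r})\cong H_i(x)$ for $i<r$. Since $x^{<r}\in\mD_{r-1}$, induction on $r$ (the case $r=0$ being trivial) gives
\[
x^{<r}\cong\bigoplus_{i<r}\Sigma^iH_i(x).
\]
Applying this with $r=\ell$, and using Proposition~\ref{prop 3.7}(2) to identify $H^{[r]}_i$ with $H_i=H^{[\ell]}_i$ on $\mD_r$, yields \eqref{composition}. The isomorphism depends on the chosen splittings, and these are unique only up to $\mathrm{Hom}(x^{<r},\Sigma^rH_r(x))$. That group need not vanish, which is why the isomorphism is not asserted to be natural.

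For the consequence, let $\mA\subseteq\X\subseteq D_\ell$ be closed under extensions and direct summands. It suffices to show that $\X$ is stable under taking pure homology layers, because Theorem~\ref{thm:layers} then applies to $\X$. Given $x\in\X$, each $\Sigma^iH_i(x)$ is a direct summand of $x$ by \eqref{composition}. Since the summand $\Sigma^iH_i(x)$ is isomorphic to a summand of an object of $\X$, and $\X$ is replete and closed under direct summands, it lies in $\X$. Hence every such $\X$ lies in the image of the bijection of Theorem~\ref{thm:layers}.

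The only real obstacle is the vanishing of $\theta$. The range of exponents must be checked carefully: it is exactly $[2,r+1]$, which is covered by (P) because $r\le\ell$. The rest is bookkeeping with the comparison isomorphisms of Proposition~\ref{prop 3.7}.
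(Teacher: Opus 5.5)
Your proposal is correct and follows the paper's proof essentially step for step. You split the top-layer triangle using (P) on the exponent range $[2,r+1]$ together with Lemma~\ref{lem 3.2}, induct along the residual tower via Proposition~\ref{prop 3.7}(3), and then obtain stability under pure layers from direct-summand closure so that Theorem~\ref{thm:layers} applies.
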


\begin{proof}
	For $1\leq r\leq\ell$, the canonical top decomposition of an object
	$x\in D_r$ is
	\[
	\Sigma^rH_r(x)\longrightarrow x\longrightarrow x^{<r}
	\longrightarrow\Sigma^{r+1}H_r(x),\qquad x^{<r}\in D_{r-1}.
	\]
	Its connecting morphism vanishes: for $0\leq j\leq r-1$,
	\[
	\mathrm{Hom}_{\mC}(\Sigma^j\mA,\Sigma^{r+1}\mA)
	\cong\mathrm{Hom}_{\mC}(\mA,\Sigma^{r+1-j}\mA)=0
	\]
	by (P), and vanishing extends to $D_{r-1}$. Thus the triangle splits.
	Induction on $r$, using stability of lower homology along the residual
	tower, proves \eqref{composition}.
	Every direct-summand-closed $\X\subseteq D_\ell$ is now stable under
	taking pure layers. Theorem~\ref{thm:layers} applies.
\end{proof}

\begin{rem}
	For $\mA=\mathrm{mod}\,k$, every extension-closed additive subcategory of
	$\mA$ closed under direct summands is either $0$ or $\mA$. Every choice
	of $\B_1,\ldots,\B_\ell$ from these two categories satisfies (KC).
	Hence the classification gives $2^\ell$ subcategories containing $\mA$.
	For $\ell=2$, these are precisely the four subcategories in
	 Counterexample \ref{Ctex 7.1}. 
\end{rem}

\section{Examples and applications}

This section illustrates the homology construction and the classification results in three settings. For hearts of $t$-structures, our construction recovers
ordinary cohomology. We then consider negative cluster
categories and periodic derived orbit categories.
The latter give non-semisimple examples satisfying
\eqref{eq:global-vanishing} and $(P)$, with nonzero connecting morphisms.

\subsection{Hearts of \texorpdfstring{\(t\)}{t}-structures}

Let \((\mathcal C^{\leq0},\mathcal C^{\geq0})\) be a
\(t\)-structure on \(\mathcal C\), with heart
\(\mathcal{H}=\mathcal C^{\leq0}\cap\mathcal C^{\geq0}\), and write
\(\mathcal H^j\) for its cohomology functors.  The heart is a
proper abelian subcategory of \(\mathcal C\), and the orthogonality axiom of
the \(t\)-structure gives
\[
\operatorname{Hom}_{\mathcal C}(\Sigma^p\mathcal H,\mathcal H)=0
\qquad\text{for every }p\geq1.
\]
In particular, \eqref{eq:global-vanishing} holds for every \(\ell\geq1\).

\begin{prop}\label{prop:t-heart-identification}
	For \(0\leq i\leq\ell\), there is a natural isomorphism
	\[
	H_i|_{\mathcal D_\ell}
	\cong \mathcal H^{-i}|_{\mathcal D_\ell}.
	\]
	Under these isomorphisms, the connecting morphisms of Theorem \ref{thm:main-long-exact} agree with
	the usual connecting morphisms of cohomology.  Hence the long
	exact sequence of Theorem \ref{thm:main-long-exact} is the ordinary cohomology sequence restricted
	to \(\mathcal D_\ell\).
\end{prop}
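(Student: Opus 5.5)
The plan is to identify the $s$-torsion decompositions of Section~\ref{section 3} with $t$-structure truncations, and then to transport the connecting maps. Write $\tau^{\geq -i}$, $\tau^{\leq -i}$ for the truncation functors, using the cohomological convention $\mathcal C^{\leq -i}=\Sigma^i\mathcal C^{\leq0}$. I will prove three things in order: where the windows sit, the identification of $H_i$, and the identification of the boundaries.

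\textbf{Windows.} First I check that $\mathcal D_{[b,a]}\subseteq\mathcal C^{\leq -a}\cap\mathcal C^{\geq -b}$. This holds because $\Sigma^j\mathcal H$ lies there for $a\le j\le b$, and both aisle and coaisle are extension closed.

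\textbf{Identification of $H_i$.} For $x\in\mathcal D_n$, the truncation triangle $\tau^{\leq -i}x\to x\to\tau^{\geq -i+1}x\to$ has its outer terms in the target classes. The cone of a morphism between objects of $\mathcal D_n$ with bounded cohomology in degrees $[-n,0]$ lies in $\mathcal C^{\leq -i}\cap\mathcal C^{\geq -n}$. I need this to be in $\mathcal D_{[n,i]}$. Here I expect the key input: an object of $\mathcal C^{\leq -i}\cap\mathcal C^{\geq -n}$ lies in $\Sigma^n\mathcal H*\cdots*\Sigma^i\mathcal H$. This follows by the standard induction using the truncation triangles $\Sigma^{n}\mathcal H^{-n}(y)\to y\to\tau^{\geq -n+1}y$, since every truncation of a bounded object has bounded cohomology.

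Hence the truncation triangle is a $(\mathcal D_{[n,i]},\mathcal D_{i-1})$-decomposition of $x$. Uniqueness in the $s$-torsion proposition \cite[Proposition~3.7]{Adachi2023} then gives natural isomorphisms $U_i^{[n]}\cong\tau^{\leq -i}$ and $L_{i-1}^{[n]}\cong\tau^{\geq -i+1}$, compatible with $e_i$ and $p_{i-1}$. Applying the same uniqueness to the elementary pair $(\Sigma^i\mathcal H,\mathcal D_{i-1})$ acting on $L_i^{[n]}x\cong\tau^{\geq -i}x$ identifies $\Sigma^iH_i^{[n]}(x)$ with $\tau^{\leq -i}\tau^{\geq -i}x=\Sigma^i\mathcal H^{-i}(x)$, naturally in $x$. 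This gives $H_i\cong\mathcal H^{-i}$ on $\mathcal D_\ell$.

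\textbf{Identification of the boundaries.} For the top boundary I use Lemma~\ref{lem:edge-characterization}. Via the truncation identifications, equation~\eqref{eq:edge-characterization} is exactly the defining property of the usual cohomological boundary in top degree: the composite $\Sigma^r\mathcal H^{-r}(Z)\to Z\xrightarrow h\Sigma X\to\Sigma\tau^{\geq -r+1}X$ factors through $\Sigma\cdot\Sigma^{r-1}\mathcal H^{-r+1}(X)$. Uniqueness in that lemma then forces agreement, up to the global rotation sign.

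For lower $i$, I would argue directly from the definition~\eqref{eq:cross-window-boundary}. The lift $u_i$ of Lemma~\ref{lem:cross-window} is the map $\tau^{\leq -i}h:\tau^{\leq -i}Z\to\tau^{\leq -i}\Sigma X$, since that map satisfies the defining equation $e_i^+u_i=he_i$ and the lift is unique. The induced map on pure quotients is therefore $\mathcal H^{-i}(h)$, precomposed with the identification $\mathcal H^{-i}(\Sigma X)=\mathcal H^{-i+1}(X)$. This is the standard definition of the cohomological connecting morphism, namely cohomology applied to $h$.

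\textbf{Main obstacle.} I expect the hardest step to be the identification of the truncated classes with the windows, together with tracking the shift isomorphism $\sigma$ and its sign consistently against the standard convention. With the identifications in hand, exactness and naturality need no new argument: they then agree with the ordinary long exact sequence, which completes the proposition.
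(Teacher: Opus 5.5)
Your proposal is correct and follows essentially the same route as the paper. Truncation triangles are recognized as the block $s$-torsion decompositions by uniqueness, adjacent cuts identify the pure layers with $\Sigma^i\mathcal H^{-i}(x)$, and uniqueness of the lift in Lemma~\ref{lem:cross-window} (forcing $u_i=\tau^{\leq -i}h$) identifies the connecting morphisms. You also supply details the paper only asserts, notably $\mathcal C^{\leq -i}\cap\mathcal C^{\geq -n}=\mathcal D_{[n,i]}$. Two small remarks: your separate top-degree argument via Lemma~\ref{lem:edge-characterization} is redundant, since the lift argument already covers $i=r$. And the shift identification $\mathcal H^{-i}(\Sigma X)\cong\mathcal H^{-i+1}(X)$ is composed after $\mathcal H^{-i}(h)$, not before.
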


\begin{proof}
	For \(x\in\mathcal D_n\), the standard truncation triangle at the cut \(i\)
	has one term in
	\(\mathcal D_{[n,i]}\) and the other in \(\mathcal D_{i-1}\).  It is
	therefore an \(s\)-torsion decomposition for the block pair
	\((\mathcal D_{[n,i]},\mathcal D_{i-1})\) of Proposition~\ref{prop 3.4}.  By uniqueness
	of \(s\)-torsion decompositions, it agrees naturally with the block triangle
	used in Section~\ref{section 3}.  Passing from two adjacent cuts identifies the pure layer
	in (\ref{eq:lower-layer-tower}) with
	\(\Sigma^i\mathcal H^{-i}(x)\), and hence identifies
	\(H_i(x)\) with \(\mathcal H^{-i}(x)\).
	
	Both connecting morphisms are characterized by the morphism from the shifted
	top layer of the third term to the adjacent layer of the first term induced
	by a morphism of truncation triangles.  The uniqueness used in Lemma~\ref{lem:cross-window}
	therefore identifies them.  Naturality follows either from this
	characterization or from Proposition~\ref{prop:boundary-naturality}.
\end{proof}

The classification theorem also specializes cleanly in the standard derived
setting.

\begin{cor}
	\label{cor:derived-heart-classification}
	Let \(\mathcal B\) be an abelian category for which
	\(\mathrm D^{\mathrm b}(\mathcal B)\) is Krull--Schmidt, let
	\(\mathcal C=\mathrm D^{\mathrm b}(\mathcal B)\), and identify
	\(\mathcal B\) with the standard heart.  For every \(\ell\geq1\),
	\[
	\Sigma^\ell\mathcal B*\mathcal D_{\ell-1}
	=\mathcal D_{\ell-1}*\Sigma^\ell\mathcal B.
	\]
	Consequently, Theorem~\ref{thm 6.3} gives an order-preserving bijection
	\[
	\left\{\text{torsion-free classes }\mathcal F\subseteq\mathcal B\right\}
	\overset{1:1}{\longleftrightarrow} 
	\left\lbrace \ell\text{-layer intermediate subcategories} \right\rbrace .
	\]
	Moreover, \(H_i(x)\cong \mathcal H^{-i}(x)\) for
	\(x\in\mathcal D_\ell\).
\end{cor}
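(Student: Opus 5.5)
The plan is to check the hypotheses of Theorem~\ref{thm 6.3} for the standard heart, prove the exchange equality directly with brutal truncations of complexes, and then read off the remaining assertions.

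\textbf{Standing hypotheses.} The standard heart $\mathcal B\subseteq\mathcal C=\mathrm D^{\mathrm b}(\mathcal B)$ is the heart of the standard $t$-structure. By the discussion opening this subsection it is an extension-closed proper abelian subcategory, and it satisfies $\operatorname{Hom}_{\mathcal C}(\Sigma^p\mathcal B,\mathcal B)=0$ for all $p\geq1$. Hence \eqref{eq:global-vanishing} holds for every $\ell\geq1$. Together with the Krull--Schmidt assumption, all results of Sections~3--6 apply. The last assertion, $H_i(x)\cong\mathcal H^{-i}(x)$, is then exactly Proposition~\ref{prop:t-heart-identification}. What remains is the exchange equality, after which the bijection is Theorem~\ref{thm 6.3}.

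\textbf{Identifying $\mathcal D_\ell$.} By \eqref{eq:automatic-exchange-inclusion} it suffices to prove $\Sigma^\ell\mathcal B*\mathcal D_{\ell-1}\subseteq\mathcal D_{\ell-1}*\Sigma^\ell\mathcal B$. I do not expect to use the positive vanishing criterion (Corollary~\ref{cor:positive-vanishing-exchange}), since $\operatorname{Ext}^q_{\mathcal B}$ need not vanish for $q\geq2$. Instead I argue with complexes. First I identify $\mathcal D_\ell$ with the objects whose cohomology is concentrated in degrees $[-\ell,0]$.
\begin{itemize}
\item[] One inclusion follows from the long exact cohomology sequence and induction on $\ell$.
\item[] The other follows from the standard truncation triangles: $\tau^{\leq-\ell}x\cong\Sigma^\ell\mathcal H^{-\ell}(x)$, and $\tau^{>-\ell}x$ has cohomology in $[-\ell+1,0]$.
\end{itemize}
Next, any such object is isomorphic in $\mathrm D^{\mathrm b}(\mathcal B)$ to a complex $X^{-\ell}\to\cdots\to X^0$ concentrated in degrees $[-\ell,0]$. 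To see this, take a bounded representative and apply the smart truncations at both ends: replace the low end by a cokernel and the high end by a kernel. These are quasi-isomorphic operations.

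\textbf{The exchange triangle.} For such a representative of $x\in\Sigma^\ell\mathcal B*\mathcal D_{\ell-1}$, consider the degreewise split short exact sequence of complexes given by the brutal truncations
\[
0\longrightarrow\sigma^{\geq-\ell+1}x\longrightarrow x\longrightarrow X^{-\ell}[\ell]\longrightarrow0 .
\]
The quotient map is a chain map because nothing lies in degree $-\ell-1$ and the target is zero in degree $-\ell+1$. This sequence yields a triangle
\[
\sigma^{\geq-\ell+1}x\longrightarrow x\longrightarrow\Sigma^\ell X^{-\ell}\longrightarrow\Sigma\,\sigma^{\geq-\ell+1}x .
\]
Here $X^{-\ell}\in\mathcal B$, and $\sigma^{\geq-\ell+1}x$ is a complex in degrees $[-\ell+1,0]$, so it lies in $\mathcal D_{\ell-1}$ by the identification above. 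Hence $x\in\mathcal D_{\ell-1}*\Sigma^\ell\mathcal B$.

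\textbf{Main obstacle and conclusion.} The only real obstacle is the identification of $\mathcal D_\ell$ with the cohomological window and the existence of a representative concentrated in degrees $[-\ell,0]$; both are standard facts about $\mathrm D^{\mathrm b}$ of an abelian category. Given them, Theorem~\ref{thm 6.3} gives the order-preserving bijection $\mathcal F\mapsto\Sigma^\ell\mathcal F*\mathcal D_{\ell-1}$.
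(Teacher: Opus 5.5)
Your proposal is correct and takes essentially the same route as the paper. You identify $\mathcal D_\ell$ with the objects whose cohomology lies in degrees $[-\ell,0]$, take a representative concentrated in those degrees, and use the brutal truncation at degree $-\ell$ to get $x\in\mathcal D_{\ell-1}*\Sigma^\ell\mathcal B$; the reverse inclusion is automatic, and you conclude via Theorem~\ref{thm 6.3} and Proposition~\ref{prop:t-heart-identification}, adding only a few justifications (the smart-truncation choice of representative and the chain-map check) that the paper leaves implicit.
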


\begin{proof}
	The standard Postnikov tower identifies \(\mathcal D_\ell\) with the objects
	whose cohomology is concentrated in degrees \([-\ell,0]\), and it gives
	\(\mathcal D_\ell=\Sigma^\ell\mathcal B*\mathcal D_{\ell-1}\).
	Choose a representative of such an object by a complex concentrated in
	degrees \([-\ell,0]\).  Brutal truncation at the left endpoint gives a
	triangle
	\[
	\sigma_{>-\ell}x\longrightarrow x\longrightarrow
	\Sigma^\ell x^{-\ell}\longrightarrow
	\Sigma\sigma_{>-\ell}x,
	\]
	where \(\sigma_{>-\ell}x\in\mathcal D_{\ell-1}\) and
	\(x^{-\ell}\in\mathcal B\).  Hence
	\(\mathcal D_\ell\subseteq
	\mathcal D_{\ell-1}*\Sigma^\ell\mathcal B\); the reverse inclusion follows
	from extension closure of \(\mathcal D_\ell\).  The result now follows from
	Theorem~\ref{thm 6.3} and Proposition~\ref{prop:t-heart-identification}.
\end{proof}

For an arbitrary heart in an arbitrary triangulated category, Proposition
\ref{prop:t-heart-identification} still holds, but the highest-layer exchange
equality need not be automatic.  Thus the homology theorem applies to every
heart, while the classification theorem applies when the additional exchange
condition of Theorem~\ref{thm 6.3} is available.

\subsection{Negative cluster categories}

Let \(Q\) be a finite acyclic quiver, let \(w\geq2\), and consider the
negative cluster category
\[
\mathcal C_{-w}(Q)
=\mathrm D^{\mathrm b}(\operatorname{mod}kQ)/
(\tau\Sigma^{w+1}).
\]
In the standard Hom-finite cases this is a Krull--Schmidt triangulated orbit
category \cite{Keller2005}.  Let \(\mathcal S\) be a \(w\)-simple-minded
system and put
\[
\mathcal A:=\langle\mathcal S\rangle,
\]
the extension closure of \(\mathcal S\).  J\o rgensen's theorem shows that
\(\mathcal A\) is a finite-length proper abelian subcategory, usually not the
heart of a \(t\)-structure \cite{Jorgensen2022}.  The defining
\(w\)-orthogonality of \(\mathcal S\), together with extension closure in
both variables, gives
\[
\operatorname{Hom}_{\mathcal C_{-w}(Q)}
(\Sigma^p\mathcal A,\mathcal A)=0
\qquad (1\leq p\leq w-1).
\]

\begin{prop}
	\label{prop:negative-cluster-homology}
	Let \(1\leq\ell\leq w-2\).  For
	\[
	\mathcal D_\ell
	=\Sigma^\ell\mathcal A*\cdots *\Sigma\mathcal A*\mathcal A,
	\]
	the functors \(H_0,\ldots,H_\ell\) and the natural long exact sequence of
	Theorem~\ref{thm:main-long-exact} are defined.  If, in addition,
	\[
	\Sigma^\ell\mathcal A*\mathcal D_{\ell-1}
	=\mathcal D_{\ell-1}*\Sigma^\ell\mathcal A,
	\]
	then Theorem~\ref{thm 6.3} identifies the poset of \(\ell\)-layer intermediate
	subcategories with the poset of torsion-free classes in \(\mathcal A\).
\end{prop}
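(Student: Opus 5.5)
The plan is to reduce the statement to the standing hypotheses of Sections~3--6. That is, we check that $\mathcal A=\langle\mathcal S\rangle$ is a full, replete, extension-closed proper abelian subcategory of the Krull--Schmidt category $\mathcal C_{-w}(Q)$ satisfying \eqref{eq:global-vanishing}. Once this is known, Theorem~\ref{thm:main-long-exact} supplies the functors $H_0,\ldots,H_\ell$ and the natural long exact sequence. Under the extra exchange equality, Theorem~\ref{thm 6.3} then gives the poset isomorphism with torsion-free classes.

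\textbf{Verifying the hypotheses.} By J\o rgensen's theorem \cite{Jorgensen2022}, $\mathcal A$ is a proper abelian subcategory. It is extension closed by construction, being the extension closure of $\mathcal S$. Repleteness and fullness hold by our global conventions. The Krull--Schmidt property of $\mathcal C_{-w}(Q)$ is the standard Hom-finite orbit-category fact \cite{Keller2005}, which the standing assumptions of Section~6 require.

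\textbf{The vanishing range.} The only numerical point is that $(E_{\ell+1})$ means
\[
\operatorname{Hom}(\Sigma^p\mathcal A,\mathcal A)=0\qquad (1\leq p\leq \ell+1).
\]
Since $\ell\leq w-2$, we have $\ell+1\leq w-1$, so this range lies inside the range $1\leq p\leq w-1$ of the displayed $w$-orthogonality vanishing. To justify that displayed vanishing, I would start from the defining condition of a $w$-simple-minded system,
\[
\operatorname{Hom}(\Sigma^p s,s')=0\qquad (s,s'\in\mathcal S,\ 1\leq p\leq w-1),
\]
and extend it to $\mathcal A$ in both variables. Lemma~\ref{lem 3.2}, applied with $\mathcal X_i=\Sigma^p\mathcal S$ and $\mathcal Y_j=\mathcal S$, does this, because $\Sigma^p\langle\mathcal S\rangle$ is contained in iterated extension products of $\Sigma^p\mathcal S$ and $\langle\mathcal S\rangle$ in iterated products of $\mathcal S$. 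This gives $(E_{\ell+1})$, and both conclusions then follow by citation.

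\textbf{Main obstacle.} I expect the only delicate point to be confirming that the convention for $w$-simple-minded systems in \cite{Jorgensen2022} yields vanishing of $\operatorname{Hom}(\Sigma^p\mathcal S,\mathcal S)$ for exactly $1\leq p\leq w-1$ in the negative cluster category $\tau\Sigma^{w+1}$-orbit setting, and not a shifted range. If the convention instead gives $\operatorname{Hom}(\Sigma^{-p}\mathcal S,\mathcal S)=0$, I would apply Serre duality in $\mathcal C_{-w}(Q)$ to translate it: Serre functor $\tau\Sigma$ and $\tau\Sigma^{w+1}\cong 1$ give $\Sigma^{-w}$ as Serre functor. I would then check that the resulting range still contains $[1,\ell+1]$. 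Once this range is confirmed, the rest of the argument is a direct citation.
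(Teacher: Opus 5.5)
Your proposal is correct and follows the paper's argument. Since $\ell\le w-2$ means $\ell+1\le w-1$, the $w$-orthogonality of $\mathcal S$ gives $(E_{\ell+1})$ once it is extended to $\mathcal A=\langle\mathcal S\rangle$ in both variables. Your appeal to Lemma~\ref{lem 3.2} makes that extension explicit, where the paper only asserts it. After that, Theorem~\ref{thm:main-long-exact} and, using the Krull--Schmidt property, Theorem~\ref{thm 6.3} apply directly.

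Your convention worry does not arise. The convention of \cite{Jorgensen2022} used in Proposition~\ref{prop:Cminus4-A1} is $\mathrm{Hom}(s,\Sigma^{-p}t)=0$ for $1\le p\le w-1$, which is exactly your hypothesis. Note, though, that your Serre-duality fallback would not have rescued a positive-direction convention. Duality with $\mathbb{S}\cong\Sigma^{-w}$ maps the range $1\le p\le w-1$ for $\mathrm{Hom}(\Sigma^{p}\mathcal S,\mathcal S)$ to itself. It would turn $\mathrm{Hom}(s,\Sigma^{p}t)=0$ into vanishing of $\mathrm{Hom}(\Sigma^{q}\mathcal S,\mathcal S)$ only for $w+1\le q\le 2w-1$, which misses $[1,\ell+1]$.
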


\begin{proof}
	The inequality \(\ell\leq w-2\) is equivalent to
	\(\ell+1\leq w-1\), so the preceding orthogonality implies 
	\((E_{\ell+1})\).  Theorem~4.1 gives the first assertion.  The second follows
	from Theorem~6.4, since the negative cluster categories under consideration
	are Krull--Schmidt.
\end{proof}

The $w$-orthogonality of $\mathcal S$ gives
$(E_{\ell+1})$ for $\ell \leq w-2$.
The highest-layer exchange equality is an additional
hypothesis, verified in the following example.

\begin{prop}\label{prop:Cminus4-A1}
	Let $k$ be a field and let
	\[
	\mC=\mC_{-4}(A_1)
	=\mathrm D^{\mathrm b}(\modu k)/(\tau\Sigma^5).
	\]
	Let $s$ be the image of the one-dimensional $k$-module and set
	$\mA=\add(s)$.  Then the following statements hold.
	\begin{enumerate}[label=\textup{(\arabic*)},leftmargin=2.2em]
		\item There is a triangle equivalence
		\[
		\mC\simeq\mathrm D^{\mathrm b}(\modu k)/\Sigma^4,
		\]
		and
		\begin{equation}\label{eq:A1-Hom-formula}
			\mathrm{Hom}_{\mC}(\Sigma^is,\Sigma^js)
			\cong
			\begin{cases}
				k,&i\equiv j\pmod4,\\
				0,&i\not\equiv j\pmod4.
			\end{cases}
		\end{equation}
		\item The singleton $\mathcal S=\{s\}$ is a $4$-simple-minded system, and
		$\mA=\langle s\rangle$ is a semisimple proper abelian subcategory satisfying
		$(E_3)$.
		\item The subcategory $\mA$ is not the heart of any $t$-structure on $\mC$.
		\item For $\ell=2$ one has
		\[
		\mD_1=\add(\mA\cup\Sigma\mA),
		\qquad
		\mD_2=\add(\mA\cup\Sigma\mA\cup\Sigma^2\mA),
		\]
		and the highest-layer exchange equality holds:
		\[
		\Sigma^2\mA*\mD_1=\mD_1*\Sigma^2\mA=\mD_2.
		\]
		\item Every object of $\mD_2$ has a unique expression, up to isomorphism,
		\[
		x\cong V_0\oplus\Sigma V_1\oplus\Sigma^2V_2,
		\qquad V_i\in\modu k,
		\]
		and its layer homology is
		\[
		H_i(x)\cong V_i\qquad(0\leq i\leq2).
		\]
		Consequently, the two torsion-free classes $0$ and $\mA$ correspond under the
		top-layer classification to $\mD_1$ and $\mD_2$, respectively.
	\end{enumerate}
	
\end{prop}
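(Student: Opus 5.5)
We treat the five parts in order, and most steps reduce to explicit computations in $\mathrm D^{\mathrm b}(\modu k)$. For (1), note that $\modu k$ is semisimple, hence hereditary, with Serre functor the identity. So the Auslander--Reiten translation on $\mathrm D^{\mathrm b}(\modu k)$ is $\tau=\mathbb S\Sigma^{-1}\cong\Sigma^{-1}$. Therefore $\tau\Sigma^5\cong\Sigma^4$, and the two orbit categories coincide by Keller's construction \cite{Keller2005}; both are triangulated, with the same triangulated hull. The orbit formula gives
\[
\mathrm{Hom}_{\mC}(\Sigma^is,\Sigma^js)=\bigoplus_{n\in\mathbb Z}\mathrm{Hom}_{\mathrm D^{\mathrm b}}(\Sigma^is,\Sigma^{j+4n}s),
\]
and only the summand with $i=j+4n$ is nonzero. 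This is exactly \eqref{eq:A1-Hom-formula}, argued as in the proof of Proposition~\ref{prop:sharp} with $N=4$. Every object is a finite sum of $s,\Sigma s,\Sigma^2s,\Sigma^3s$, each with endomorphism ring $k$, so $\mC$ is Krull--Schmidt.

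For (2), we check the $4$-simple-minded system axioms directly. First, $\mathrm{Hom}(\Sigma^ps,s)=0$ for $1\le p\le 3$. Second, $\mathrm{End}(s)=k$. Third, the generating condition: every object is a sum of $\Sigma^is$ with $0\le i\le 3$, so it lies in $\Sigma^{3}\langle s\rangle*\cdots*\langle s\rangle$. Because $\mathrm{Hom}(s,\Sigma s)=0$, the extension closure $\langle s\rangle$ equals $\add(s)$. This category is equivalent to $\modu k$, so it is semisimple, and it is proper abelian since every short exact sequence splits and corresponds to a split triangle. Finally, $(E_3)$ is the vanishing for $p=1,2,3$ already read off from \eqref{eq:A1-Hom-formula}.

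For (3), suppose $\mA$ were the heart of a $t$-structure. We use the orthogonality of hearts noted in the subsection on hearts, namely $\mathrm{Hom}(\Sigma^p\mathcal H,\mathcal H)=0$ for all $p\ge1$. Taking $p=4$ gives $\mathrm{Hom}(\Sigma^4s,s)\cong\mathrm{End}(s)=k\neq0$, a contradiction.

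For (4) and (5), fix $\ell=2$. Since $\mathrm{Hom}(\Sigma^is,\Sigma^{j+1}s)=0$ whenever $i,j\in\{0,1,2\}$ (differences $\not\equiv0\pmod 4$), every triangle occurring in the iterated extensions splits. Hence $\mD_1=\add(\mA\cup\Sigma\mA)$ and $\mD_2=\add(\mA\cup\Sigma\mA\cup\Sigma^2\mA)$, and both extension products in the exchange equality equal $\mD_2$. Condition (P) of Theorem~\ref{thm:split} holds for $q=2,3$, since $\mathrm{Hom}(s,\Sigma^qs)=0$ in those degrees. Theorem~\ref{thm:split} therefore gives $x\cong\bigoplus\Sigma^iH_i(x)$, and Proposition~\ref{prop 3.7}(1) identifies $H_i(x)\cong V_i$. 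Uniqueness of the expression comes from Krull--Schmidt, because $s,\Sigma s,\Sigma^2 s$ are pairwise nonisomorphic. By Counterexample~\ref{Ctex 7.1}, $\modu k$ has only the torsion-free classes $0$ and $\mA$. Under Theorem~\ref{thm 6.3} these correspond to $\Sigma^2 0*\mD_1=\mD_1$ and $\Sigma^2\mA*\mD_1=\mD_2$.

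The main obstacle is (1): justifying the isomorphism $\tau\cong\Sigma^{-1}$ as an identification of triangle functors, so that the orbit categories agree as triangulated categories rather than merely as additive ones. We would handle this by applying Keller's theorem to the standard autoequivalence $\Sigma^4$, after identifying $\tau\Sigma^5$ with $\Sigma^4$ via the Serre functor.
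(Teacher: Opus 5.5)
Parts (1)--(3) match the paper's argument: the trivial Serre functor gives $\tau\cong\Sigma^{-1}$, then the orbit formula, a direct check of the simple-minded-system axioms using $\langle s\rangle=\add(s)$, and heart orthogonality at $p=4$. The gap is in (4). Your claim that $\mathrm{Hom}_{\mC}(\Sigma^is,\Sigma^{j+1}s)=0$ for all $i,j\in\{0,1,2\}$ is false. The difference $i-j-1$ ranges over $\{-3,\dots,1\}$ and equals $0$ when $i=j+1$; for example, $\mathrm{Hom}_{\mC}(\Sigma^2s,\Sigma^2s)\cong k$.

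This case never arises in the products defining $\mD_1=\Sigma\mA*\mA$ and $\mD_2=\Sigma^2\mA*\mD_1$. Their connecting maps lie in $\mathrm{Hom}_{\mC}(\mA,\Sigma^2\mA)=0$ and $\mathrm{Hom}_{\mC}(\mD_1,\Sigma^3\mA)=0$, so your descriptions of $\mD_1$ and $\mD_2$ are correct. However, it is exactly the case that arises on the other side of the exchange equality. A triangle $d\to x\to\Sigma^2a\xrightarrow{\gamma}\Sigma d$ with $d\in\mD_1$ has $\gamma\in\mathrm{Hom}_{\mC}(\Sigma^2a,\Sigma d)$. For $d=\Sigma b$ this group is $\mathrm{Hom}_{\mC}(\Sigma^2a,\Sigma^2b)\cong\mathrm{Hom}_{\mA}(a,b)$, which is typically nonzero. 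Such triangles need not split: for instance, $\Sigma s\to0\to\Sigma^2s\xrightarrow{1}\Sigma^2s$ exhibits $0\in\mD_1*\Sigma^2\mA$ by a non-split triangle. So ``both extension products equal $\mD_2$'' does not follow from splitting. Since you then apply Theorem~\ref{thm 6.3} under this equality, the final sentence of (5) inherits the gap.

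The repair is the one the paper uses. First, $\mD_2\subseteq\mD_1*\Sigma^2\mA$ by split triangles on the direct sums $V_0\oplus\Sigma V_1\oplus\Sigma^2V_2$. Second, $\mD_1*\Sigma^2\mA\subseteq\mD_2$ because $\mD_2$ is extension closed (Proposition~\ref{prop 3.3}, available since $(E_3)$ holds); this is the inclusion \eqref{eq:automatic-exchange-inclusion}. Alternatively, compute directly. For $d=a_0\oplus\Sigma a_1$, the component of $\gamma$ into $\Sigma a_0$ vanishes, and the other component is $\Sigma^2f$ for some $f\colon a\to a_1$. This gives $x\cong a_0\oplus\Sigma\operatorname{Coker}f\oplus\Sigma^2\ker f\in\mD_2$.

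With that fixed, the rest is sound. In (5), invoking Theorem~\ref{thm:split} is legitimate, since (P) holds for $q=2,3$, but it is redundant. Once $\mD_2=\add(\mA\cup\Sigma\mA\cup\Sigma^2\mA)$ is known, additivity of the $H_i$ and Proposition~\ref{prop 3.7}(1) give $H_i(x)\cong V_i$ directly, which is essentially the paper's argument; this also gives uniqueness of the $V_i$ without appealing to Krull--Schmidt. A cosmetic point in (2): in the convention the paper uses, the generation axiom reads $\mC=\langle s\rangle*\Sigma^{-1}\langle s\rangle*\Sigma^{-2}\langle s\rangle*\Sigma^{-3}\langle s\rangle$, not the ordering you wrote. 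Your split-triangle argument places a direct sum in any ordering, so nothing changes.
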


\begin{proof}
	Since $\modu k$ is semisimple, the Serre functor of
	$\mathrm D^{\mathrm b}(\modu k)$ is the identity and hence
	$\tau=\Sigma^{-1}$.  Therefore $\tau\Sigma^5=\Sigma^4$, giving the first
	equivalence.  The orbit-category formula for morphisms gives
	\[
	\mathrm{Hom}_{\mC}(\Sigma^is,\Sigma^js)
	=\bigoplus_{q\in\mathbb Z}
	\mathrm{Hom}_{\mathrm D^{\mathrm b}(\modu k)}
	(\Sigma^is,\Sigma^{j+4q}s),
	\]
	which is $k$ precisely when $i\equiv j\pmod4$ and is zero otherwise.  This
	proves \eqref{eq:A1-Hom-formula}.
	
	We use the convention of \cite[Definition~1.2]{Jorgensen2022}.  In the
	singleton case, a set $\mathcal S=\{s\}$ is a $4$-simple-minded system if and
	only if
	\begin{align}
		&\mathrm{End}_{\mC}(s)\text{ is a division ring},
		\label{eq:SMS-division}\\
		&\mathrm{Hom}_{\mC}(s,\Sigma^rs)=0
		\qquad(r=-3,-2,-1),
		\label{eq:SMS-negative-orthogonality}\\
		&\mC
		=\langle s\rangle*\Sigma^{-1}\langle s\rangle
		*\Sigma^{-2}\langle s\rangle*\Sigma^{-3}\langle s\rangle,
		\label{eq:SMS-generation}
	\end{align}
	where $\langle s\rangle$ denotes the extension closure of $s$.
	
	\noindent\emph{Division-ring condition.}
	Taking $i=j=0$ in \eqref{eq:A1-Hom-formula} gives
	\[
	\mathrm{End}_{\mC}(s)\cong k.
	\]
	 Since $k$ is a field, this is a division ring, proving
	 \eqref{eq:SMS-division}.  There is no additional pairwise-orthogonality
	 condition to check because $\mathcal S$ has only one element.
	 
	 \smallskip
	 \noindent\emph{Negative orthogonality.}
	 If $r\in\{-3,-2,-1\}$, then $r\not\equiv0\pmod4$.  Formula
	 \eqref{eq:A1-Hom-formula} therefore gives
	 \[
	 \mathrm{Hom}_{\mC}(s,\Sigma^rs)=0.
	 \]
	 This is exactly \eqref{eq:SMS-negative-orthogonality}.  Equivalently, shifting
	 both variables by $-r$ gives
	 \[
	 \mathrm{Hom}_{\mC}(\Sigma^ps,s)=0
	 \qquad(1\leq p\leq3),
	 \]
	 which is the form used in the standing condition $(E_3)$.
	 
	 \smallskip
	 \noindent\emph{Generation.}
	 First observe that $\langle s\rangle=\add(s)$.  Indeed, a triangle defining an
	 extension of two objects in $\add(s)$ has connecting morphism in
	 \[
	 \mathrm{Hom}_{\mC}(\add(s),\Sigma\add(s))=0
	 \]
	 by \eqref{eq:A1-Hom-formula}; hence the triangle splits and its middle term is
	 again in $\add(s)$.  Thus $\add(s)$ is extension closed, while the reverse
	 inclusion $\add(s)\subseteq\langle s\rangle$ is immediate.
	 
	 Every bounded complex of finite-dimensional $k$-vector spaces is isomorphic in
	 $\mathrm D^{\mathrm b}(\modu k)$ to the direct sum of its cohomology objects,
	 placed in their respective degrees.  Hence every object of the orbit category
	 $\mC$ is a finite direct sum of shifts of $s$.  Since $\Sigma^4s\cong s$ in
	 $\mC$, the four objects
	 \[
	 s,\quad\Sigma^{-1}s,\quad\Sigma^{-2}s,\quad\Sigma^{-3}s
	 \]
	 represent all possible shifts.  Consequently, each $x\in\mC$ has an
	 isomorphism
	 \[
	 x\cong a_0\oplus\Sigma^{-1}a_1
	 \oplus\Sigma^{-2}a_2\oplus\Sigma^{-3}a_3,
	 \qquad a_i\in\add(s)=\langle s\rangle.
	 \]
	 Repeated split triangles place this direct sum in
	 \[
	 \langle s\rangle*\Sigma^{-1}\langle s\rangle
	 *\Sigma^{-2}\langle s\rangle*\Sigma^{-3}\langle s\rangle.
	 \]
	 This proves the inclusion from left to right in
	 \eqref{eq:SMS-generation}; the reverse inclusion is trivial because the
	 extension is formed inside $\mC$.  Hence \eqref{eq:SMS-generation} holds,
	 and $\mathcal S=\{s\}$ is a $4$-simple-minded system.
	 
	 Finally, $\mA=\langle s\rangle=\add(s)$ is naturally equivalent to
	 $\modu k$, so it is semisimple abelian.  Every short exact sequence in $\mA$
	 is split and therefore gives a split triangle in $\mC$.  Conversely, a
	 triangle whose three terms lie in $\mA$ is split because its connecting
	 morphism belongs to $\mathrm{Hom}_{\mC}(\mA,\Sigma\mA)=0$.  Thus $\mA$ is proper abelian, and the negative-orthogonality calculation above gives $(E_3)$.
	 
	 The category $\mA$ cannot be a heart.  Indeed, for the heart $\mathcal H$ of
	 any $t$-structure one has
	 \[
	 \mathrm{Hom}_{\mC}(\Sigma^p\mathcal H,\mathcal H)=0
	 \qquad(p\geq1).
	 \]
	 But $\Sigma^4s\cong s$ in $\mC$, so
	 \[
	 \mathrm{Hom}_{\mC}(\Sigma^4s,s)
	 \cong\mathrm{End}_{\mC}(s)\cong k\neq0.
	 \]
	 This is a contradiction.
	 
	 For the window calculation, consider a triangle defining
	 \(\Sigma\mA*\mA\). Its connecting morphism lies in
	 \[
	 \operatorname{Hom}_{\mC}
	 \bigl(\mA,\Sigma^2\mA\bigr),
	 \]
	 which vanishes by \eqref{eq:A1-Hom-formula}. Hence
	 \[
	 \mD_1=\Sigma\mA*\mA=\add(\mA\cup\Sigma\mA).
	 \]
	 Likewise, the connecting morphism of a triangle defining
	 $\Sigma^2\mA*\mD_1$ lies in
	 $\mathrm{Hom}_{\mC}(\mD_1,\Sigma^3\mA)=0$, so
	 \[
	 \mD_2=\Sigma^2\mA*\mD_1
	 =\add(\mA\cup\Sigma\mA\cup\Sigma^2\mA).
	 \]
	 Every object of the latter additive category belongs to
	 $\mD_1*\Sigma^2\mA$ by a split triangle.  Conversely,
	 $\mD_1*\Sigma^2\mA\subseteq\mD_2$ because $\mD_2$ is extension closed.
	 This proves the exchange equality.
	 
	 The Hom formula makes the three displayed layers pairwise orthogonal.  The
	 unique block $s$-torsion decompositions are therefore the obvious projections
	 onto their direct-sum factors, proving $H_i(x)\cong V_i$.  Finally,
	 $\mA\simeq\modu k$ has exactly two torsion-free classes, namely $0$ and
	 $\mA$.  The classification theorem sends them to
	 \[
	 \Sigma^2(0)*\mD_1=\mD_1
	 \quad\text{and}\quad
	 \Sigma^2\mA*\mD_1=\mD_2.
	 \]
\end{proof}

\begin{subsection}{Periodic derived orbit categories}

Let $k$ be a field, let $\mA$ initially denote the category of
finite-dimensional representations of the quiver $1\to 2$, and put
$N=\ell+3$ for $\ell\geq1$. Consider
\[
\mC_N=D^b(\mA)/\Sigma^N,
\]
and identify $\mA$ with its image in $\mC_N$.
Keller's theorem \cite{Keller2005} equips this orbit category with a
triangulated structure for which the projection is a triangle functor.

\begin{prop}
	\label{prop:periodic}
	The following assertions hold.
	\begin{enumerate}
		\item $\mC_N$ is Hom-finite and Krull--Schmidt, and $\mA$ is a
		non-semisimple extension-closed proper abelian subcategory.
		\item $\mA$ satisfies \eqref{eq:global-vanishing} and (P), but does not satisfy
		$E_{\ell+2}$.
		\item $\mA$ is not the heart of a $t$-structure on $\mC_N$.
		\item Theorem~\ref{thm:split} applies to $\mD_\ell$. In particular,
		all intermediate subcategories containing $\mA$ are classified by
		compatible sequences, and the top-layer interval has exactly five
		elements.
	\end{enumerate}
\end{prop}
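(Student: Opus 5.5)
The plan is to do everything with the orbit formula. For $A=kQ$, $Q=1\to 2$, the hereditary category $\mathcal A=\operatorname{mod}A$ has indecomposables $P_2=S_2$, $P_1$ and $S_1$. Morphisms in $\mathcal C_N$ are
\[
\mathrm{Hom}_{\mathcal C_N}(x,y)=\bigoplus_{q\in\mathbb Z}\mathrm{Hom}_{D^b(\mathcal A)}(x,\Sigma^{qN}y),
\]
and since $\mathcal A$ is hereditary, $\mathrm{Hom}_{D^b}(a,\Sigma^m b)=0$ unless $m\in\{0,1\}$. Hence for $a,b\in\mathcal A$,
\[
\mathrm{Hom}_{\mathcal C_N}(\Sigma^p a,b)=\bigoplus_q \mathrm{Hom}_{D^b}(a,\Sigma^{qN-p}b).
\]
This is nonzero only when $qN-p\in\{0,1\}$, that is, $p\equiv 0$ or $p\equiv -1\pmod N$. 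With $N=\ell+3$ and $1\le p\le \ell+1$ neither congruence holds, so $(E_{\ell+1})$ follows. At $p=\ell+2=N-1$ the summand $\mathrm{Ext}^1(S_1,S_2)\neq0$ appears, which breaks $E_{\ell+2}$. Similarly,
\[
\mathrm{Hom}_{\mathcal C_N}(a,\Sigma^q b)=\bigoplus_{q'}\mathrm{Hom}_{D^b}(a,\Sigma^{q+q'N}b),
\]
which vanishes for $2\le q\le \ell+1$, because $q+q'N\in\{0,1\}$ would force $q\equiv 0,1\pmod N$. This gives (P).

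For (1), I will cite Keller's theorem for the triangulated structure. Hom-finiteness is immediate from the formula, since only finitely many summands are nonzero. Every object of $D^b(\mathcal A)$ is a sum of shifted modules, because $\mathcal A$ is hereditary, so every object of $\mathcal C_N$ is a finite sum of $\Sigma^i M$ with $M$ indecomposable and $0\le i<N$. Their endomorphism rings are $\mathrm{End}_{D^b}(M)=k$, as the shifted summands vanish since $N\ge 4$. So $\mathcal C_N$ is Krull--Schmidt.

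For properness of $\mathcal A$, I need that the projection induces
\[
\mathrm{Hom}_{\mathcal C_N}(a,b)=\mathrm{Hom}_{\mathcal A}(a,b),\qquad \mathrm{Hom}_{\mathcal C_N}(a,\Sigma b)=\mathrm{Ext}^1_{\mathcal A}(a,b).
\]
Both hold because the other summands vanish when $N\ge4$. Short exact sequences then correspond to triangles with terms in $\mathcal A$, via the triangle functor $D^b(\mathcal A)\to\mathcal C_N$ and the fact that triangles on a given map are unique up to isomorphism. Extension closure follows by lifting the connecting morphism to $\mathrm{Ext}^1$. Non-semisimplicity comes from the non-split sequence $0\to S_2\to P_1\to S_1\to0$.

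For (3), I use that a heart satisfies $\mathrm{Hom}(\Sigma^p\mathcal H,\mathcal H)=0$ for all $p\ge1$. But $\Sigma^N a\cong a$ gives $\mathrm{Hom}(\Sigma^N S_1,S_1)=k\neq0$, so $\mathcal A$ is not a heart.

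For (4), Theorem~\ref{thm:split} applies directly by (2). The remaining work is counting the top-layer interval, which I expect to be the only genuinely non-formal step. By Corollary~\ref{cor:top} together with Theorem~\ref{thm:split}, since every summand-closed subcategory is layer-stable, the top-layer intermediate subcategories $\mathcal D_{\ell-1}\subseteq\mathcal X\subseteq\mathcal D_\ell$ correspond to torsion-free classes of $\operatorname{mod} kA_2$. These are the torsion-free classes closed under submodules and extensions, and I will enumerate them by listing indecomposable sets:
\[
0,\quad \add S_2,\quad \add S_1,\quad \add(S_2,P_1),\quad \mathcal A.
\]
The sets containing $S_1$ and $P_1$ are excluded: both force $S_2$, since $S_2\subseteq P_1$, and then they are all of $\mathcal A$. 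So there are $5$, the Catalan number for $A_2$. I will double-check the $\{S_1\}$ case, which is legitimate because $S_1$ has no proper nonzero submodule and $\mathrm{Ext}^1(S_1,S_1)=0$.
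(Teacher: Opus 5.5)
Your proposal is correct and follows essentially the same route as the paper: the orbit formula gives $(E_{\ell+1})$, (P), the failure of $E_{\ell+2}$, Hom-finiteness, the Krull--Schmidt property and properness, periodicity $\Sigma^N a\cong a$ rules out a heart, and Theorem~\ref{thm:split} with Corollary~\ref{cor:top} reduces the count to the five torsion-free classes of $\modu kA_2$. The only slip is in your exclusion sentence for that enumeration: you should also say explicitly that $\add(S_1,S_2)$ is ruled out, because extension closure forces $P_1$ via the nonsplit sequence $0\to S_2\to P_1\to S_1\to0$, as the paper states.
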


\begin{proof}
	The category $\mA$ is hereditary. For $a,b\in\mA$ and $t\in\mathbb Z$,
	the orbit formula is
	\begin{equation}\label{eq 8.5}
		\mathrm{Hom}_{\mC_N}(a,\Sigma^t b)
		\cong\bigoplus_{q\in\mathbb Z}
		\mathrm{Hom}_{D^b(\mA)}(a,\Sigma^{t+qN}b).
	\end{equation}
	Only exponents $0$ and $1$ can contribute. In particular, for $t=0,1$
	the only possible contribution has $q=0$, yielding respectively
	$\mathrm{Hom}_{\mA}(a,b)$ and $\mathrm{Ext}^1_{\mA}(a,b)$.
	Every morphism $b\to\Sigma a$ is therefore the image of a short exact
	sequence in $\mA$. Its triangle has middle term in $\mA$, and any other
	triangle with the same boundary is isomorphic to it. This proves
	extension closure and properness.
	
	Every bounded complex over $\mA$ is a sum of shifted cohomology objects.
	There are three indecomposable representations, and shifts can be
	reduced modulo $N$. Formula \eqref{eq 8.5} shows that their shifted images have
	endomorphism ring $k$, and that all Hom spaces are finite-dimensional.
	This proves the first assertion, including the Krull--Schmidt property.
	
	For $1\leq p\leq N-2$, no integer $-p+qN$ equals $0$ or $1$.
	Consequently $E_{N-2}=E_{\ell+1}$ holds. For
	$2\leq t\leq N-2$, no integer $t+qN$ equals $0$ or $1$; this is (P).
	On the other hand,
	\begin{equation}\label{eq 8.6}
		\mathrm{Hom}_{\mC_N}(\Sigma^{N-1}a,b)\cong\mathrm{Ext}^1_{\mA}(a,b)
	\end{equation}
	and the latter is nonzero for suitable $a,b$. Thus
	$E_{N-1}=E_{\ell+2}$ fails.
	For any nonzero $a\in\mA$, periodicity also gives
	$\mathrm{Hom}_{\mC_N}(\Sigma^Na,a)\cong\mathrm{End}_{\mA}(a)\neq 0$.
	The negative orthogonality of a heart excludes $\mA$ from being a heart.

	For completeness, write
	$$
	S_1=(k\to0),\quad S_2=(0\to k),\quad P=(k\xrightarrow{1}k).
	$$
	These are the three indecomposables, and
	$0\to S_2\to P\to S_1\to0$ is nonsplit.
	The torsion-free classes are exactly
\begin{equation}\label{eq 8.7}
	0,\quad \add(S_1),\quad \add(S_2),\quad
	\add(S_2,P),\quad \mA.
\end{equation}
	Indeed, containing $P$ forces containing its subobject $S_2$, while
	containing both simples forces $P$ by extension closure.
	Conversely, the five displayed classes are subobject and extension
	closed. For $\add(S_2,P)$, one can also use its description as the
	representations whose arrow is injective; this property is preserved
	by subobjects and extensions. The other cases are immediate.
	
	Condition (P) and Theorem~\ref{thm:split} prove the complete
	compatible-layer classification. For the top-layer interval,
	Corollary~\ref{cor:top} and \eqref{eq 8.7} give its five elements
	$\Sigma^\ell\mathcal F*\mD_{\ell-1}$.
\end{proof}

\noindent A concrete nonzero connecting map is visible when $\ell=2$
and $N=5$. Rotate the nonsplit short exact sequence above and shift
once to obtain
\[
\Sigma S_1\longrightarrow\Sigma^2 S_2\longrightarrow
\Sigma^2 P\longrightarrow\Sigma^2 S_1.
\]
Its three terms lie in $\mD_2$, and its layer long exact sequence reduces
to
\[
0\longrightarrow S_2\longrightarrow P
\xrightarrow{\delta_2}S_1\longrightarrow0,
\]
where $\delta_2$ is the quotient map up to the fixed sign convention.
Thus the example exhibits both a non-semisimple abelian category and
a nonzero layer boundary. It satisfies $E_3$ but fails $E_4$, so it is
outside the $E_5$ hypothesis of the earlier finite-window theorem.

\begin{rem}
	The splitting in Theorem~\ref{thm:split} does not force the connecting
	morphisms in the long exact sequence to vanish, as the
	calculation above shows.
\end{rem}
\end{subsection}

\begingroup
\renewcommand{\addcontentsline}[3]{}

\section*{Declaration of generative AI and AI-assisted technologies}

During the preparation of this manuscript, the authors used
OpenAI's \mbox{GPT-5.6 Sol} and \mbox{GPT-6 Astra} to assist with language editing and improve the
exposition. The authors independently reviewed
the resulting suggestions and take full responsibility
for the mathematical content and the final manuscript.

\section*{Acknowledgements}

The authors thank Anders Sten Kortegård and Wei Hu
for helpful suggestions. The first author thanks Qinfeng Jiang for help with using OpenAI's Codex.

\vskip 20pt

\vskip 45pt

\bigskip

\noindent
\begin{minipage}{\textwidth}
	\normalfont
	\textbf{Zheng-Gang Wu}\\
	School of Mathematical Sciences, 
	Beijing Normal University\\
	Beijing 100875, China\\
	E-mail: 202631130011@mail.bnu.edu.cn
\end{minipage}

\bigskip

\noindent
\begin{minipage}{\textwidth}
	\normalfont
	\textbf{Xianhui Fu}\\
	School of Mathematics and Statistics,
	Northeast Normal University\\
	Changchun 130024, China\\
	E-mail: fuxianhui@gmail.com
\end{minipage}

\endgroup

\end{document}